\documentclass[a4paper,10pt]{article}

\usepackage[
  left=1.5cm,
  right=1.5cm,
  top=2cm,
  bottom=2cm
]{geometry}
\usepackage{amsmath, amssymb, amsthm}
\usepackage{mathtools}
\usepackage{graphicx}
\usepackage{caption}
\usepackage{subcaption}
\usepackage{enumitem}
\usepackage{tikz}
\usepackage{tikz-cd}
\usepackage[all]{xy}
\usepackage[dvipsnames]{xcolor}
\usepackage{longtable}
\usepackage{booktabs}
\usepackage{multirow}
\usepackage[figuresright]{rotating}
\usepackage{thmtools}
\usepackage{thm-restate}
\usepackage{hyperref}
\usepackage[nameinlink]{cleveref}

\hypersetup{hypertexnames=false}

\newcommand{\keywords}[1]{\par\noindent\textbf{Keywords:} #1}

\newtheorem{theorem}{Theorem}[section]
\newtheorem{definition}[theorem]{Definition}
\newtheorem{lemma}[theorem]{Lemma}
\newtheorem{proposition}[theorem]{Proposition}
\newtheorem{corollary}[theorem]{Corollary}
\newtheorem{remark}[theorem]{Remark}
\newtheorem{example}[theorem]{Example}

\crefname{theorem}{theorem}{theorems}
\crefname{lemma}{lemma}{lemmas}
\crefname{proposition}{proposition}{propositions}
\crefname{corollary}{corollary}{corollaries}
\crefname{definition}{definition}{definitions}
\crefname{example}{example}{examples}

\newcommand{\im}{\operatorname{Im}}
\newcommand{\bba}{\mathbb{A}}
\newcommand{\bbd}{\mathbb{D}}
\newcommand{\bbf}{\mathbb{F}}
\newcommand{\bbr}{\mathbb{R}}
\newcommand{\bbz}{\mathbb{Z}}
\newcommand{\calc}{\mathcal{C}}

\newcommand{\calfm}{\mathcal{F\!M}}
\newcommand{\calfp}{\mathcal{FP}}
\newcommand{\calfc}{\mathcal{FC}}
\newcommand{\calp}{\mathcal{P}}

\DeclareMathOperator{\fdl}{FDL}
\DeclareMathOperator{\rank}{Rank}
\DeclareMathOperator{\cut}{Cut}
\DeclareMathOperator{\sub}{Sub}
\DeclareMathOperator{\solv}{Solv}

\newcommand{\SpCpx}{\textrm{SpCpx}}
\newcommand{\E}{E^\Delta}
\newcommand{\mdh}{M^{\Delta,H}}
\newcommand{\mhd}{M^{H,\Delta}}

\DeclareMathOperator{\Mo}{M}
\DeclareMathOperator{\No}{N}
\DeclareMathOperator{\Co}{C}
\DeclareMathOperator{\Zo}{Z}
\DeclareMathOperator{\Bo}{B}
\DeclareMathOperator{\Ho}{H}

\definecolor{myblue}{RGB}{100, 200, 200}
\definecolor{myred}{RGB}{150, 0, 0}

\begin{document}

\title{L-fuzzy simplicial homology}

\author{Javier Perera-Lago\(^1\) \and Alvaro Torras-Casas\(^1\) \and Rocio Gonzalez-Diaz\(^1\)}
\date{\(^1\)Universidad de Sevilla}

\maketitle

\begin{abstract}
Simplicial homology is a classical tool that assigns a sequence of modules to a simplicial complex, providing invariants for the study of its topological properties. In this article, we introduce the notion of \(L\)-fuzzy simplicial homology, a generalization of simplicial homology for \(L\)-fuzzy subcomplexes, in which each simplex is assigned a value from a completely distributive lattice \(L\). We present its definition and main properties and describe methods to compute its structure. In addition, we interpret filtrations over a poset and chromatic datasets in this setting, opening a door to further applications in topological data analysis.
\end{abstract}

\keywords{\(L\)-fuzzy, Simplicial homology, Topological Data Analysis, Chromatic dataset}


\section{Introduction}
\label{sec:introduction}

\emph{Topological Data Analysis} (TDA) is a research field that complements traditional data analysis by applying theory and algorithms from computational topology. A significant part of TDA is devoted to the search for invariant descriptors of datasets (represented as point clouds in Euclidean space \(\bbr^n\)). By constructing a simplicial complex from a dataset, one can compute its homology modules \(\Ho_d\) (for \(d\geq 0\)) which capture information about connected components, cycles, cavities, and higher-dimensional features. 
Simplicial complexes are also central in TDA for dimensionality reduction and visualization. 
In particular, UMAP \cite{mcinnes2018umap} and IsUMap \cite{barth2025fuzzy} leverage the concept of \emph{fuzzy simplicial sets} to compute embeddings in \(\bbr^2\) or \(\bbr^3\).
Motivated by these developments, we address the following question: if fuzzy simplicial sets can be used for data visualization, can they also be used for data description? More precisely, is it possible to define and compute a meaningful notion of homology for a fuzzy simplicial set?

A review of the literature shows that existing studies on fuzzy simplicial sets mainly focus on dimensionality reduction \cite{spivak2009metric,mcinnes2018umap,barth2025fuzzy,keck2025probabilistic,reyes2025assessing}, often from a categorical perspective. Other works such as \cite{wang1985singular} compute singular homology for fuzzy topological spaces. However, the resulting invariants are abelian groups that are difficult to compute in practice. 
A different research line considers weighted simplicial complexes, where simplices have numerical weights encoding their relevance within the dataset \cite{dawson1990homology,ren2018weighted}. Although this approach enriches the homology information with the weights, the result is still an abelian group or module.
In contrast, the goal of this paper is to develop a homology theory intrinsically adapted to fuzzy simplicial complexes, yielding fuzzy invariants, better suited to represent the inherent imprecision of the dataset.

Besides, in this paper, we focus on fuzzy simplicial complexes rather than fuzzy simplicial sets 
to develop a framework that remains closer to the classical combinatorial setting. Furthermore, by replacing the classical interval \([0,1]\) of fuzzy membership values with a completely distributive lattice \((L,\leq)\), we extend our approach to \(L\)-fuzzy subcomplexes. Thus, we propose a method to define and effectively compute \(L\)-fuzzy simplicial homology, 
that is, a family of \(L\)-fuzzy submodules \(\eta_d\) (for \(d\geq 0\)) that extends classical homology modules while preserving the underlying fuzzy information of the dataset. 
This framework allows us to encompass a broader range of real-world applications, including filtrations over a poset and \emph{chromatic datasets}, whose points are endowed with a ``color'' indicating a class or category. The analysis of chromatic datasets has recently been considered in TDA \cite{di2024chromatic,natarajan2024morse}, but we revisit the problem through the lens of \(L\)-fuzzy simplicial homology.

The paper is organized as follows. In \Cref{sec:cdl}, we introduce  completely distributive lattices. In \Cref{sec:lfuzzysubsets}, we present the main definitions and results concerning \(L\)-fuzzy subsets, where \((L,\leq)\) is a completely distributive lattice.  In \Cref{sec:lfuzzysubmodules}, we define \(L\)-fuzzy submodules and establish several properties that will be needed later. In \Cref{sec:lfuzzysubcomplexes}, we introduce \(L\)-fuzzy subcomplexes and discuss how they can be used to model relevant structures arising in applications. In \Cref{sec:fuzhom}, we begin by reviewing  simplicial homology and then combine the concepts developed in the previous sections to define \(L\)-fuzzy simplicial homology and study its fundamental properties. In \Cref{sec:computation} we present a method for the computation of \(L\)-fuzzy simplicial homology, valid for any choice of completely distributive lattice \(L\) and any coefficient ring. In \Cref{sec:example}, we illustrate the theory with a detailed example for a chromatic dataset, including all computations required to determine both the standard and the \(L\)-fuzzy simplicial homology. Finally, \Cref{sec:conclusions} summarizes the main results and outlines directions for future work.
The main notations used in the paper can be consulted in~\ref{sec:listofnotations}.

\section{Completely distributive lattices}
\label{sec:cdl}

In this section we follow \cite{davey2002introduction} to introduce completely distributive lattices, which are partially ordered sets (posets) endowed with additional structure. These lattices serve as the sets of membership values for \(L\)-fuzzy sets.

Let \((P,\leq)\) be a poset. If \(p_1 \leq p_2\) and \(p_1 \neq p_2\), we write \(p_1 < p_2\). Two elements \(p_1, p_2 \in P\) are said to be \emph{comparable} if \(p_1 \leq p_2\) or \(p_2 \leq p_1\), and \emph{incomparable} otherwise. If every pair of elements in \(P\) is comparable, then \(\leq\) is called a \emph{total order}. Given \(p \in P\) and a subset \(S \subseteq P\), we write \(S \leq p\) (respectively \(p \leq S\)) if \(s \leq p\) for every \(s \in S\) (respectively \(p \leq s\) for every \(s \in S\)). 

\begin{definition}[Completely distributive lattice]
A poset \((L,\leq)\) is called a \emph{completely distributive lattice} (CDL) if the following conditions hold:
\begin{itemize}
\item For every subset \(S \subseteq L\), there exists a unique element \(\bigvee S \in L\), called the \emph{join of \(S\)}, such that \(S \leq \bigvee S\), and for any \(\ell \in L\) with \(S \leq \ell\), one has \(\bigvee S \leq \ell\). If \(S = \{\ell_1,\dots,\ell_n\}\) is finite, \(\bigvee\! S\) can also be written as \(\ell_1 \vee \cdots \vee \ell_n\).
\item For every subset \(S \subseteq L\), there exists a unique element \(\bigwedge S \in L\), called the \emph{meet of \(S\)}, such that \(\bigwedge S \leq S\), and for any \(\ell \in L\) with \(\ell \leq S\), one has \(\ell \leq \bigwedge S\). If \(S = \{\ell_1,\dots,\ell_n\}\) is finite, \(\bigwedge\! S\) can also be written as \(\ell_1 \wedge \cdots \wedge \ell_n\).
\item For every doubly indexed family \(\{\ell_{ij} \in L \mid i \in I, j \in J\}\), arbitrary meets and joins distribute, that is,
\[
\bigwedge_{i \in I} \bigvee_{j \in J} \ell_{ij}
=
\bigvee_{f \in J^I} \bigwedge_{i \in I} \ell_{i f(i)}
\quad \text{and} \quad
\bigvee_{i \in I} \bigwedge_{j \in J} \ell_{ij}
=
\bigwedge_{f \in J^I} \bigvee_{i \in I} \ell_{i f(i)}.
\]
\end{itemize}
\end{definition}

Since a CDL \((L,\leq)\) admits joins and meets of arbitrary subsets, the elements \(\bigvee \emptyset\) and \(\bigwedge \emptyset\) exist. Any element \(\ell \in L\) trivially satisfies the conditions \(\emptyset \leq \ell\) and \(\ell \leq \emptyset\). Therefore, \(\bigvee \emptyset\) is the least element of \(L\), denoted \(0\), and \(\bigwedge \emptyset\) is the greatest element, denoted \(1\). Hence every CDL is bounded.

\begin{theorem}[\cite{davey2002introduction}]
\label{thm:lattices_laws}
Let \((L,\leq)\) be a CDL and let \(\ell_1,\ell_2,\ell_3 \in L\). Then, the following properties hold:
\begin{itemize}
\item (Connecting lemma) \(\ell_1 \leq \ell_2\) if and only if \(\ell_1 \vee \ell_2 = \ell_2\), equivalently if \(\ell_1 \wedge \ell_2 = \ell_1\).
\item (Idempotency) \(\ell_1 \vee \ell_1 = \ell_1\) and \(\ell_1 \wedge \ell_1 = \ell_1\).
\item (Commutativity) \(\ell_1 \vee \ell_2 = \ell_2 \vee \ell_1\) and \(\ell_1 \wedge \ell_2 = \ell_2 \wedge \ell_1\).
\item (Associativity) \((\ell_1 \vee \ell_2) \vee \ell_3 = \ell_1 \vee (\ell_2 \vee \ell_3)\) and \((\ell_1 \wedge \ell_2) \wedge \ell_3 = \ell_1 \wedge (\ell_2 \wedge \ell_3)\).
\item (Absorption) \(\ell_1 \vee (\ell_1 \wedge \ell_2) = \ell_1\) and \(\ell_1 \wedge (\ell_1 \vee \ell_2) = \ell_1\).
\end{itemize}
\end{theorem}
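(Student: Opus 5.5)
All five properties follow directly from the universal properties of join and meet in the definition of a CDL; complete distributivity is never used. The common tool is uniqueness of the join: if an element \(m\) satisfies \(S \leq m\) and \(m \leq \ell\) for every upper bound \(\ell\) of \(S\), then \(m = \bigvee S\). The dual statement holds for meets. Each identity reduces to checking that one side is an upper (or lower) bound and is below every other upper (or lower) bound.

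We prove the connecting lemma first, since the remaining laws can then be routed through it.
\begin{itemize}
\item Suppose \(\ell_1 \leq \ell_2\). Then \(\ell_2\) is an upper bound of \(\{\ell_1,\ell_2\}\), and every upper bound \(\ell\) satisfies \(\ell_2 \leq \ell\). By uniqueness, \(\ell_1 \vee \ell_2 = \ell_2\).
\item Conversely, if \(\ell_1 \vee \ell_2 = \ell_2\), then \(\ell_1 \leq \ell_1 \vee \ell_2 = \ell_2\).
\item The meet version, \(\ell_1 \leq \ell_2\) if and only if \(\ell_1 \wedge \ell_2 = \ell_1\), is proved dually.
\item Chaining the two characterizations gives the stated equivalence.
\end{itemize}

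Idempotency is the connecting lemma applied with \(\ell_1 \leq \ell_1\). Commutativity is immediate, because \(\ell_1 \vee \ell_2\) is by definition \(\bigvee\{\ell_1,\ell_2\}\), and the sets \(\{\ell_1,\ell_2\}\) and \(\{\ell_2,\ell_1\}\) coincide; the same holds for meets.

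For associativity, we show that both sides equal \(\bigvee\{\ell_1,\ell_2,\ell_3\}\).
\begin{itemize}
\item \((\ell_1 \vee \ell_2) \vee \ell_3\) is an upper bound of each \(\ell_i\), using transitivity through \(\ell_1 \vee \ell_2\).
\item If \(\ell\) bounds all three elements, then \(\ell\) bounds \(\{\ell_1,\ell_2\}\), so \(\ell_1 \vee \ell_2 \leq \ell\). Hence \(\ell\) bounds \(\{\ell_1 \vee \ell_2, \ell_3\}\), so \((\ell_1 \vee \ell_2) \vee \ell_3 \leq \ell\).
\item By uniqueness, \((\ell_1 \vee \ell_2) \vee \ell_3 = \bigvee\{\ell_1,\ell_2,\ell_3\}\). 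The same argument handles the other bracketing, and the meet version is dual.
\end{itemize}

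For absorption, note that \(\ell_1 \wedge \ell_2 \leq \ell_1\) by definition of the meet. The connecting lemma then gives \((\ell_1 \wedge \ell_2) \vee \ell_1 = \ell_1\), and commutativity turns this into \(\ell_1 \vee (\ell_1 \wedge \ell_2) = \ell_1\). Dually, \(\ell_1 \leq \ell_1 \vee \ell_2\) yields \(\ell_1 \wedge (\ell_1 \vee \ell_2) = \ell_1\).

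There is no real obstacle in this argument. The only care needed is to use the uniqueness clause correctly when identifying elements, and in associativity to justify \(\ell_1 \vee \ell_2 \leq \ell\) before bounding the outer join.
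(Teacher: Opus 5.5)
Your proof is correct. The paper gives no proof of its own and defers to Davey and Priestley, and your argument is essentially the standard textbook one found there: the Connecting Lemma from the universal property and uniqueness of \(\bigvee\) and \(\bigwedge\), idempotency and absorption as consequences of it, commutativity from \(\{\ell_1,\ell_2\}=\{\ell_2,\ell_1\}\), and associativity by identifying both bracketings with \(\bigvee\{\ell_1,\ell_2,\ell_3\}\). You are also right that complete distributivity is never used, since these laws hold in any lattice.
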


\begin{example}
\label{exa:totally}
Every bounded totally ordered set, such as \(([0,1],\leq)\), is a CDL where \(\bigwedge S = \inf S\) and \(\bigvee S = \sup S\).
\end{example}

\begin{definition}[Free distributive lattice]
\label{def:fdl}
Let \(S=\{x_1,\ldots,x_n\}\) be a set with no order relations imposed. Consider the set \(M=\{\bigwedge T \mid T \subseteq S\}\) of all formal finite meets of elements of \(S\), including \(\bigwedge \emptyset = 1\). Define also the set \(L=\{\bigvee T \mid T \subseteq M\}\) of all formal joins of elements of \(M\), including \(\bigvee \emptyset = 0\). We induce an order relation \(\leq\) on \(L\) by declaring \(\ell_1 \leq \ell_2\) if and only if the identity \(\ell_1 \vee \ell_2 = \ell_2\) can be derived using the laws from \Cref{thm:lattices_laws} together with distributivity. The resulting poset \((L,\leq)\) is the \emph{free distributive lattice} \(\fdl(x_1,\ldots,x_n)\), which is a CDL by construction.
\end{definition}

When a poset \((P,\leq)\), and in particular a CDL, is finite, it can be represented by its \emph{Hasse diagram}. In such a diagram, we draw a point for each \(p \in P\) and an arrow from \(a\) to \(b\) whenever \(b\) \emph{covers} \(a\), that is, when \(a < b\) and there is no element \(c \in P\) such that \(a < c < b\). The elements are arranged so that the order increases in a fixed direction, typically upwards (or sometimes from left to right). For example, \Cref{fig:f2lattice} shows the Hasse diagram of \(\fdl(x,y)\).

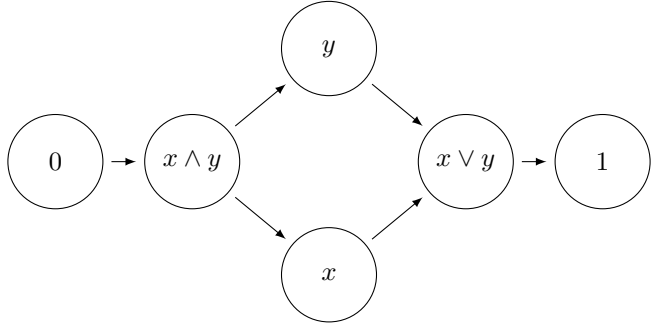
\begin{figure}
\centering
\begin{tikzpicture}[
  scale=1,
  every node/.style={
    circle,
    draw=black,
    fill=white,
    inner sep=0pt,
    minimum size=1.25cm,
    align=center
  },
  >=latex,
  shorten <=3pt, shorten >=3pt
]

\node (0) at (3.6,-7.5) {\(0\)};
\node (xy) at (5.4,-7.5) {\(x \wedge y\)};
\node (x) at (7.2,-9) {\(x\)};
\node (y) at (7.2,-6) {\(y\)};
\node (xvy) at (9,-7.5) {\(x \vee y\)};
\node (1) at (10.8,-7.5) {\(1\)};

\draw[->] (0) -- (xy);
\draw[->] (xy) -- (x);
\draw[->] (xy) -- (y);
\draw[->] (x) -- (xvy);
\draw[->] (y) -- (xvy);
\draw[->] (xvy) -- (1);
\end{tikzpicture}
\caption{Hasse diagram of \(\fdl(x,y)\), with increasing order from left to right.}
\label{fig:f2lattice}
\end{figure}

\begin{definition}[Meet-prime element]
Let \((L,\leq)\) be a CDL and \(\ell \in L\). We say that \(\ell\) is \emph{meet-prime} in \(L\) if \(a \wedge b \leq \ell\) implies \(a \leq \ell\) or \(b\leq \ell\). 
\end{definition}

\begin{definition}[Filter subsets]
\label{def:Lfilter}
Let \((P,\leq)\) be a poset and \(p \in P\). We define the filter subsets
\[
P^{\geq p} \! = \! \{a \in P \mid a \geq p\}, \;
P^{\leq p} \! = \! \{a \in P \mid a \leq p\}, \;
P^{> p} \! = \! \{a \in P \mid a > p\}, \;
P^{< p} \! = \! \{a \in P \mid a < p\}, \;
P^{= p} \! = \! \{a \in P \mid a = p\},
\]
and similarly the corresponding complements \(P^{\not\geq p}\), \(P^{\not\leq p}\), \(P^{\not> p}\), \(P^{\not< p}\), and \(P^{\neq p}\).
\end{definition} 

\section{Sets and L-fuzzy subsets}
\label{sec:lfuzzysubsets}

Let \(X\) be a non-empty set. The \emph{power set} of \(X\), denoted \(\calp(X)\), is the set of all subsets of \(X\).  
Each subset \(S \in \calp(X)\) can be identified with its characteristic function \(I_S : X \to \{0,1\}\), defined by \(I_S(x)=1\) if \(x \in S\) and \(I_S(x)=0\) otherwise. Replacing the Boolean lattice \((\{0,1\},\leq)\) with a general CDL \((L,\leq)\) leads to the following definition.

\begin{definition}[\(L\)-fuzzy subset]
\label{def:Lfuzzysubset}
Let \(X\) be a non-empty set and \((L, \leq)\) a CDL. An \emph{\(L\)-fuzzy subset} of \(X\) is a map \(\mu : X \to L\).  
The set of all \(L\)-fuzzy subsets of \(X\) is called the \emph{\(L\)-fuzzy power set} of \(X\), and is denoted \(\calfp(X, L)\).
\end{definition}

The classical subsets of \(X\) correspond to the particular case \(L=\{0,1\}\). In this situation, \(\calfp(X,\{0,1\})\) can be naturally identified with the power set \(\calp(X)\). For this reason, elements of \(\calp(X)\) are called \emph{crisp subsets} of \(X\), in contrast to the more general \(L\)-fuzzy subsets in \(\calfp(X,L)\).

For each \(x \in X\), the value \(\mu(x) \in L\) is called the \emph{membership value} of \(x\) with respect to \(\mu\). The interpretation is the following: if \(\mu(x)=0\), then \(x\) does not belong to \(\mu\); if \(\mu(x)=1\), then \(x\) fully belongs to \(\mu\); and if \(0 < \mu(x) < 1\), then \(x\) belongs to \(\mu\) to an intermediate degree. The image or
\emph{set of values of \(\mu\)} is \(L(\mu)=\{\mu(x)\mid x \in X\} \subseteq L\).

\begin{example}
\label{exa:zadehfuzzy}
Let \(([0,1], \leq)\) be the CDL of real numbers in the unit interval. The fuzzy sets introduced by Zadeh in~\cite{zadeh1965fuzzy} are precisely the elements of \(\calfp(X, [0,1])\).
\end{example}

We now extend the basic notions in set theory to the \(L\)-fuzzy setting.

\begin{definition}[Operations on \(L\)-fuzzy subsets]
\label{def:fuzzy_operations}
Let \(\mu, \nu \in \calfp(X,L)\), \(\psi \in \calfp(Y,L)\) and \(f:X \to Y\). Then,
\begin{itemize}
\item (Inclusion) \(\mu \subseteq \nu\) if and only if \(\mu(x) \leq \nu(x)\) for all \(x \in X\).
\item (Union) \(\mu \cup \nu \in \calfp(X,L)\) is defined as \((\mu \cup \nu)(x) = \mu(x) \vee \nu(x)\) for any \(x \in X\).
\item (Intersection) \(\mu \cap \nu \in \calfp(X,L)\) is defined as \((\mu \cap \nu)(x) = \mu(x) \wedge \nu(x)\) for all \(x \in X\).
\item (Cartesian product) \(\mu \times \psi \in \calfp(X \times Y,L)\) is defined as \((\mu \times \psi)(x,y) = \mu(x) \wedge \psi(y)\) for all \((x,y) \in X \times Y\).
\item (Image) \(f(\mu) \in \calfp(Y,L)\) is defined as \(f(\mu)(y) = \bigvee \{\mu(x) \mid f(x) = y\}\) for all \(y \in Y\).
\item (Preimage) \(f^{-1}(\psi) \in \calfp(X,L)\) is defined as \(f^{-1}(\psi)(x) = \psi(f(x))\) for all \(x \in X\).
\end{itemize}
\end{definition}

We now present some \(L\)-fuzzy subsets of interest for this paper.

\begin{definition}
\label{def:fuzzysingleton}
Let \(S \subseteq X\) and a non-zero value \(\ell \in L\). The \(L\)-fuzzy subset \(S_{\ell} \in \calfp(X, L)\) is defined by \(S_{\ell}(x) = \ell\) if \(x \in S\) and \(S_{\ell}(x)=0\) otherwise. 
If \(S\) is a singleton \(\{a\}\), we say that \(\{a\}_{\ell}\) is an \emph{\(L\)-fuzzy singleton}.
\end{definition}

\begin{example}
\label{exa:fdlfuzzy}
A \emph{chromatic dataset} is a pair \(\mathcal{D} = (X,f)\), where  
\(X = \{x_i \in \bbr^d \mid i = 1,\dots,n\}\) is a finite dataset and  
\(f : X \to C = \{c_1,\dots,c_k\}\) is a labeling function assigning a class label or ``color'' to each data instance. Let \(\fdl(c_1,\ldots,c_k) = (L,\leq)\) be the free distributive lattice generated by \(C\). Since \(C \subseteq L\), the labeling function can be regarded as a map \(f : X \to L\), and therefore the chromatic dataset \(\mathcal{D} = (X,f)\) can be interpreted as an \(L\)-fuzzy subset \(f \in \calfp(X,L)\), where each data instance is assigned the lattice element corresponding to its color.
\end{example}

In \Cref{def:Lfilter} we saw how to filter the elements of a lattice according to their relation to a certain element. Now, we extend the notion of filtering to \(L\)-fuzzy subsets.

\begin{definition}[Filter subsets in an \(L\)-fuzzy subset]
\label{def:mufilter}
Let \(\mu \in \calfp(X,L)\) and let \(\ell \in L\). We define the following crisp subsets of \(X\), known as the filter subsets of \(\mu\), by
\[
\mu^{\geq \ell} = \mu^{-1}(L^{\geq \ell}) = \{x \in X \mid \mu(x) \geq \ell\}, \quad
\mu^{\leq \ell} = \mu^{-1}(L^{\leq \ell}), \quad
\mu^{> \ell} = \mu^{-1}(L^{> \ell}), \quad
\mu^{< \ell} = \mu^{-1}(L^{< \ell}), \quad
\mu^{= \ell} = \mu^{-1}(L^{= \ell}),
\]
and similarly for the complementary subsets 
\(\mu^{\not\geq \ell}\), \(\mu^{\not\leq \ell}\), \(\mu^{\not> \ell}\), \(\mu^{\not< \ell}\), and \(\mu^{\neq \ell}\). Two notable filter subsets of \(X\) are the \emph{support of \(\mu\)}, defined as \(\mu^* = \mu^{>0}=\mu^{\neq 0}\), and the \emph{core of \(\mu\)}, defined as \(\mu_* = \mu^{=1} = \mu^{\geq 1}\).
\end{definition}

Among these subsets, the filters \(\mu^{\geq \ell}\), also known as \emph{cuts}, play a central role. In the classical case of fuzzy sets valued in \(([0,1],\leq)\), the set \(\mu^{\geq \alpha}\) for \(\alpha \in [0,1]\) is known as the \(\alpha\)-cut of \(\mu\). More generally, the family \(\{\mu^{\geq \ell} \mid \ell \in L\} \subset \calp(X)\) forms a system of crisp subsets of \(X\) that encodes the entire structure of \(\mu\). Indeed, the membership function \(\mu\) can be recovered from it by \(\mu(x) = \bigvee \{\ell \in L \mid x \in \mu^{\geq \ell}\}\). Thus, instead of studying the \(L\)-fuzzy subset \(\mu\) directly, one may equivalently study the family of its cuts. This perspective allows us to provide a bridge between \(L\)-fuzzy and crisp objects. We now present some basic properties of cuts.

\begin{proposition}
\label{prop:subsetsproperties}
Let \(X\) be a non-empty set and \((L,\leq)\) a CDL.
\begin{enumerate}
\item If \(\mu \in \calfp(X,L)\), for any \(\ell_1,\ell_2 \in L\) such that \(\ell_1 \leq \ell_2\), we have \(\mu^{\geq \ell_1} \supseteq \mu^{\geq \ell_2}\).
\item If \(\mu \in \calfp(X,L)\), for any subset \(S \subseteq L\) we have \(\mu^{\geq \bigvee S} = \bigcap_{\ell \in S}\mu^{\geq \ell}\).
\item If \(\mu \in \calfp(X,L)\), for any subset \(S \subseteq L\) we have \(\mu^{\geq \bigwedge S} \supseteq \bigcup_{\ell \in S}\mu^{\geq \ell}\).
\end{enumerate}   
\end{proposition}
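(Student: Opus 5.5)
All three items follow by unwinding \Cref{def:mufilter}, so that \(x \in \mu^{\geq \ell}\) means exactly \(\mu(x) \geq \ell\). Each claim then reduces to an order-theoretic fact about joins and meets in \((L,\leq)\), checked pointwise for \(x \in X\). No distributivity is needed, only the defining properties of \(\bigvee\) and \(\bigwedge\) and transitivity of \(\leq\).

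For item 1, I take \(x \in \mu^{\geq \ell_2}\), so that \(\mu(x) \geq \ell_2\). Since \(\ell_2 \geq \ell_1\), transitivity gives \(\mu(x) \geq \ell_1\), hence \(x \in \mu^{\geq \ell_1}\).

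For item 2, I prove both inclusions.
\begin{itemize}
\item Let \(x \in \mu^{\geq \bigvee S}\). For every \(\ell \in S\) we have \(\ell \leq \bigvee S \leq \mu(x)\), so \(x \in \mu^{\geq \ell}\); equivalently, this is item 1 applied to \(\ell \leq \bigvee S\).
\item Let \(x \in \bigcap_{\ell \in S}\mu^{\geq \ell}\). Then \(S \leq \mu(x)\), so \(\mu(x)\) is an upper bound of \(S\). The least-upper-bound property gives \(\bigvee S \leq \mu(x)\).
\end{itemize}
The only point needing a remark is \(S=\emptyset\). Then \(\bigvee \emptyset = 0\) and the empty intersection is taken to be \(X\), and \(\mu^{\geq 0}=X\), so the identity still holds.

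For item 3, let \(x \in \mu^{\geq \ell}\) for some \(\ell \in S\). Then \(\bigwedge S \leq \ell \leq \mu(x)\), which is item 1 applied to \(\bigwedge S \leq \ell\). For \(S=\emptyset\) the union is empty, so the inclusion is trivial.

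I would also note why item 3 is only an inclusion. The reverse would require \(\mu(x) \geq \bigwedge S\) to force \(\mu(x) \geq \ell\) for some \(\ell \in S\), and this fails when the elements of \(S\) are incomparable. For example, in \(\fdl(x,y)\) take \(S=\{x,y\}\) and \(\mu(a)=x\wedge y\). I expect no real obstacle here; the only care needed is the empty-set conventions in items 2 and 3.
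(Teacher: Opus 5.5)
Your proof is correct and follows the paper's argument essentially step for step. You unwind \(x \in \mu^{\geq \ell}\) as \(\mu(x) \geq \ell\), use transitivity for item 1, use the upper-bound and least-upper-bound properties of \(\bigvee S\) for the two inclusions of item 2, and use the lower-bound property of \(\bigwedge S\) for item 3. Your remarks on the \(S=\emptyset\) conventions and the \(\fdl(x,y)\) example showing that item 3 can be strict are also correct; the paper does not spell these out.
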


\begin{proof}
We prove each statement in turn.
\begin{enumerate}
\item Let \(\ell_1 \leq \ell_2\) and suppose \(x \in \mu^{\geq \ell_2}\). Then \(\mu(x) \geq \ell_2 \geq \ell_1\), so \(x \in \mu^{\geq \ell_1}\). Therefore \(\mu^{\geq \ell_1} \supseteq \mu^{\geq \ell_2}\).
\item We prove both inclusions. If \(x \in \mu^{\geq \bigvee S}\), then \(\mu(x)\geq \bigvee S\), which implies \(\mu(x) \geq \ell\) for all \(\ell \in S\). Thus \(x \in \bigcap_{\ell \in S}\mu^{\geq \ell}\). Conversely, if \(x \in \bigcap_{\ell \in S}\mu^{\geq \ell}\), then \(\mu(x) \geq \ell\) for all \(\ell \in S\). By definition of the join, this implies \(\mu(x) \geq \bigvee S\), so \(x \in \mu^{\geq \bigvee S}\). Therefore \(\mu^{\geq \bigvee S} = \bigcap_{\ell \in S}\mu^{\geq \ell}\).
\item Let \(x \in \bigcup_{\ell \in S}\mu^{\geq \ell}\). There exists \(\ell \in S\) such that \(x \in \mu^{\geq \ell}\). Then, \(\mu(x) \geq \ell\geq \bigwedge S\) and hence \(x \in \mu^{\geq \bigwedge S}\). Thus \(\mu^{\geq \bigwedge S} \supseteq \bigcup_{\ell \in S}\mu^{\geq \ell}\). \qedhere
\end{enumerate}
\end{proof}

The first item of \Cref{prop:subsetsproperties} shows that there exists a contravariant functor \(\cut(\mu) : L \to \calp(X)\), given by \(\ell \mapsto \mu^{\ge \ell}\), from the CDL \((L,\le)\) (viewed as a category with one arrow \(\ell_1 \to \ell_2\) whenever \(\ell_1 \le \ell_2\)) to the category \((\mathcal{P}(X),\subseteq)\) of subsets of \(X\) (with morphisms restricted to inclusions). 
The second item shows that \(\cut(\mu)\) sends joins in the lattice \(L\) to meets (intersections) in \(\mathcal{P}(X)\). However, the dual statement need not hold in general; meets in \(L\) do not necessarily correspond to joins (unions) in \(\mathcal{P}(X)\).

\section{Modules and L-fuzzy submodules}
\label{sec:lfuzzysubmodules}

The definition of simplicial homology requires the notion of a module over a ring. 
Throughout this paper, we assume that \((\bba,+,\cdot)\) is a commutative ring with neutral element \(0 \in \bba\) for addition and neutral element \(1 \in \bba\) for multiplication (not to be confused with the elements \(0,1 \in L\)). A class of rings that plays a central role in our work is that of \emph{principal ideal domains} (PIDs) \(\bbd\), in which every ideal \(I \subseteq \bbd\) is principal, that is, there exists \(a \in \bbd\) such that \(I = (a)\). 
Examples of PIDs include the integer ring \(\bbz\) and the quotient rings \(\bbz/(n)\) for \(n \in \bbz\). A particularly important subclass of PIDs is that of \emph{fields} \(\bbf\), in which every non-zero element \(a \in \bbf\) is a unit, that is, there exists \(a^{-1} \in \bbf\) such that \(aa^{-1} = a^{-1}a = 1\). Typical examples of fields include the rational numbers \(\mathbb{Q}\), the real numbers \(\bbr\), and the finite fields \(\bbz/(p)\) for \(p\) prime.

\begin{definition}[Module]
\label{def:module}
Let \(\bba\) be a ring. An \(\bba\)-module is a non-empty set \(\Mo\) equipped with an addition \(+\colon \Mo \times \Mo \to \Mo\) and a scalar multiplication \(\cdot\colon \bba \times \Mo \to \Mo\) such that:
\begin{enumerate}
\item \((\Mo,+)\) is an abelian group with neutral element \(0 \in \Mo\).
\item \(a \cdot (m_1+m_2) = a \cdot m_1 + a \cdot m_2\) for all \(m_1,m_2 \in \Mo\) and all \(a \in \bba\).
\item \((a_1+a_2) \cdot m = a_1 \cdot m + a_2 \cdot m\) for all \(m \in \Mo\) and all \(a_1,a_2 \in \bba\).
\item \((a_1\cdot a_2) \cdot m = a_1 \cdot (a_2 \cdot m)\) for all \(m \in \Mo\) and all \(a_1,a_2 \in \bba\).
\item \(1 \cdot m = m\) for all \(m \in \Mo\).
\end{enumerate}
\end{definition}

The product in \(\bba\) and the scalar product in \(\Mo\) can also be written by juxtaposition. Then, \(a_1 \cdot a_2 \in \bba\) can be written as \(a_1a_2\) and \(a\cdot m \in \Mo\) can be written as \(am\).
When the chosen ring is a field \(\bbf\), then \(\Mo\) is an \(\bbf\)-vector space.

\begin{definition}[Crisp submodule]
\label{def:submodule}
Let \(\Mo\) be an \(\bba\)-module. A subset \(\No \subseteq \Mo\) is called a \emph{crisp submodule} of \(\Mo\) if \(0 \in \No\), \(m_1+m_2 \in \No\) for all \(m_1,m_2 \in \No\), and \(am \in \No\) for all \(a \in \bba\) and all \(m \in \No\). The set of all crisp submodules of \(\Mo\) is denoted by \(\sub(\Mo)\).
\end{definition}

In other words, a crisp submodule is a crisp subset \(\No \subseteq \Mo\) that is an \(\bba\)-module itself with the addition and scalar product induced from \(\Mo\). 
The following notion of \(L\)-fuzzy submodule is taken from \cite{mashinchi1992fuzzy}.

\begin{definition}[\(L\)-fuzzy submodule]
\label{def:fuzzy_submodule}
Let \(\Mo\) be an \(\bba\)-module and \((L,\leq)\) a CDL. An \(L\)-fuzzy subset \(\mu \in \calfp(\Mo,L)\) is called an \emph{\(L\)-fuzzy submodule} of \(\Mo\) if it satisfies \(\mu(0)=1\), \(\mu(m_1+m_2) \geq \mu(m_1) \wedge \mu(m_2)\) for all \(m_1,m_2 \in \Mo\) and \(\mu(a \cdot m) \geq \mu(m)\) for all \(m \in \Mo\) and all \(a \in \bba\). The set of all \(L\)-fuzzy submodules of \(\Mo\) is denoted by \(\calfm(\Mo,L)\).
\end{definition}

If the ring is a field \(\bbf\), then \(\Mo\) is an \(\bbf\)-vector space and any \(\mu \in \calfm(\Mo,L)\) is called an \emph{\(L\)-fuzzy subspace of $\Mo$}. In this case, since every non-zero scalar \(a \in \bbf\) is a unit, for any \(m \in \Mo\), we have
\( \mu(m) \leq \mu(a \cdot m) \leq \mu(a^{-1} \cdot a \cdot m) = \mu(m) \)
and therefore \(\mu(a \cdot m) = \mu(m)\). Consequently, in the case of vector spaces, the third condition in \Cref{def:fuzzy_submodule} is equivalent to
\(\mu(a \cdot m) = \mu(m) \text{ for all } m \in \Mo \text{ and all } a \in \bbf \setminus \{0\}\).
We now state some basic properties of \(L\)-fuzzy submodules.

\begin{proposition}
\label{prop:submodules_properties}
Let \(\Mo\) be an \(\bba\)-module and \((L,\leq)\) a CDL.
\begin{enumerate}
\item Let \(\{\mu_i \mid i \in I\} \subseteq \calfm(\Mo,L)\). Then, \(\mu_{\wedge} = \bigcap_{i\in I} \mu_i \in \calfm(\Mo,L)\).
\item For any \(\mu \in \calfm(\Mo,L)\) and any \(\ell \in L\), the upper level set \(\mu^{\geq \ell} \subseteq \Mo\) is a crisp submodule of \(\Mo\).
\end{enumerate}
\end{proposition}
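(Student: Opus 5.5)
Both items follow directly from the three defining conditions in \Cref{def:fuzzy_submodule} together with the defining properties of meets in a CDL, so I will verify those conditions one at a time.

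For the first item, I will check the three conditions for \(\mu_\wedge(m) = \bigwedge_{i\in I}\mu_i(m)\), using the intersection operation of \Cref{def:fuzzy_operations} extended to arbitrary families, which exists because \(L\) has arbitrary meets.
\begin{enumerate}
\item Since \(\mu_i(0)=1\) for every \(i\), we get \(\mu_\wedge(0)=\bigwedge\{1\}=1\). This also covers \(I=\emptyset\), because \(\bigwedge\emptyset=1\); then \(\mu_\wedge\) is the constant map \(1\), which is trivially an \(L\)-fuzzy submodule.
\item Fix \(m_1,m_2\in\Mo\). For each \(j\in I\) we have \(\mu_\wedge(m_1)\wedge\mu_\wedge(m_2) \le \mu_j(m_1)\wedge\mu_j(m_2) \le \mu_j(m_1+m_2)\). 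The first inequality holds because \(\mu_\wedge(m_k)\le\mu_j(m_k)\) and \(\wedge\) is monotone; the second is the defining condition for \(\mu_j\). Hence \(\mu_\wedge(m_1)\wedge\mu_\wedge(m_2)\) is a lower bound of \(\{\mu_j(m_1+m_2)\mid j\in I\}\). By the universal property of the meet, it is therefore \(\le \mu_\wedge(m_1+m_2)\).
\item Similarly, \(\mu_\wedge(m)\le\mu_j(m)\le\mu_j(am)\) for all \(j\), so \(\mu_\wedge(m)\le\mu_\wedge(am)\).
\end{enumerate}
The only point needing care is the monotonicity of \(\wedge\), i.e.\ that \(a\le a'\) and \(b\le b'\) imply \(a\wedge b\le a'\wedge b'\). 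This follows from the definition of meet, since \(a\wedge b\le a\le a'\) and \(a\wedge b\le b\le b'\).

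For the second item, fix \(\ell\in L\) and check the three conditions of \Cref{def:submodule} for \(\mu^{\geq \ell}\).
\begin{enumerate}
\item Since \(\mu(0)=1\ge\ell\), we have \(0\in\mu^{\geq\ell}\).
\item If \(m_1,m_2\in\mu^{\geq\ell}\), then \(\ell\le\mu(m_1)\) and \(\ell\le\mu(m_2)\), so \(\ell\le\mu(m_1)\wedge\mu(m_2)\) by the universal property of the meet. Hence \(\mu(m_1+m_2)\ge\ell\).
\item If \(m\in\mu^{\geq\ell}\) and \(a\in\bba\), then \(\mu(am)\ge\mu(m)\ge\ell\).
\end{enumerate}
Thus \(\mu^{\geq\ell}\in\sub(\Mo)\).

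I do not expect a genuine obstacle in either item; the argument is a routine unwinding of definitions. The step that most deserves explicit justification is the passage from ``a lower bound for each \(j\)'' to ``a lower bound of the meet'' in the second condition of the first item, which is precisely the universal property of \(\bigwedge\). Complete distributivity of \(L\) is not needed for either item.
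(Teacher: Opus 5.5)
Your proof is correct and takes essentially the same route as the paper. You verify the three defining conditions directly for $\mu_\wedge$ and for $\mu^{\geq \ell}$; the only cosmetic difference is that you bound $\mu_\wedge(m_1)\wedge\mu_\wedge(m_2)$ below each $\mu_j(m_1+m_2)$ and invoke the universal property of the meet, whereas the paper rewrites $\bigwedge_i\bigl(\mu_i(m_1)\wedge\mu_i(m_2)\bigr)$ as $\bigl(\bigwedge_i\mu_i(m_1)\bigr)\wedge\bigl(\bigwedge_i\mu_i(m_2)\bigr)$. Your treatment of $I=\emptyset$ and your remark that complete distributivity is not needed are both accurate.
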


\begin{proof} We prove each statement in turn.
\begin{enumerate}
\item In the first place, we have \(\mu_{i}(0)=1\) for all \(i \in I\), which implies \(\mu_{\wedge}(0)=1\). Consider now \(m_1,m_2 \in \Mo\). We have \(\mu_i (m_1+m_2) \geq \mu_i(m_1) \wedge \mu_i(m_2)\) for all \(i \in I\). Therefore,
\[
\mu_{\wedge}(m_1+m_2) = \bigwedge_{i\in I} \mu_i (m_1+m_2) \geq \bigwedge_{i \in I} \left(\mu_i(m_1) \wedge \mu_i(m_2) \right) = \left(\bigwedge_{i \in I} \mu_i(m_1) \right) \wedge \left(\bigwedge_{i \in I} \mu_i(m_2) \right) = \mu_{\wedge}(m_1) \wedge \mu_{\wedge}(m_2).
\]
Consider now any \(m \in \Mo\) and any \(a \in \bba\). We have \(\mu_i (a \cdot m) \geq \mu_i(m)\) for all \(i \in I\). Therefore, \(\mu_{\wedge}(a \cdot m) = \bigwedge_{i\in I} \mu_i (a \cdot m) \geq \bigwedge_{i \in I} \mu_i(m) = \mu_{\wedge}(m)\). Thus, \(\mu_{\wedge} \in \calfm(\Mo,L)\).
\item In the first place, \(0 \in \mu^{\geq \ell}\) because \(\mu(0)=1 \geq \ell\). Consider now \(m_1,m_2 \in \mu^{\geq \ell}\). This implies that \(\mu(m_1) \geq \ell\) and \(\mu(m_2)\geq \ell\). Therefore, \(\mu(m_1+m_2) \geq \mu(m_1) \wedge \mu(m_2) \geq \ell\) and \(m_1+m_2 \in \mu^{\geq \ell}\). Finally, consider any \(m \in \mu^{\geq \ell}\) and any scalar \(a \in \bba\). We have \(\mu(a \cdot m) \geq \mu(m) \geq \ell\), so \(a \cdot m \in \mu^{\geq \ell}\). Thus, \(\mu^{\geq \ell}\) is a crisp submodule of \(\Mo\). \qedhere
\end{enumerate}
\end{proof}

The second item shows that the contravariant functor 
\(\cut(\mu): L \to \calp(\Mo)\) defined by \(\mu \in \calfm(\Mo,L)\) factors through the category \((\sub(\Mo),\subseteq)\), where morphisms are given by inclusions. That is, there exists a functor \(g: L \to \sub(\Mo)\) such that \(\cut(\mu)= i \circ g\), where \(i: \sub(\Mo) \to \calp(\Mo)\) denotes the inclusion functor. Accordingly, \(\cut(\mu)\) can be identified with \(g\), and we may equivalently view it as a functor \(\cut(\mu): L \to \sub(\Mo)\).

\begin{remark}
\label{rem:supportcorelevelset}
It follows from \Cref{prop:subsetsproperties} that the core \(\mu_*\) is always a crisp submodule of \(\Mo\) because \(\mu_* = \mu^{\geq 1}\). However, the support \(\mu^* = \mu^{>0}\) is not necessarily a submodule of \(\Mo\). For instance, consider the CDL \((L,\leq)\), where \(L = \{0, x, y, 1\}\) and \(0 \leq x, y \leq 1\), but \(x\) and \(y\) are incomparable. In this CDL, we have \(x \wedge y = 0\) and \(x \vee y = 1\). Now, consider the following \(L\)-fuzzy subspace \(\mu \in \calfm(\bbr^2,L)\):
\[
\mu((r_1,r_2)) =
\begin{cases}
1 & \text{if } r_1 = r_2 = 0,\\
x & \text{if } r_2 = 0 \text{ and } r_1 \neq 0,\\
y & \text{if } r_1 = 0 \text{ and } r_2 \neq 0,\\
0 & \text{otherwise.}
\end{cases}
\]
Its support is the set \(\mu^* = \{(r_1, r_2) \in \bbr^2 \mid r_1 \cdot r_2 = 0\}\), which is not a crisp subspace of \(\bbr^2\). 
\end{remark}

The following proposition gives a sufficient condition for \(\mu^*\) to be a crisp submodule.

\begin{proposition}
\label{prop:meet-prime}
Let \(\mu \in \calfm(\Mo,L)\). If \(0\) is a meet-prime element in \(L\), the support \(\mu^*\) is a crisp submodule of \(\Mo\).
\end{proposition}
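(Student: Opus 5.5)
The plan is to check directly the three conditions of \Cref{def:submodule} for the crisp subset \(\mu^* = \mu^{\neq 0} = \{m \in \Mo \mid \mu(m) \neq 0\}\), using only the axioms of \Cref{def:fuzzy_submodule} and the meet-primality of \(0\). First observe what meet-primality of \(0\) means concretely. If \(a \wedge b \leq 0\), then \(a \leq 0\) or \(b \leq 0\). Since \(0\) is the least element of \(L\), this says that \(a \wedge b = 0\) forces \(a = 0\) or \(b = 0\). Contrapositively, if \(a \neq 0\) and \(b \neq 0\), then \(a \wedge b \neq 0\). This is the only property of \(L\) needed.

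The membership of \(0\) is immediate, since \(\mu(0) = 1\). Note that \(1 \neq 0\), for otherwise \(L\) is trivial and \(0 = 1\) cannot be meet-prime in the sense needed; if \(L=\{0\}\) one may instead observe that \(\mu^*=\emptyset\) and treat this degenerate case separately, or simply assume \(L\) non-trivial. Closure under addition is the key step. Take \(m_1, m_2 \in \mu^*\), so that \(\mu(m_1) \neq 0\) and \(\mu(m_2) \neq 0\). By meet-primality, \(\mu(m_1) \wedge \mu(m_2) \neq 0\). Then \(\mu(m_1+m_2) \geq \mu(m_1) \wedge \mu(m_2) > 0\), hence \(\mu(m_1+m_2) \neq 0\) and \(m_1 + m_2 \in \mu^*\). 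Closure under scalars is similar: for \(m \in \mu^*\) and \(a \in \bba\) we have \(\mu(a m) \geq \mu(m) > 0\), so \(a m \in \mu^*\).

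The only real obstacle is the addition step. This is exactly where \Cref{rem:supportcorelevelset} fails: there \(x \wedge y = 0\) with \(x, y \neq 0\). Meet-primality of \(0\) is precisely the hypothesis that rules this out. The remaining care point is the degenerate one-element lattice, where \(\mu(0)=1=0\); I would handle it by a brief remark or by the standing non-triviality of \(L\).
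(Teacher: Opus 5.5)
Your proof is correct and is essentially the paper's argument: you check the three submodule conditions directly, use meet-primality of \(0\) exactly in the closure-under-addition step, and get scalar closure from \(\mu(am)\geq\mu(m)\). One small correction to your aside: in the one-element lattice, \(0\) \emph{is} (trivially) meet-prime under the paper's definition and \(\mu^*=\emptyset\), so the statement genuinely fails there and cannot be handled ``separately''; the right fix is to assume \(0\neq 1\), which the paper also does implicitly when it writes \(\mu(0)=1>0\).
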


\begin{proof}
In the first place, \(0 \in \mu^*\) because \(\mu(0)=1 >0\). Consider now \(m_1,m_2 \in \mu^*\). This implies that \(\mu(m_1)>0\) and \(\mu(m_2)>0\). Since \(0\) is meet-prime in \(L\), the meet of two non-zero elements is also non-zero. Therefore, \(\mu(m_1+m_2) \geq \mu(m_1) \wedge \mu(m_2)>0\) and \(m_1+m_2 \in \mu^*\). Finally, consider any \(m \in \mu^*\) and any scalar \(a \in \bba\). We have \(\mu(am) \geq \mu(m)>0\), so \(am \in \mu^*\). Thus, \(\mu^*\) is a crisp submodule of \(\Mo\).
\end{proof}

The element \(0\) is meet-prime in some CDLs, such as \(([0,1],\leq)\) and \(\fdl(x_1,\dots,x_n)\), but it is not meet-prime in the CDL described in \Cref{rem:supportcorelevelset}. Therefore, in order to apply concepts whose definitions depend on the support, we replace the subset \(\mu^*\) with an appropriate submodule generated by it.

\begin{definition}[Generated crisp submodule]
\label{def:generated}
Let \(\Mo\) be an \(\bba\)-module and let \(S \subseteq \Mo\).
The \emph{crisp submodule generated by \(S\)}, denoted \(\langle S \rangle\), is defined as \(\langle S \rangle = \bigcap \{\No \in \sub(\Mo) \mid S \subseteq \No\}\).
\end{definition}

As a consequence of \Cref{prop:submodules_properties}, \(\langle S \rangle\) is indeed a submodule because it is the intersection of an arbitrary collection of submodules, and it is by definition the smallest submodule of \(\Mo\) that contains \(S\). Replacing the crisp subset \(S\) by an \(L\)-fuzzy subset \(\mu\) leads to the following definition.

\begin{definition} [Generated \(L\)-fuzzy submodule \cite{ashok2020fuzzy}]
\label{def:generatedsubmodule}
Let \(\Mo\) be an \(\bba\)-module, \((L,\leq)\) a CDL and let \(\mu \in \calfp(\Mo,L)\). The \emph{\(L\)-fuzzy submodule generated by \(\mu\)}, denoted \(\langle \mu \rangle\), is defined as \(\langle \mu \rangle = \bigcap \left\{ \nu \in \calfm(\Mo,L) \mid \mu \subseteq \nu \right\}\).
\end{definition}

Again, \(\langle \mu \rangle\) is the smallest \(L\)-fuzzy submodule containing \(\mu\). We now provide a more constructive definition of \(\langle \mu \rangle\). This is a particular case of \cite[Theorem~3.4]{addis2021commutator}, stated there in the general setting of universal algebras, which we rewrite here in our notation.

\begin{theorem}[\cite{addis2021commutator}]
\label{thm:generatedconstructive}
Let \( \mu \in \calfp(\Mo,L) \) and \(m \in \Mo\). Then, we have
\[
\langle \mu \rangle (m)
= \bigvee \Big\{\,\bigwedge_{i=1}^{n} \mu(m_i)\;\Big|\;
m = \sum_{i=1}^{n} a_i m_i,\ a_i \in \bba,\ m_i \in \Mo,\ n \ge 0 \Big\},
\]
where the case \(n = 0\) corresponds to the empty sum \(\sum \emptyset = 0\), considered only for \(m = 0\).
\end{theorem}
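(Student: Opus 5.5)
Write \(\nu(m)\) for the right-hand side of the displayed formula. The plan is to show \(\nu = \langle \mu \rangle\) through the characterization in \Cref{def:generatedsubmodule}: \(\langle \mu \rangle\) is the smallest \(L\)-fuzzy submodule containing \(\mu\). Concretely, I would prove three facts: (i) \(\mu \subseteq \nu\); (ii) \(\nu \in \calfm(\Mo,L)\); (iii) \(\nu \subseteq \xi\) for every \(\xi \in \calfm(\Mo,L)\) with \(\mu \subseteq \xi\). Facts (ii) and (iii) give \(\langle \mu \rangle \subseteq \nu\) and \(\nu \subseteq \langle \mu \rangle\) respectively, and fact (i) is needed so that \(\nu\) is an admissible member of the intersection defining \(\langle\mu\rangle\).

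\textbf{Fact (i)} uses the trivial representation \(m = 1\cdot m\) with \(n=1\), so \(\mu(m)\) is one of the joined terms and hence \(\mu(m) \le \nu(m)\).

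\textbf{Fact (iii).} Let \(\xi \supseteq \mu\) be an \(L\)-fuzzy submodule and take any representation \(m=\sum_{i=1}^n a_i m_i\). By induction on \(n\), using \(\xi(0)=1\), \(\xi(am)\ge \xi(m)\) and \(\xi(m_1+m_2)\ge \xi(m_1)\wedge\xi(m_2)\), we get
\[
\xi(m)\ge \bigwedge_i \xi(a_i m_i)\ge \bigwedge_i \xi(m_i)\ge\bigwedge_i \mu(m_i).
\]
The case \(n=0\) gives \(\xi(0)=1\), consistent with the empty meet being \(1\). Taking the join over all representations yields \(\nu(m)\le\xi(m)\).

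\textbf{Fact (ii).} Three properties must be checked.
\begin{itemize}
\item \(\nu(0)=1\), because the empty representation contributes \(\bigwedge\emptyset=1\).
\item \(\nu(am)\ge\nu(m)\). If \(m=\sum a_i m_i\), then \(am=\sum (aa_i)m_i\) uses the same \(m_i\), so every term in the join defining \(\nu(m)\) also appears in the join for \(\nu(am)\). For \(m=0\) with the empty representation, note \(a\cdot 0=0\), so that term is also available for \(\nu(a\cdot 0)\).
\item \(\nu(m_1+m_2)\ge\nu(m_1)\wedge\nu(m_2)\). This is the main obstacle, since it requires distributing a binary meet over arbitrary joins. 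Index the representations of \(m_1\) by \(j\in J_1\) with meets \(\alpha_j\), and those of \(m_2\) by \(k\in J_2\) with meets \(\beta_k\). Concatenating a representation of \(m_1\) with one of \(m_2\) gives a representation of \(m_1+m_2\) whose meet is \(\alpha_j\wedge\beta_k\). Hence \(\nu(m_1+m_2)\ge\bigvee_{j,k}(\alpha_j\wedge\beta_k)\).
\end{itemize}

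It remains to show
\[
\Bigl(\bigvee_j\alpha_j\Bigr)\wedge\Bigl(\bigvee_k\beta_k\Bigr)=\bigvee_{j,k}(\alpha_j\wedge\beta_k).
\]
I would obtain this from complete distributivity with \(I=\{1,2\}\). A choice function \(f\in J^I\) picks one representation for each of \(m_1\) and \(m_2\). To apply the definition literally, take \(J=J_1\sqcup J_2\) and pad each row by repeating one fixed element, or note directly that the infinite distributive law \(x\wedge\bigvee_k y_k=\bigvee_k (x\wedge y_k)\) follows from the CDL axiom, and apply it twice.

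The index sets \(J_1\) and \(J_2\) are nonempty because every \(m\) has the representation \(1\cdot m\). Consequently \(\nu\) is never computed as the join of an empty family, and the case \(n=0\) only matters at \(m=0\), where it correctly forces the value \(1\).
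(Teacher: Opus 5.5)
Your proof is correct and self-contained. The paper does not prove this statement itself: it imports it as the module case of \cite[Theorem~3.4]{addis2021commutator}, which is stated for $L$-fuzzy subalgebras of arbitrary universal algebras, with term operations applied to finitely many elements in place of the linear combinations $\sum a_i m_i$. You instead verify directly the minimality characterization in Definition~\ref{def:generatedsubmodule}. Facts (i) and (ii) make $\nu$ a member of the family whose intersection defines $\langle\mu\rangle$, and fact (iii) places $\nu$ below every member of that family. The citation gives uniformity across algebraic structures. Your argument gives self-containment and shows exactly which lattice hypothesis is used. Distributivity enters only in the subadditivity step, and there you need only the frame law $x\wedge\bigvee_k y_k=\bigvee_k(x\wedge y_k)$, applied twice, not the full completely distributive law. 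So your proof establishes the formula for any complete lattice in which binary meets distribute over arbitrary joins. Some distributivity of this kind is genuinely required at that step, since in a non-distributive lattice $\bigl(\bigvee_j\alpha_j\bigr)\wedge\bigl(\bigvee_k\beta_k\bigr)$ can strictly exceed $\bigvee_{j,k}(\alpha_j\wedge\beta_k)$. Your handling of the degenerate cases is also right: the empty representation forces $\nu(0)=1$, and the representation $1\cdot m$ guarantees $J_1,J_2\neq\emptyset$ for the padding trick.
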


That is, for any \(m \in \Mo\), we consider all possible finite linear combinations \(m = \sum_{i=1}^{n} a_i m_i,\)
where \(a_i \in \bba\), \(m_i \in \Mo\), and \(n \geq 0\). Each linear combination has an associated value \(\bigwedge_{i=1}^{n} \mu(m_i) \in L\), and \(\langle \mu \rangle(m)\) is defined as the join of all these values.  
The following result applies \Cref{thm:generatedconstructive} to give a explicit description of \(\langle \mu \rangle\) when \(\mu \in \calfp(\Mo,L)\) is a finite union of \(L\)-fuzzy singletons.

\begin{corollary}
\label{cor:singletons_generating}
Let \(\Mo\) be an \(\bba\)-module, \((L,\leq)\) a CDL, and let 
\(E = \{e_1, \ldots, e_k\} \subset \Mo\) be a linearly independent set.  
Given \(\ell_1, \ldots, \ell_k \in L\), define the union of \(L\)-fuzzy singletons 
\(\mu = \bigcup_{i=1}^k \{e_i\}_{\ell_i}\). If \(m \in \langle E \rangle\), let 
\(m = \sum_{i=1}^k a_i e_i\) with \(a_i \in \bba\) be the unique linear combination of elements of \(E\) representing \(m\). 
Then
\[
\langle \mu \rangle(m) =
\begin{cases}
\displaystyle \bigwedge_{\substack{i=1,\ldots,k \\ a_i \neq 0}} \ell_i 
& \text{if } m \in \langle E \rangle, \\[0.8em]
0 
& \text{if } m \notin \langle E \rangle.
\end{cases}
\]
\end{corollary}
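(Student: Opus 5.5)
The plan is to apply \Cref{thm:generatedconstructive} directly and evaluate the join by bounding it from both sides. First note that \(\mu(x)=\ell_i\) if \(x=e_i\) and \(\mu(x)=0\) if \(x\notin E\). Since \(E\) is linearly independent, its elements are pairwise distinct, so this is well defined. Also \(0\notin E\), because \(1\cdot 0=0\) would be a nontrivial relation if the ring is nonzero.

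The key preliminary observation concerns a representation \(m=\sum_{j=1}^n b_j m_j\) in the join of \Cref{thm:generatedconstructive}. If some \(m_j\notin E\), then \(\mu(m_j)=0\), so its value \(\bigwedge_j \mu(m_j)\) is \(0\). Such representations therefore contribute nothing to the join. Only representations whose terms all lie in \(E\) can contribute a nonzero value, and this includes the empty representation when \(m=0\).

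\emph{Case \(m\notin\langle E\rangle\).} Every representation of \(m\) must use some \(m_j\notin E\); otherwise \(m\) would be a linear combination of elements of \(E\). Hence every value in the join equals \(0\), and \(\langle\mu\rangle(m)=0\).

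\emph{Case \(m\in\langle E\rangle\), lower bound.} Take the representation \(m=\sum_{a_i\neq 0} a_i e_i\), which uses only the \(e_i\) with \(a_i\neq 0\). Its value is \(\bigwedge_{a_i\neq 0}\ell_i\), so this element is \(\le\langle\mu\rangle(m)\). When \(m=0\) this is the empty representation with value \(1=\bigwedge\emptyset\), which is consistent.

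\emph{Case \(m\in\langle E\rangle\), upper bound.} This is the step needing care. Consider any representation \(m=\sum_j b_j m_j\) with all \(m_j\in E\); representations with some \(m_j\notin E\) have value \(0\) and need no attention. Let \(J\subseteq\{1,\dots,k\}\) be the set of indices \(i\) such that \(e_i\) occurs among the \(m_j\), possibly repeatedly. The value of the representation is \(\bigwedge_{i\in J}\ell_i\). Grouping repeated terms gives \(m=\sum_{i\in J} c_i e_i\), where \(c_i\) is the sum of the coefficients attached to \(e_i\). Setting \(c_i=0\) for \(i\notin J\) and comparing with \(m=\sum_i a_i e_i\), uniqueness of coefficients (linear independence) gives \(c_i=a_i\) for all \(i\). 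In particular, \(a_i\neq 0\) implies \(i\in J\), so \(\{i: a_i\neq0\}\subseteq J\). Therefore \(\bigwedge_{i\in J}\ell_i\le\bigwedge_{a_i\neq0}\ell_i\). Taking the join over all representations yields \(\langle\mu\rangle(m)\le\bigwedge_{a_i\neq0}\ell_i\), which together with the lower bound gives equality.

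The main obstacle is this upper-bound bookkeeping. Representations may repeat an \(e_i\) or include \(e_i\) with coefficients that cancel, so one must compare the set of used generators \(J\) with the support of the unique coefficient vector. The inclusion of the support in \(J\), rather than equality, is exactly what linear independence provides, and it suffices because enlarging the index set only decreases the meet.
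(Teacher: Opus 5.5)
Your proposal is correct and follows the same route as the paper. You apply \Cref{thm:generatedconstructive}, discard representations using some \(m_j\notin E\) because their value is \(0\), and compare the reduced combination \(\sum_{a_i\neq 0}a_ie_i\) with all other representations. Your upper-bound step, where you group repeated generators and use uniqueness of coefficients to get \(\{i : a_i\neq 0\}\subseteq J\), makes precise what the paper only states briefly: that any other combination ``contains more elements of \(E\) than strictly needed, yielding a smaller meet.''
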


\begin{proof}
Let \(m \in \Mo\), and consider any finite linear combination 
\(m = \sum_{i=1}^{n} a_i m_i\) with \(m_i \in \Mo\) and \(a_i \in \bba\).  
If there exists some \(m_j \notin E\), 
by \Cref{def:fuzzysingleton}, it holds that \(\{e_i\}_{\ell_i}(m_j)=0\) for all \(i=1,\dots,k\). Consequently, 
\(\mu(m_j) = \bigvee_{i=1}^k 0=0\) and the value associated to the linear combination is \(\bigwedge_{i=1}^{n} \mu(m_i) = 0\). It remains to show that the formula for \(\langle \mu\rangle\) holds for the two possible cases:
\begin{enumerate}[label=\roman*)]
\item If \(m \notin \langle E \rangle\), it cannot be expressed as a linear combination of elements of \(E\).  
Hence, every finite linear combination representing \(m\) includes at least one element outside \(E\), and therefore \(\langle \mu \rangle(m) = \bigvee \{0\} = 0\).
\item If \(m \in \langle E \rangle\), then \(m\) has a unique representation 
\(m = \sum_{i=1}^k a_i e_i\) with \(a_i \in \bba\). 
We can also write the reduced linear combination 
\(m=\sum_{i=1,\ldots,k}^{a_i \neq 0} a_i e_i\),
whose associated value is
\(\bigwedge_{i=1,\ldots,k}^{a_i \neq 0} \ell_i\).  
Any other finite linear combination with sum \(m\) either contains elements outside \(E\), yielding value \(0\), or contains more elements of \(E\) than strictly needed, yielding a smaller meet.  
Thus, the join of all possible values is precisely
\(\langle \mu \rangle(m) = \bigwedge_{i=1,\ldots,k}^{a_i \neq 0} \ell_i\). \qedhere
\end{enumerate}
\end{proof}

We now recall the concept of module homomorphism to study how does it interact with \(L\)-fuzzy submodules.

\begin{definition}[Homomorphism of crisp modules]
\label{def:homomorphism}
Let \(\Mo, \No\) be two \(\bba\)-modules. The map \(f:\Mo \to \No\) is called a \emph{homomorphism of modules} if \(f(0)=0\), \(f(m_1+m_2)=f(m_1)+f(m_2)\) for all \(m_1,m_2 \in \Mo\) and \(f(am)=af(m)\) for all \(m \in \Mo\) and all \(a \in \bba\). A bijective homomorphism is called an \emph{isomorphism}. If such an isomorphism \(f: \Mo \to \No\) exists, we say that \(\Mo\) and \(\No\) are \emph{isomorphic} and write \(\Mo \cong \No\).
\end{definition}

This definition is extended to \(L\)-fuzzy submodules by adding a compatibility condition on their respective maps.

\begin{definition}[Homomorphism of \(L\)-fuzzy submodules \cite{pan1987fuzzy}]
\label{def:lfuzzyhomomorphism}
Let \(\Mo, \No\) be two \(\bba\)-modules, and let \(\mu \in \calfm(\Mo,L)\), \(\nu \in \calfm(\No,L)\). 
A homomorphism of \(\bba\)-modules \(f:\Mo \to \No\) is called a \emph{homomorphism of \(L\)-fuzzy submodules} if \(\mu (m) \leq \nu(f(m))\) for all \(m \in \Mo\). 
An \emph{isomorphism of \(L\)-fuzzy submodules} is an isomorphism \(f:\Mo \to \No\) with \(\mu (m) = \nu(f(m))\) for all \(m \in \Mo\). If such an isomorphism exists, we say that \(\mu\) and \(\nu\) are \emph{isomorphic} and write \(\mu \cong \nu\).
\end{definition}

The following results present basic properties of the interaction between homomorphisms, isomorphisms, and \(L\)-fuzzy submodules, with particular emphasis on images, preimages, and cuts.

\begin{proposition}(\cite{zahedi1993some})
\label{prop:extension_submodules}
Let \(f : \Mo \to \No\) be a homomorphism of \(\bba\)-modules. If \(\mu \in \calfm(\Mo, L)\) and \(\nu \in \calfm(\No, L)\), then \(f(\mu) \in \calfm(\No, L)\) and \(f^{-1}(\nu) \in \calfm(\Mo, L)\).
\end{proposition}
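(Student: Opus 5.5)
The plan is to verify the three axioms of \Cref{def:fuzzy_submodule} directly for each of the two fuzzy subsets, using the definitions of image and preimage from \Cref{def:fuzzy_operations} together with the homomorphism identities \(f(0)=0\), \(f(m_1+m_2)=f(m_1)+f(m_2)\) and \(f(am)=af(m)\).

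\emph{Preimage.} This case is immediate. Recall that \(f^{-1}(\nu)(m)=\nu(f(m))\).
\begin{itemize}
\item Since \(f(0)=0\), we get \(f^{-1}(\nu)(0)=\nu(0)=1\).
\item For the sum condition,
\[
f^{-1}(\nu)(m_1+m_2)=\nu(f(m_1)+f(m_2))\geq \nu(f(m_1))\wedge\nu(f(m_2)).
\]
\item For scalars, \(\nu(f(am))=\nu(af(m))\geq \nu(f(m))\).
\end{itemize}

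\emph{Image.} Recall that \(f(\mu)(n)=\bigvee\{\mu(m)\mid f(m)=n\}\).
\begin{itemize}
\item For the zero element, \(0\in f^{-1}(0)\) and \(\mu(0)=1\), so the join is \(1\).
\item For scalars, fix \(n\in\No\) and \(a\in\bba\). Every \(m\) with \(f(m)=n\) gives \(f(am)=an\). Hence \(\mu(m)\leq\mu(am)\leq f(\mu)(an)\). Taking the join over all such \(m\) gives \(f(\mu)(n)\leq f(\mu)(an)\). If \(f^{-1}(n)\) is empty, the join is \(0\) and the inequality is trivial.
\item For sums, I will use complete distributivity. The key instance is the binary version \(\left(\bigvee_j x_j\right)\wedge\left(\bigvee_k y_k\right)=\bigvee_{j,k}(x_j\wedge y_k)\), which is obtained from the CDL axiom with \(I=\{1,2\}\). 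With it,
\[
f(\mu)(n_1)\wedge f(\mu)(n_2)=\bigvee\{\mu(m_1)\wedge\mu(m_2)\mid f(m_1)=n_1,\ f(m_2)=n_2\}.
\]
For each such pair, \(f(m_1+m_2)=n_1+n_2\), so \(\mu(m_1)\wedge\mu(m_2)\leq\mu(m_1+m_2)\leq f(\mu)(n_1+n_2)\). Taking the join over all pairs gives the claim.
\item If either fiber is empty, the corresponding join is \(0\) and the left-hand side is \(0\), so the inequality holds trivially. The formula above covers this case anyway, since the index set of pairs is then empty.
\end{itemize}

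The only nontrivial step is this interchange of the binary meet with arbitrary joins in the sum condition for \(f(\mu)\). It is exactly where the CDL hypothesis, rather than mere lattice structure, is used, so I will state it explicitly as a consequence of the distributivity axiom, choosing \(J\) as the union of the two index sets with suitable padding by \(0\). The remaining verifications are routine.
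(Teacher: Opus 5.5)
Your proof is correct and complete. The paper gives no argument for this proposition and only cites \cite{zahedi1993some}, so your direct check of the three axioms of \Cref{def:fuzzy_submodule} is a self-contained substitute rather than a variant of a proof in the text.

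The preimage case, and the zero and scalar conditions for \(f(\mu)\), need only the homomorphism identities and the definition of join. You correctly identify the sum condition for \(f(\mu)\) as the one place where distributivity enters.

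Your padding device for a common index set \(J\) works because fibers of \(f\) over distinct points are disjoint, and fibers over the same point coincide. The degenerate cases also come out right:
\begin{itemize}
\item If exactly one fiber is empty, every choice function hits a padded \(0\).
\item If both fibers are empty, \(J^I=\emptyset\) and the join is \(0\).
\end{itemize}

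One small refinement. You only need the inequality \(f(\mu)(n_1)\wedge f(\mu)(n_2)\leq\bigvee_{m_1,m_2}\bigl(\mu(m_1)\wedge\mu(m_2)\bigr)\), since the reverse holds in any lattice. That inequality follows from the infinite distributive law \(\ell\wedge\bigvee_{j}\ell_j=\bigvee_{j}(\ell\wedge\ell_j)\), applied twice. So the result holds in any complete lattice satisfying that law. Complete distributivity is sufficient but more than is needed, so ``exactly where the CDL hypothesis is used'' is better phrased as ``where some infinite distributivity beyond completeness is used.''
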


\begin{proposition}
\label{prop:level_iso}
Let \(\mu \in \calfm(\Mo,L)\) and \(\nu \in \calfm(\No,L)\). If the map \(f: \Mo \to \No\) is an isomorphism of \(L\)-fuzzy submodules, then \(\mu^{\geq \ell}\cong\nu^{\geq \ell}\) for every \(\ell \in L\).
\end{proposition}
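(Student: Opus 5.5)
The natural approach is to restrict the given isomorphism \(f\colon \Mo \to \No\) to each cut and check that the restriction is an isomorphism of crisp modules onto the corresponding cut of \(\nu\). Fix \(\ell \in L\). By \Cref{prop:submodules_properties}, both \(\mu^{\geq \ell} \subseteq \Mo\) and \(\nu^{\geq \ell} \subseteq \No\) are crisp submodules, so it makes sense to ask for a module isomorphism between them. Define \(f_\ell\colon \mu^{\geq \ell} \to \nu^{\geq \ell}\) by \(f_\ell(m) = f(m)\).

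The first step is to show that \(f_\ell\) is well defined, that is, that \(f\) maps \(\mu^{\geq \ell}\) into \(\nu^{\geq \ell}\). If \(m \in \mu^{\geq \ell}\), then \(\nu(f(m)) = \mu(m) \geq \ell\) by the defining equality of an isomorphism of \(L\)-fuzzy submodules in \Cref{def:lfuzzyhomomorphism}. Hence \(f(m) \in \nu^{\geq \ell}\).

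The second step is to show that \(f_\ell\) is a module homomorphism. Since \(f_\ell\) is the restriction of the homomorphism \(f\) to a submodule, it inherits \(f(0)=0\), additivity and \(\bba\)-linearity directly.

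The third step is bijectivity:
\begin{itemize}
\item \emph{Injectivity.} This follows immediately from the injectivity of \(f\).
\item \emph{Surjectivity.} Take \(n \in \nu^{\geq \ell}\). Since \(f\) is bijective, there is a unique \(m \in \Mo\) with \(f(m) = n\). Then \(\mu(m) = \nu(f(m)) = \nu(n) \geq \ell\), so \(m \in \mu^{\geq \ell}\) and \(f_\ell(m) = n\).
\end{itemize}
Therefore \(f_\ell\) is a bijective homomorphism, that is, an isomorphism, and \(\mu^{\geq \ell} \cong \nu^{\geq \ell}\).

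There is no serious obstacle here. The only point that needs care is surjectivity, because it is the one place where the equality \(\mu(m) = \nu(f(m))\) is needed rather than the inequality used for mere homomorphisms. With only \(\mu \leq \nu \circ f\), the map \(f\) would still send \(\mu^{\geq \ell}\) into \(\nu^{\geq \ell}\), but its image could be a proper subset of \(\nu^{\geq \ell}\).
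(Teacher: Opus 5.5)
Your proof is correct and follows the same route as the paper. The paper uses the equality \(\mu(m)=\nu(f(m))\) to get \(m\in\mu^{\geq\ell}\iff f(m)\in\nu^{\geq\ell}\), hence \(f(\mu^{\geq\ell})=\nu^{\geq\ell}\), and then concludes from bijectivity of \(f\) that the restriction is an isomorphism; you are only slightly more explicit, citing \Cref{prop:submodules_properties} for the cuts being submodules and checking that the restriction is a homomorphism.
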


\begin{proof}
For any \(m\in\Mo\), we have
\[
m\in\mu^{\ge\ell}
\;\Longleftrightarrow\;
\mu(m)\ge\ell
\;\Longleftrightarrow\;
\nu(f(m))\ge\ell
\;\Longleftrightarrow\;
f(m)\in\nu^{\ge\ell}.
\]
Hence \(f(\mu^{\ge\ell})=\nu^{\ge\ell}\). Since \(f\) is bijective, the restriction \(f|_{\mu^{\ge\ell}} :\mu^{\ge\ell}\to\nu^{\ge\ell}\) is bijective too, and therefore \(\mu^{\ge\ell}\cong\nu^{\ge\ell}\).
\end{proof}

We continue with the definition of quotient, which is crucial to define \(L\)-fuzzy simplicial homology in \Cref{sec:fuzhom}.

\begin{definition}[Quotient of crisp submodules]
\label{def:quotient}
Let \(\Mo\) be an \(\bba\)-module and let \(\No \in \sub(\Mo)\). The \emph{quotient} \(\Mo/\No\) is the \(\bba\)-module whose elements are the cosets \([m] = m + \No=\{m+n\mid n \in \No\}\), with addition defined by \([m_1]+[m_2] = [m_1+m_2]\) and scalar product defined by \(a[m]=[am]\). 
\end{definition}

Given an element \(m \in \Mo\), the coset \([m] \in \Mo/\No\) is called the \emph{class of \(m\)}. The classes \([m_1]\) and \([m_2]\) are equal if and only if \(m_1 - m_2 \in N\).
We now introduce the quotient of \(L\)-fuzzy submodules. 
The original source \cite{ashok2020fuzzy} defines it in terms of the support \(\mu^*\) because \(0\) is meet-prime in the CDL \(([0,1],\leq)\) and therefore \(\mu^*\) is indeed a submodule. To obtain a definition that is valid for an arbitrary CDL, we replace \(\mu^*\) with \(\langle \mu^* \rangle\).

\begin{definition}[Quotient of \(L\)-fuzzy submodules]
\label{def:lfuzzyquotient}
Let \(\Mo\) be an \(\bba\)-module, \((L,\leq)\) a CDL and \(\mu, \nu \in \calfm(\Mo,L)\) two \(L\)-fuzzy submodules such that \(\mu \subseteq \nu\). The \emph{quotient of \(\nu\) with respect to \(\mu\)} is the \(L\)-fuzzy submodule \(\nu/\mu \in \calfm(\langle\nu^*\rangle/\langle\mu^*\rangle,L)\) such that for each \(m \in \langle\nu^*\rangle\):
\[
(\nu/\mu)([m]) = \bigvee \{\nu(n) \mid n \in [m]\}.
\]
\end{definition}

\begin{remark}
Note that this definition does not use the values of \(\mu\), only its support \(\mu^*\). That is, if we had two different \(L\)-fuzzy submodules \(\mu_1,\mu_2 \subseteq \nu\) with \(\langle\mu_1^*\rangle = \langle\mu_2^*\rangle\), then \(\nu/\mu_1 = \nu/\mu_2\). It may be interesting for future work to develop a definition of quotient between two \(L\)-fuzzy submodules that actually uses the values of the denominator.
\end{remark}

We conclude this section by giving some results to classify modules and \(L\)-fuzzy submodules. We say that an \(\bba\)-module \(\Mo\) is \emph{finitely generated} if there exists a finite subset \(S \subseteq \Mo\) such that \(\Mo = \langle S \rangle\). The following theorem provides a complete classification of finitely generated \(\bbd\)-modules (being \(\bbd\) a PID) up to isomorphism.

\begin{theorem}[Structure theorem for finitely generated \(\bbd\)-modules \cite{dummit2004chapter12}] 
\label{thm:modules_structure}
Let \(\bbd\) be a PID and \(\Mo\) a finitely generated \(\bbd\)-module. Then, there exists 
a unique \(\beta \in \bbz_{\geq0}\) and 
a unique sequence $a_1,\ldots,a_m$ of non-zero and non-unit elements of $\bbd$ with \(a_1 \mid a_2 \mid \cdots \mid a_m\) such that:
\[
\Mo \cong \bbd^{\beta} \oplus \bigoplus_{j=1}^m\bbd/(a_j).
\]
\end{theorem}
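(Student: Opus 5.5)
The plan is to reduce the statement to the Smith normal form of a presentation matrix, which is the standard route to the structure theorem. Since \(\Mo\) is finitely generated, say by \(S=\{s_1,\dots,s_n\}\), I will first consider the surjective homomorphism \(\pi\colon \bbd^n \to \Mo\) that sends the \(i\)-th standard basis vector to \(s_i\). Its kernel \(K\) is a submodule of \(\bbd^n\). The first key step is to show that every submodule of \(\bbd^n\) is free of rank at most \(n\). I will prove this by induction on \(n\), projecting onto the last coordinate. The image of \(K\) under this projection is an ideal of \(\bbd\), hence principal because \(\bbd\) is a PID. The kernel of the projection restricted to \(K\) is a submodule of \(\bbd^{n-1}\), so the induction hypothesis applies to it, and the short exact sequence then splits because the image is free. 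Choosing a basis of \(K\) gives an \(n\times r\) matrix \(A\) over \(\bbd\) with \(\Mo \cong \bbd^n/\operatorname{col}(A)\).

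The second step, which I expect to be the main obstacle, is to diagonalize \(A\) by invertible row and column operations, that is, to find \(P,Q\) invertible with \(PAQ=\operatorname{diag}(d_1,\dots,d_s,0,\dots)\) and \(d_1\mid d_2\mid\cdots\mid d_s\). Over a Euclidean domain this follows from a division-with-remainder argument. For a general PID I would instead induct on the number of prime factors of the top-left entry. If the top-left entry \(d\) fails to divide an entry \(b\) in its row or column, write \(g=\gcd(d,b)=xd+yb\) by Bézout. Then the unimodular \(2\times 2\) block \(\begin{pmatrix} x & y\\ -b/g & d/g\end{pmatrix}\) replaces \(d\) by \(g\), which has strictly fewer prime factors. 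This process terminates because \(\bbd\) is a UFD, so the ascending chain condition holds. Once \(d\) divides its whole row and column, I clear them. If \(d\) fails to divide some entry elsewhere, I add that row to the first and repeat; this secures the divisibility chain.

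Applying the change of basis \(P\) to \(\bbd^n\) then yields
\[
\Mo \cong \bbd^n/\operatorname{col}(PAQ) \cong \bigoplus_{i} \bbd/(d_i) \oplus \bbd^{\,n-s}.
\]
Units \(d_i\) give zero summands and are discarded. The remaining non-unit \(d_i\) become the \(a_j\), and \(\beta=n-s\) is the free rank.

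For uniqueness, \(\beta\) is the rank of \(\Mo/T(\Mo)\), where \(T(\Mo)\) is the torsion submodule. It is well defined because tensoring with the fraction field turns it into a vector-space dimension. For the torsion part, I would fix each prime \(p\) and compare the dimensions over the field \(\bbd/(p)\) of the modules \(p^{k}T/p^{k+1}T\) for all \(k\). These dimensions count how many \(a_j\) have \(p\)-adic valuation greater than \(k\). They therefore determine the multiset of valuations at each prime, and the divisibility chain then fixes the \(a_j\) up to units.
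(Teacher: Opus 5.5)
The paper gives no proof of this theorem; it is quoted from Dummit--Foote, so the comparison has to be with that source. Your proposal is correct, but your existence argument follows a different route from the cited one.

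\textbf{Existence.} Dummit--Foote derive existence from the ``stacked bases'' theorem for a submodule \(N\) of a free module of rank \(n\), proved without coordinates:
\begin{itemize}
\item among the ideals \(\varphi(N)\), \(\varphi \in \operatorname{Hom}(\bbd^n,\bbd)\), they choose a maximal one, using that \(\bbd\) is Noetherian;
\item its generator \(a_1\) splits off a summand \(\bbd y_1\) with \(a_1 y_1 \in N\), and they induct;
\item the divisibility \(a_1 \mid a_2\) comes from that same maximality.
\end{itemize}
You instead show the relation module is free by projecting onto the last coordinate, and then do explicit Bézout row and column reduction. In effect you reprove the Smith normal form (\Cref{thm:snf}) and deduce the structure theorem from it.

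Your route is algorithmic and fits this paper well. It is exactly the mechanism by which \Cref{prop:diagonals-product-zero} and the U/T/R/F classification later read off the Betti number and torsion coefficients of \(\Ho_d\). So in your version, two results the paper cites separately become one argument. The cited route is shorter and avoids matrix bookkeeping.

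\textbf{Uniqueness.} Your argument is the standard one and matches the cited treatment in spirit. The free rank comes from the fraction field, and the torsion part from the \(\bbd/(p)\)-dimensions of \(p^kT/p^{k+1}T\).

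\textbf{Small points to tighten.}
\begin{itemize}
\item Before counting prime factors, move a non-zero entry to the top-left, since \(0\) has no finite factorization.
\item The number \(m\) itself is recovered as \(\max_p \dim_{\bbd/(p)} T/pT\). The non-unit \(a_1\) has some prime factor, which then divides every \(a_j\). Alternatively, align valuations from the top of the divisibility chain.
\item You are right to conclude uniqueness only up to units. The statement's ``unique sequence'' must be read up to associates, since for example \(\bbz/(2)=\bbz/(-2)\).
\end{itemize}
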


By this theorem, every finitely generated \(\bbd\)-module \(\Mo\) is completely determined (up to isomorphism) by its \emph{Betti number} \(\beta\), which defines the free submodule \(\bbd^{\beta}\), and its \emph{torsion coefficients} \(a_1,\ldots,a_m\), which describe the torsion submodule \(\bigoplus_{j=1}^m \bbd/(a_j)\).
The Betti number \(\beta\) is called the \emph{rank} of \(\Mo\), and we write \(\rank(\Mo) = \beta\).
When the chosen PID is a field \(\bbf\), the module \(\Mo\) is in fact an \(\bbf\)-vector space. Since every non-zero element of \(\bbf\) is a unit, the torsion submodule of \(\Mo\) is trivial and \(\Mo\) is completely determined by its rank.

To the best of our knowledge, there is no structure theorem for \(L\)-fuzzy submodules analogous to that for crisp modules, but we can still use existing results to distinguish them. By \Cref{prop:level_iso}, isomorphic \(L\)-fuzzy submodules define isomorphic cuts at every \(\ell \in L\). In particular, let \(\mu \in \calfm(\Mo,L)\) and \(\nu \in \calfm(\No,L)\) be two \(L\)-fuzzy submodules. If there exists some \(\ell \in L\) such that \(\mu^{\ge \ell}\) and \(\nu^{\ge \ell}\) have different Betti numbers or different torsion coefficients, then \Cref{thm:modules_structure} implies that \(\mu^{\ge \ell} \not\cong \nu^{\ge \ell}\), and \Cref{prop:level_iso} then guarantees that \(\mu \not\cong \nu\). 

Focusing only on the Betti numbers, any \(\mu \in \calfm(\Mo,L)\) defines a contravariant functor \(\rank\!\cut(\mu): L \to \bbz_{\ge 0}\), given by \(\ell \mapsto \rank(\mu^{\ge \ell})\), from the CDL \((L,\leq)\) to the set of non-negative integers \((\bbz_{\ge 0},\le)\), both viewed as categories with arrows defined by the order relation. Our discussion shows that the contravariant functor \(\rank\!\cut(\mu)\) is invariant under isomorphisms and therefore provides a practical tool to distinguish non-isomorphic \(L\)-fuzzy submodules.

\section{Simplicial complexes and L-fuzzy subcomplexes}
\label{sec:lfuzzysubcomplexes}

To define simplicial homology, we first introduce the geometric structures on which it is built, namely simplicial complexes, which model topological spaces as a collection of elementary pieces arranged in a controlled way. In this text, we always assume that simplicial complexes are finite.

\begin{definition}[Simplicial complex~\cite{croom2012basic}]
\label{def:simplicial_complex}
Let \(n\in \bbz_{\geq1}\).
\begin{itemize}
\item Given a set \(S=\{v_0,\dots,v_d\}\) of \(d+1\) affinely independent points in \(\bbr^n\), the \emph{\(d\)-simplex} \(\sigma = \langle v_0,\dots,v_d\rangle\) is the convex hull of \(S\). 
The set \(S\) is called the \emph{vertex set of \(\sigma\)} and its elements are called \emph{vertices}. 1-simplices are tipically called \emph{edges} and 2-simplices are tipically called \emph{triangles}.
\item Let \(\sigma_1\) and \(\sigma_2\) be two simplices in \(\bbr^n\). 
We say that \(\sigma_1\) is a \emph{face of} \(\sigma_2\) if the vertex set of \(\sigma_1\) is contained in the vertex set of \(\sigma_2\). If the inclusion is strict, \(\sigma_1\) is a \emph{proper face of} \(\sigma_2\).
\item A \emph{simplicial complex} is a finite family \(\Delta\) of simplicial complexes in \(\bbr^n\) such that all the faces of a simplex in \(\Delta\) also belong to \(\Delta\) and the intersection of any two simplices in \(\Delta\) is either empty or a common face. The union of the vertex sets of all the simplices in \(\Delta\) is called the \emph{vertex set of \(\Delta\)}.
\item The set of \(d\)-simplices of \(\Delta\) is denoted \(\Delta_d\). The \emph{dimension} of \(\Delta\) is the maximum \(d \geq 0\) such that \(\Delta_d \neq \emptyset\).
\end{itemize}
\end{definition}

Simplicial complexes are particularly useful for studying topological spaces from a combinatorial viewpoint. 
If a topological space is homeomorphic to the union of the simplices of a simplicial complex, then many of its topological properties can be analyzed by working with the complex.

\begin{definition}[Crisp subcomplex]
\label{def:subcomplex}
Let \(\Delta\) be a simplicial complex. A non-empty subset \(\Gamma \subseteq \Delta\) is called a \emph{crisp subcomplex} of \(\Delta\) if all the faces of a simplex in \(\Gamma\) also belong to \(\Gamma\) and the intersection of any two simplices in \(\Gamma\) is either empty or a common face. The set of all crisp subcomplexes of \(\Delta\) is denoted by \(\sub(\Delta)\).
\end{definition}

In other words, a crisp subcomplex is a crisp subset \(\Gamma \subseteq \Delta\) that is a simplicial complex itself. The following notion of \(L\)-fuzzy subcomplex is inspired by the fuzzy simplicial sets introduced by Spivak \cite{spivak2009metric}, replacing \(([0,1],\leq)\) by a general CDL.

\begin{definition}[\(L\)-fuzzy subcomplex]
\label{def:lfuzzysubcomplex}
Let \(\Delta\) be a simplicial complex and let \((L,\leq)\) be a CDL. An \(L\)-fuzzy subset \(\mu \in \calfp(\Delta,L)\) is called an \emph{\(L\)-fuzzy subcomplex} of \(\Delta\) if, for every pair of simplices \(\sigma_1, \sigma_2 \in \Delta\) with \(\sigma_1 \subseteq \sigma_2\), we have \(\mu(\sigma_1) \geq \mu(\sigma_2)\). The set of all \(L\)-fuzzy subcomplexes of \(\Delta\) is denoted by \(\calfc(\Delta,L)\).
\end{definition}

This condition implies that the membership value of a simplex is bounded by the values of all its faces. In particular, if \(\sigma = \langle v_0,\ldots,v_d \rangle \in \Delta\), then \(\mu(\sigma) \leq \bigwedge_{i=0}^d \mu(\langle v_i \rangle)\). 

\begin{example}
\label{exa:chroTDA}
In TDA, it is common to study the properties of a point cloud \(X = \{x_i \in \bbr^d \mid i=1,\dots,n\}\) by constructing a simplicial complex \(\Delta\) with vertex set \(X\). Now consider a chromatic dataset \(\mathcal{D} = (X,f)\) where \(f: X \to C=\{c_1,\dots,c_k\}\) is the labeling map. As discussed in \Cref{exa:fdlfuzzy}, \(f\) can be regarded as an \(L\)-fuzzy subset of \(X\), where \((L,\le)=\fdl(c_1,\dots,c_k)\). Given a simplicial complex \(\Delta\) built on \(X\), this induces a chromatic \(L\)-fuzzy subcomplex \(\mu \in \calfc(\Delta,L)\) defined by
\[
\mu(\langle x_0,\dots,x_n\rangle) = \bigwedge_{j=0}^{n} f(x_j).
\]
The standard pipeline of chromatic TDA partitions \(X\) into subsets \(X_1,\dots,X_k\), where \(X_j = \{x_i \in X \mid f(x_i)=c_j\}\), builds simplicial complexes on each subset, and analyzes the relationships among them. Nevertheless, the chromatic \(L\)-fuzzy subcomplex \(\mu \in \calfc(\Delta,L)\) defined above allows us to study \(\mathcal{D} = (X,f)\) with just one combinatorial structure.
\end{example}

We now state some basic properties of \(L\)-fuzzy subcomplexes.

\begin{proposition}
\label{prop:subcomplexesproperties}
Let \(\Delta\) be a simplicial complex and \((L,\leq)\) a CDL.
\begin{enumerate}
\item Let \(\{\mu_i \mid i \in I\} \subseteq \calfc(\Delta,L)\). Then, both \(\mu_{\wedge} = \bigcap_{i\in I} \mu_i\) and \(\mu_{\vee} = \bigcup_{i\in I} \mu_i\) belong to \(\calfc(\Delta,L)\).
\item For any \(\mu \in \calfc(\Delta,L)\) and any \(\ell \in L\), the upper level set \(\mu^{\geq \ell} \subseteq \Delta\) is either empty or a crisp subcomplex of \(\Delta\).
\item For any \(\mu \in \calfc(\Delta,L)\), the support \(\mu^*\) is a crisp subcomplex of \(\Delta\).
\end{enumerate}
\end{proposition}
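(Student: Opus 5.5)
We prove the three items in turn, mirroring the proof of \Cref{prop:submodules_properties}. The only structural facts needed are the monotonicity condition of \Cref{def:lfuzzysubcomplex} and the observation that closure under faces is the only nontrivial requirement for a subcomplex. Indeed, for any non-empty \(\Gamma \subseteq \Delta\), the condition that two simplices meet in the empty set or a common face is inherited from \(\Delta\), since it is a property of the simplices themselves.

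\textbf{Item 1.} Let \(\sigma_1 \subseteq \sigma_2\) be simplices of \(\Delta\). For each \(i \in I\) we have \(\mu_i(\sigma_1) \geq \mu_i(\sigma_2)\). Taking meets over \(i\) gives
\[
\mu_\wedge(\sigma_1) = \bigwedge_i \mu_i(\sigma_1) \geq \bigwedge_i \mu_i(\sigma_2) = \mu_\wedge(\sigma_2),
\]
because meets are monotone. Concretely, \(\bigwedge_i \mu_i(\sigma_2) \leq \mu_i(\sigma_2) \leq \mu_i(\sigma_1)\) for every \(i\), so \(\bigwedge_i \mu_i(\sigma_2)\) is a lower bound of the family \(\{\mu_i(\sigma_1)\}\). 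The same argument with joins applies to \(\mu_\vee\): each \(\mu_i(\sigma_2) \leq \mu_i(\sigma_1) \leq \bigvee_j \mu_j(\sigma_1)\), hence \(\mu_\vee(\sigma_2) \leq \mu_\vee(\sigma_1)\). Unlike the submodule case, no distributivity is required, which is why unions also work here.

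\textbf{Item 2.} Assume \(\mu^{\geq \ell} \neq \emptyset\). Let \(\sigma_2 \in \mu^{\geq \ell}\) and let \(\sigma_1\) be a face of \(\sigma_2\). Then \(\mu(\sigma_1) \geq \mu(\sigma_2) \geq \ell\), so \(\sigma_1 \in \mu^{\geq \ell}\). Together with the inherited intersection property noted above, this shows \(\mu^{\geq \ell}\) is a crisp subcomplex.

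\textbf{Item 3.} Take \(\sigma_2 \in \mu^*\), so \(\mu(\sigma_2) > 0\), and let \(\sigma_1\) be a face of \(\sigma_2\). If \(\mu(\sigma_1) = 0\), then \(0 \geq \mu(\sigma_2)\), forcing \(\mu(\sigma_2) = 0\), a contradiction. Hence \(\sigma_1 \in \mu^*\). Note that this argument does not need \(0\) to be meet-prime, in contrast with \Cref{prop:meet-prime}.

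The one delicate point is non-emptiness, since \Cref{def:subcomplex} requires a subcomplex to be non-empty. Item 2 handles this explicitly by allowing the empty case. For Item 3, I would either adopt the convention that the empty family counts as the empty subcomplex, or note that \(\mu^* = \emptyset\) exactly when \(\mu \equiv 0\) and treat that case as degenerate, phrasing it as in Item 2. Apart from this bookkeeping, every step is a one-line monotonicity check.
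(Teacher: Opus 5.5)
Your proof is correct. Items~1 and~2 follow the paper's argument essentially line by line. The only cosmetic difference is in item~2: the paper verifies the intersection condition explicitly, via \(\mu(\sigma_1\cap\sigma_2)\ge\mu(\sigma_1)\ge\ell\), whereas you note once that it is inherited from \(\Delta\) as soon as face-closure holds. These amount to the same thing.

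Item~3 is where the routes differ. The paper deduces it from the first two items through the identity \(\mu^*=\bigcup_{\ell>0}\mu^{\geq \ell}\), so the support is a union of face-closed cuts. You instead check face-closure directly by the contrapositive \(\mu(\sigma_1)=0\Rightarrow \mu(\sigma_2)\le 0\). Your version is more self-contained. It needs neither that identity nor the fact that a union of face-closed sets is face-closed. Item~1 supplies the latter only after translating cuts into \(\{0,1\}\)-valued \(L\)-fuzzy subcomplexes, since it is stated for unions of \(L\)-fuzzy subcomplexes.

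Your non-emptiness caveat is well founded. For \(\mu\equiv 0\) the support is empty, and so it is not a crisp subcomplex under \Cref{def:subcomplex}. Item~3 therefore really needs the same ``either empty or'' qualification as item~2, and the paper's union argument in fact only delivers that weaker statement.

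One small correction to a side remark. You say unions fail for \(L\)-fuzzy submodules because distributivity is needed there, but the intersection proof for submodules does not use distributivity either. The actual reason is that the condition \(\mu(m_1+m_2)\ge\mu(m_1)\wedge\mu(m_2)\) involves two different elements. It therefore produces uncontrolled cross terms \(\mu_i(m_1)\wedge\mu_j(m_2)\) with \(i\neq j\). The subcomplex condition compares single values, so it is preserved by arbitrary joins.
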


\begin{proof}
We prove each statement in turn.
\begin{enumerate}
\item Let \(\sigma_1 \subseteq \sigma_2\) in \(\Delta\). Since each \(\mu_i\) is an \(L\)-fuzzy subcomplex, we have 
\(\mu_{\vee}(\sigma_1) \geq \mu_i(\sigma_1) \geq \mu_i(\sigma_2) \geq \mu_{\wedge}(\sigma_2)\) for all \(i \in I\). On one hand this implies \(\mu_{\wedge}(\sigma_1) =\bigwedge_{i \in I} \mu_i(\sigma_1) \geq \mu_{\wedge}(\sigma_2)\), which proves \(\mu_{\wedge} \in \calfc(\Delta,L)\). On the other hand this implies \(\mu_{\vee}(\sigma_1) \geq  \bigvee_{i \in I} \mu_i(\sigma_2)=\mu_{\vee}(\sigma_2)\), which proves \(\mu_{\vee} \in \calfc(\Delta,L)\). 
\item If \(\mu(\sigma) \not \geq \ell\) for all \(\sigma \in \Delta\), then \(\mu^{\geq \ell} = \emptyset\). Otherwise, let \(\sigma_1 \in \mu^{\geq \ell}\), so \(\mu(\sigma_1) \geq \ell\) and take any face \(\sigma_2 \subseteq \sigma_1\). Since \(\mu \in \calfc(\Delta,L)\), we have \(\mu(\sigma_2) \geq \mu(\sigma_1) \geq \ell\), so \(\sigma_2 \in \mu^{\geq \ell}\). Consider now two simplices \(\sigma_1, \sigma_2 \in \mu^{\geq \ell}\). When \(\sigma_1 \cap \sigma_2\) is not empty, it is a common face in \(\Delta\) so \(\mu(\sigma_1 \cap \sigma_2) \geq \mu(\sigma_1) \geq \ell\) and therefore \(\sigma_1 \cap \sigma_2 \in \mu^{\geq \ell}\). Hence \(\mu^{\geq \ell}\) is a crisp subcomplex of \(\Delta\).
\item This follows directly from the two previous items because \(\mu^*=\mu^{>0}=\bigcup_{\ell>0} \mu^{\geq \ell}\).
\qedhere
\end{enumerate}
\end{proof}

Repeating the argument of \Cref{sec:lfuzzysubmodules} for \(L\)-fuzzy submodules, the contravariant functor \(\cut(\mu): L \to \calp(\Delta)\) associated with \(\mu \in \calfc(\Delta,L)\) factors through the category \((\mathrm{Sub}(\Delta),\subseteq)\) of subcomplexes of \(\Delta\), with morphisms restricted to inclusions. That is, there exists a functor \(g: L \to \mathrm{Sub}(\Delta)\) such that \(\cut(\mu)= i \circ g\), where \(i: \mathrm{Sub}(\Delta) \to \calp(\Delta)\) is the inclusion functor. In particular, \(\cut(\mu)\) may be identified with \(g\), and thus regarded as a functor \(\cut(\mu): L \to \sub(\Delta)\).

We have discussed in \Cref{exa:chroTDA} how \(L\)-fuzzy subcomplexes can be applied to model chromatic datasets. Now, we study the relation between \(L\)-fuzzy subcomplexes and \emph{filtrations}, one of the most studied objects in TDA.

\begin{definition}[Filtrations and decreasing filtrations]
\label{def:filtration}
Let \((P, \leq)\) be a poset and \((\SpCpx,\subset)\) the category of simplicial complexes (not necessarily finite) with morphisms restricted to inclusions \footnote{Notice that one could also consider the category of regular complexes. Here we restrict to the category of simplicial complexes for simplicity and in order to be consistent with existing literature~\cite{spivak2009metric}.}. 
A \emph{filtration} over \(P\) is a covariant functor $F\colon P \rightarrow \SpCpx$, and a \emph{decreasing filtration} over \(P\) is a contravariant functor $G\colon P \rightarrow \SpCpx$.
\end{definition}

In other words, a filtration is an increasing sequence of simplicial complexes indexed by \(P\). Indeed, if $p_1\leq p_2$, it holds that \(F(p_1) \subseteq F(p_2)\). On the other hand, a decreasing filtration satisfies that \(G(p_1) \supseteq G(p_2)\) whenever \(p_1 \leq p_2\).
Given a filtration \(F\) over \(P\), let \(\Sigma_F=\bigcup_{p \in P}F(p)\) be the simplicial complex that contains the whole filtration. In the remainder of this section, we discuss that any such filtration can be modeled as an \(L\)-fuzzy subcomplex of \(\Sigma_F\).

\begin{definition}[Up-sets]
\label{def:upset}
Let \((P,\leq)\) be a poset. A subset \(S \subseteq P\) is called an \emph{up-set} if for all \(p,q \in P\), whenever \(p \in S\) and \(p \le q\), then \(q \in S\). The set of all up-sets of \(P\) is denoted by \(P_\uparrow\).
\end{definition}

The filters \(P^{\geq p}\) defined in \Cref{def:Lfilter} are examples of up-sets. In fact, the set \(\{P^{\geq p} \mid p \in P\}\) is contained in \(P_\uparrow\), but both sets do not coincide when \(P\) is not totally ordered.

\begin{proposition}
\label{prop:upsetcdl}
The poset \((P_\uparrow, \subset)\) is a CDL where the joins correspond to unions, meets correspond to intersection and the least and greatest elements are \(\emptyset\) and \(P\) respectively.
\end{proposition}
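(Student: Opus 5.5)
The plan is to verify the three clauses of the definition of a CDL directly for \((P_\uparrow,\subseteq)\). For joins, take any family \(\mathcal{S}=\{S_j \mid j\in J\}\subseteq P_\uparrow\). I will first check that \(\bigcup_j S_j\) is again an up-set: if \(p\in S_j\) for some \(j\) and \(p\le q\), then \(q\in S_j\). It is clearly an upper bound of the family under inclusion. It is also the least upper bound, since any up-set containing every \(S_j\) contains their union. The meet is handled in the same way with \(\bigcap_j S_j\): if \(p\) lies in every \(S_j\) and \(p\le q\), then \(q\) lies in every \(S_j\). Uniqueness of joins and meets is automatic from antisymmetry of \(\subseteq\). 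For the empty family, the union is \(\emptyset\) and the intersection, taken inside \(P\), is \(P\); both are up-sets. Hence the least element is \(\emptyset\) and the greatest is \(P\), as claimed.

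The remaining, and only nontrivial, step is complete distributivity. The key observation I will use is that joins and meets in \(P_\uparrow\) are computed exactly as in the power set \(\calp(P)\). Consequently, both distributive identities reduce to the corresponding set-theoretic identities for a doubly indexed family \(\{S_{ij}\}\):
\[
\bigcap_{i\in I}\bigcup_{j\in J} S_{ij}=\bigcup_{f\in J^I}\bigcap_{i\in I} S_{i f(i)},
\qquad
\bigcup_{i\in I}\bigcap_{j\in J} S_{ij}=\bigcap_{f\in J^I}\bigcup_{i\in I} S_{i f(i)}.
\]

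I will prove these identities elementwise.

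\emph{First identity.} A point \(x\) lies in the left side if and only if for every \(i\) there is some \(j\) with \(x\in S_{ij}\). Choosing such a \(j=f(i)\) for each \(i\), which uses the axiom of choice when \(I\) is infinite, gives \(f\in J^I\) with \(x\in\bigcap_i S_{if(i)}\). The converse direction is immediate.

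\emph{Second identity.} The inclusion \(\subseteq\) is immediate: if \(x\in\bigcap_j S_{i_0 j}\), then \(x\in S_{i_0 f(i_0)}\) for every \(f\). For \(\supseteq\), I argue by contrapositive. Suppose \(x\) is not in the left side; then for every \(i\) some \(j\) has \(x\notin S_{ij}\). Choosing such a \(j=f(i)\) yields an \(f\) with \(x\notin\bigcup_i S_{if(i)}\), so \(x\) is not in the right side.

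Edge cases with \(I\) or \(J\) empty need a brief check against the conventions (empty union is \(\emptyset\), empty intersection is \(P\), and \(J^{\emptyset}\) has exactly one element). I expect the main obstacle, such as it is, to be keeping the quantifier and choice-function bookkeeping straight in the second identity. Everything else is routine.
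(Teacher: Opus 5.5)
Your proof is correct and follows the paper's route: you show that up-sets are closed under arbitrary unions and intersections, so joins and meets in \(P_\uparrow\) agree with those in \(\calp(P)\), and complete distributivity is inherited from the power set. The only difference is that the paper cites Davey--Priestley (Theorem 10.24) for the complete distributivity of \((\calp(P),\subseteq)\), whereas you verify the two set identities directly, including the empty-index cases, which makes your argument self-contained.
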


\begin{proof}
The set \(P_\uparrow\) is a subset of \(\calp(P)\), and it is proved in \cite[Theorem 10.24]{davey2002introduction} that \((\calp(P), \subset)\) is a CDL where the joins correspond to unions and meets to intersections. Then, it remains to prove that \(P_\uparrow\) is closed under arbitrary unions and intersections. Given an arbitrary collection of up-sets \(\{S_i \in P_\uparrow \mid i \in I\}\), we claim that \(\bigcup_{i \in I} S_i\) is an up-set. Indeed, if \(p \in \bigcup_{i \in I}S_i\), then \(p \in S_i\) for some \(i \in I\). Since \(S_i\) is an upset, for any \(q \in P\) with \(p \le q\) we have \(q \in S_i\), and therefore \(q \in \bigcup_{i \in I}S_i\). We claim that \(\bigcap_{i \in I}S_i\) is an up-set too. Indeed, if \(p \in \bigcup_{i \in I}S_i\), then \(p \in S_i\) for all \(i \in I\). Since each \(S_i\) is an upset, for any \(q \in P\) with \(p \le q\) we have \(q \in S_i\), and therefore \(q \in \bigcap_{i \in I}S_i\). The up-sets \(\emptyset\) and \(P\) are trivially the least and greatest elements of \(P_\uparrow\).   
\end{proof}

We now prove a result connecting decreasing filtrations over a CDL and \(L\)-fuzzy subcomplexes.

\begin{proposition}\label{prop:filtration-fuzzy-characterisation}
Let \((L,\leq)\) be a CDL, let \(M\colon L\rightarrow \SpCpx\) be a decreasing filtration and denote $\Sigma_M=\bigcup_{\ell\in L}M(\ell)$. 
There exists \(\mu \in \calfc(\Sigma_M,L)\) such that $M(\ell) = \mu^{\geq \ell}$ for all $\ell \in L$ if and only if $M(\bigvee S)=\bigcap_{\ell\in S} M(\ell)$ for any subset $S \subset L$.
\end{proposition}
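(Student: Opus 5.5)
The forward implication follows at once from \Cref{prop:subsetsproperties}. If \(\mu \in \calfc(\Sigma_M,L)\) satisfies \(M(\ell)=\mu^{\geq \ell}\) for every \(\ell\), then item 2 of that proposition gives, for any \(S\subseteq L\),
\[
M\bigl(\textstyle\bigvee S\bigr)=\mu^{\geq \bigvee S}=\textstyle\bigcap_{\ell\in S}\mu^{\geq \ell}=\textstyle\bigcap_{\ell \in S}M(\ell).
\]
The intersection runs over subsets of \(\Sigma_M\). In the case \(S=\emptyset\) the empty intersection is read as \(\Sigma_M\), which matches \(M(0)=\mu^{\geq 0}=\Sigma_M\).

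For the converse, I will define the candidate membership function directly:
\[
\mu(\sigma)=\textstyle\bigvee\{\ell\in L \mid \sigma\in M(\ell)\}, \qquad \sigma \in \Sigma_M .
\]
The plan has three steps.

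\textbf{Step 1: \(\mu\) is an \(L\)-fuzzy subcomplex.} Let \(\sigma_1\subseteq\sigma_2\) in \(\Sigma_M\). Whenever \(\sigma_2\in M(\ell)\), the face \(\sigma_1\) also lies in \(M(\ell)\), because \(M(\ell)\) is a simplicial complex. So the index set defining \(\mu(\sigma_2)\) is contained in the one defining \(\mu(\sigma_1)\), and therefore \(\mu(\sigma_1)\geq\mu(\sigma_2)\).

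\textbf{Step 2: \(M(\ell)\subseteq\mu^{\geq\ell}\).} If \(\sigma\in M(\ell)\), then \(\ell\) belongs to the set whose join is \(\mu(\sigma)\), so \(\mu(\sigma)\geq \ell\).

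\textbf{Step 3: \(\mu^{\geq\ell}\subseteq M(\ell)\).} This is the step that actually uses the hypothesis. Fix \(\sigma\) and set \(S_\sigma=\{\ell'\in L\mid \sigma\in M(\ell')\}\). The hypothesis gives \(M(\bigvee S_\sigma)=\bigcap_{\ell'\in S_\sigma}M(\ell')\), which contains \(\sigma\). Hence \(\sigma\in M(\mu(\sigma))\), so the join defining \(\mu(\sigma)\) is in fact attained. Now suppose \(\mu(\sigma)\geq \ell\). Because \(M\) is contravariant, \(M(\mu(\sigma))\subseteq M(\ell)\), and so \(\sigma\in M(\ell)\).

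A few edge cases need care, and I expect them to be the main (if small) obstacle. The key one is that \(S_\sigma\) is never empty: every \(\sigma \in \Sigma_M\) lies in some \(M(\ell)\) by the definition of \(\Sigma_M\). This guarantees the intersection in Step 3 is taken over a nonempty family and genuinely contains \(\sigma\).

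It also helps to observe that the hypothesis with \(S=\emptyset\) forces \(M(0)=\Sigma_M\), provided the empty intersection is interpreted inside \(\Sigma_M\). Monotonicity gives the same conclusion anyway, since \(0\leq\ell\) for every \(\ell\) implies \(M(\ell)\subseteq M(0)\).

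Finally, some \(M(\ell)\) may be empty, while \(\mu^{\geq\ell}\) is allowed to be empty by \Cref{prop:subcomplexesproperties}. This causes no problem, because the argument compares the two sets elementwise rather than as complexes.
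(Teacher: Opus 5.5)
Your proposal is correct and matches the paper's proof essentially step for step. The forward direction goes through item 2 of \Cref{prop:subsetsproperties}, and the converse defines $\mu(\sigma)=\bigvee\{\ell\in L \mid \sigma\in M(\ell)\}$, applies the hypothesis to that index set to get $\sigma\in M(\mu(\sigma))$, and concludes by contravariance. Your edge-case remarks (nonemptiness of $S_\sigma$, the reading of the empty intersection, and $M(0)=\Sigma_M$) are sound and only make explicit what the paper leaves implicit.
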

\begin{proof}
Assume that there exists \(\mu \in \calfc(\Sigma_M,L)\) such that $M(\ell) = \mu^{\geq \ell}$ for all $\ell \in L$. 
By \Cref{prop:subsetsproperties}, for any \(S \subseteq L\) we have \(\mu^{\geq \bigvee S} = \bigcap_{\ell \in S}\mu^{\geq \ell}\), and by hypothesis this is equivalent to \(M(\bigvee S) = \bigcap_{\ell \in S} M(\ell)\).

Conversely, assume that \(M(\bigvee S) = \bigcap_{\ell \in S} M(\ell)\) for every subset \(S \subseteq L\).  
For each simplex \(\sigma \in \Sigma_M\), define the set \(L_{\sigma} = \{c \in L \mid \sigma \in M(c)\}\), and consider the map \(\mu_M : \Sigma_M \to L\) given by \(\mu_M(\sigma) = \bigvee L_{\sigma}\). Given two simplices $\sigma_1 \subset \sigma_2$ from $\Sigma_M$, we have that \(L_{\sigma_1} \supseteq L_{\sigma_2}\) and therefore $\mu_M(\sigma_1) \geq \mu_M(\sigma_2)$. Thus, $\mu_M \in \calfc(\Sigma_M,L)$. It remains to show that $M(\ell) = \mu_M^{\geq \ell}$ for all $\ell \in L$. 

Given $\sigma \in M(\ell)$, we have that \(\ell \in L_{\sigma}\). Then $\mu_M(\sigma)\geq \ell$, which implies $\sigma \in \mu_M^{\geq \ell}$.
Given $\sigma \in \mu_M^{\geq \ell}$, we have that $\mu_M(\sigma)\geq \ell$. Now, by hypothesis, 
\(\sigma \in \bigcap_{c \in L_{\sigma}} M(c)=M(\bigvee L_{\sigma})=M(\mu_M(\sigma))\). Since \(M\) is a decreasing filtration and $\mu_M(\sigma)\geq \ell$, we have that \(M(\mu_M(\sigma))\subseteq M(\ell)\) and therefore \(\sigma \in M(\ell)\), completing the proof.
\end{proof}

Consider again a filtration over a poset $F\colon P\rightarrow \SpCpx$.
From $F$, we can define a decreasing filtration over the CDL of up-sets $M_F\colon P_\uparrow \rightarrow \SpCpx$ given by $M_F(A) = \bigcap_{p \in A}F(p)$ for all $A \in P_\uparrow$.
It is indeed a decreasing filtration because, for any pair of up-sets $A,B \in P_\uparrow$ with $A \subset B$, it holds that $M_F(A) \supset M_F(B)$.
In particular, it follows that $M_F(P ^{\geq p} ) = F(p)$ for all $p \in P$.
This construction allows to factor the filtration $F\colon P \rightarrow \SpCpx$ as the composition of two contravariant functors $F=M_F \circ \iota $, where $\iota \colon (P, \leq)\rightarrow (P_\uparrow, \subset)$ is given by the assignment $p\mapsto P^{\geq p}$.
Finally, note that $\Sigma_F = \bigcup_{p \in P} F(p)= \bigcup_{A \in P_\uparrow} M_F(A) = \Sigma_{M_F}$.
Then, we have the following result.

\begin{proposition}\label{prop:any-filtration-is-fuzzy}
Let $F\colon P\rightarrow \SpCpx$ be a filtration and let $\Sigma_F=\bigcup_{p \in P}F(p)$. 
There exists \(\mu_F \in \calfc(\Sigma_F,P_\uparrow)\) such that 
$\mu_F^{\supseteq A} =M_F(A)$ for all $A \in P_\uparrow$. 
\end{proposition}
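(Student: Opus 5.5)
We plan to deduce the statement from \Cref{prop:filtration-fuzzy-characterisation}, applied to the CDL \((P_\uparrow,\subset)\) of \Cref{prop:upsetcdl} and the decreasing filtration \(M_F\colon P_\uparrow\to\SpCpx\). In this lattice the order is inclusion, so the cut \(\mu^{\geq A}\) is exactly \(\mu^{\supseteq A}\), and \(\Sigma_{M_F}=\Sigma_F\) was already observed before the statement. It therefore suffices to verify the hypothesis of \Cref{prop:filtration-fuzzy-characterisation}: for every family \(\mathcal{S}\subseteq P_\uparrow\) we need \(M_F(\bigvee\mathcal{S})=\bigcap_{A\in\mathcal{S}}M_F(A)\).

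The key step is this verification. By \Cref{prop:upsetcdl}, joins in \(P_\uparrow\) are unions, so \(\bigvee\mathcal{S}=\bigcup_{A\in\mathcal{S}}A\). Then, by definition of \(M_F\),
\[
M_F\Bigl(\bigcup_{A\in\mathcal{S}}A\Bigr)=\bigcap_{p\in\bigcup_{A\in\mathcal{S}}A}F(p)=\bigcap_{A\in\mathcal{S}}\ \bigcap_{p\in A}F(p)=\bigcap_{A\in\mathcal{S}}M_F(A),
\]
which is just reindexing an intersection. We also need the edge case \(\mathcal{S}=\emptyset\). There \(\bigvee\emptyset=\emptyset\), and \(M_F(\emptyset)\) is the empty intersection, which should be read as the ambient complex \(\Sigma_F\). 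The right-hand side is the empty intersection too, interpreted the same way, so the two sides agree. With this convention \(M_F(\emptyset)=\Sigma_F\) is a simplicial complex, and \(M_F\) is genuinely a decreasing filtration.

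Having checked the hypothesis, \Cref{prop:filtration-fuzzy-characterisation} yields \(\mu_F\in\calfc(\Sigma_F,P_\uparrow)\) with \(\mu_F^{\supseteq A}=M_F(A)\) for all \(A\in P_\uparrow\). Explicitly, \(\mu_F(\sigma)=\bigcup\{A\in P_\uparrow\mid \sigma\in M_F(A)\}\). We can add the remark that, since \(M_F(P^{\geq p})=F(p)\), this gives \(\sigma\in F(p)\) iff \(P^{\geq p}\subseteq\mu_F(\sigma)\), i.e.\ \(\mu_F(\sigma)=\{p\in P\mid \sigma\in F(p)\}\).

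The main obstacle is not the lattice identity but this bookkeeping: fixing the convention that intersections are taken inside \(\Sigma_F\), and checking that the sets \(M_F(A)\) are simplicial complexes. Each \(M_F(A)\) is closed under faces because it is an intersection of subcomplexes of \(\Sigma_F\), and it may be empty, which is permitted in \(\SpCpx\) as used here.
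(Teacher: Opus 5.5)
Your proposal is correct and follows the paper's proof: you verify the join-to-intersection hypothesis of \Cref{prop:filtration-fuzzy-characterisation} for $M_F$ on $(P_\uparrow,\subset)$ by reindexing the intersection (joins are unions), and then invoke that proposition. Two of your points are correct additions that the paper leaves implicit: reading empty intersections as $\Sigma_F$ to handle the empty family, and the explicit formula $\mu_F(\sigma)=\{p\in P\mid \sigma\in F(p)\}$, which holds because this set is an up-set by covariance of $F$.
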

\begin{proof}
First, we prove that $M_F(\bigvee S)=\bigcap_{A\in S} M_F(A)$ for any subset $S \subset P_\uparrow$, recalling that in the CDL \((P_\uparrow,\subset)\) the join \(\bigvee S\) is equal to \(\bigcup_{A \in S} A\). This is quite direct, because:
\[
\bigcap_{A \in S} M_F(A) = \bigcap_{A \in S}\bigcap_{p \in A}F(p) = \bigcap_{p \in \bigvee S}F(p) = M_F\bigg(\bigvee S\bigg).
\]
Thus, by \Cref{prop:filtration-fuzzy-characterisation} there exists \(\mu_F \in \calfc(\Sigma_{M_F},P_\uparrow) = \calfc(\Sigma_{F},P_\uparrow)\) such that $\mu_F^{\supseteq A} =M_F(A)$ for all $A \in P_\uparrow$.
\end{proof}
In particular, the \(L\)-fuzzy subcomplex \(\mu_F \in \calfc(\Sigma_{F},P_\uparrow)\) satisfies that $\mu_F^{\supseteq P^{\geq p}}=F(p)$ for all $p \in P$. 

We have seen that many constructions that arise naturally in TDA can be interpreted as \(L\)-fuzzy subcomplexes. In the next section, we introduce a new approach to defining a homology theory on these objects.

\section{Simplicial homology and L-Fuzzy simplicial homology}
\label{sec:fuzhom}

Simplicial homology is an algebraic tool that assigns to a simplicial complex a sequence of \(\bbd\)-modules (being \(\bbd\) a PID), capturing information about the connectivity and overall shape of the underlying topological space. In this section, we  introduce our proposed definition of \(L\)-fuzzy simplicial homology, which assigns to an \(L\)-fuzzy subcomplex a sequence of \(L\)-fuzzy submodules that complement the topological information given by homology \(\bbd\)-modules of simplicial complexes.

\begin{definition}[Oriented simplices \cite{croom2012basic}]
\label{def:oriented}
Let \(\Delta\) be a simplicial complex, and consider an order in its vertex set. Given a \(d\)-simplex \(\sigma =\langle v_0,\dots,v_d\rangle\) in \(\Delta\), any ordering of its vertices is called an \emph{oriented simplex} (for example \([v_0, \dots, v_d]\)). The oriented simplex \([v_0, \dots, v_d]\) is said to be \emph{positively oriented} if the vertices can be ordered with an even permutation or \emph{negatively oriented} otherwise. 
\end{definition}

For any \(d\)-simplex, there are \((d\!+\!1)!\) possible oriented simplices, one for each permutation of its vertices. This orientation induces an equivalence relation on the set of oriented simplices:  
\[
[v_0, \dots, v_d] \sim [v_{\pi(0)}, \dots, v_{\pi(d)}],  \text{if } \pi \text{ is even}.
\]
Hence, the oriented simplices of a given simplex form two equivalence classes, corresponding to the two possible orientations (positive and negative). 
For example, given a 2-simplex \(\langle v_0, v_1, v_2 \rangle\), we have 
\([v_0, v_1, v_2]  \not \sim [v_1, v_0, v_2]\) but
\([v_0, v_1, v_2] \sim [v_1, v_2, v_0]\).
The \(0\)-simplices are always positively oriented.

\begin{definition}[Crisp module of \(d\)-chains]
\label{def:chain}
Let \(\Delta\) be a simplicial complex and let \(\bbd\) be a PID. For each integer \(d \geq 0\), we define the \(\bbd\)-module of \(d\)-chains \(\Co_d\) of \(\Delta\) as the free \(\bbd\)-module generated by the positively oriented \(d\)-simplices of \(\Delta\). 
Simplices that are oriented negatively are embedded in \(\Co_d\) by setting 
\(
[v_{\pi(0)}, \dots, v_{\pi(d)}] \coloneqq -[v_0, \dots, v_d]
\) whenever \(\pi\) is odd.
When \(\Delta_d = \emptyset\), then \(\Co_d = \{0\}\) is the trivial \(\bbd\)-module. Similarly, we define \(\Co_d = \{0\}\) for \(d<0\). The elements of \(\Co_d\) are called \emph{\(d\)-chains}.
\end{definition}

Assume that \(\Delta_d\) contains \(n_d\) \(d\)-simplices. Then the set \(\E_d = \{\sigma^{d}_{1}, \dots, \sigma^{d}_{n_d}\}\) of positively oriented \(d\)-simplices of \(\Delta\) forms a basis for \(\Co_d\), and each \(d\)-chain \(c \in C_d\) can be written uniquely as a linear combination:
\[
c = \sum_{i=1}^{n_d} c_i \sigma^{d}_{i}, \quad \text{with } c_i \in \bbd.
\]
The vector of coefficients \(c^{\Delta} = (c_1, \dots, c_{n_d})' \in \bbd^{n_d}\) 
(where \( (\;)'\)  denotes transpose)
represents the coordinates of \(c\) with respect to the basis \(\E_d\).

\begin{definition}[Boundary operator, \(d\)-cycles and \(d\)-boundaries]
\label{def:boundary}
Let \(\Delta\) be a simplicial complex, \(\bbd\) a PID and let \(\Co_d\) be the \(\bbd\)-module of \(d\)-chains of \(\Delta\). 
\begin{itemize}
\item The \emph{\(d\)-th boundary operator} is the homomorphism of \(\bbd\)-modules \(\partial_d : \Co_d \to \Co_{d-1}\) that acts on each \(\sigma^{d}_{i} \in E^{\Delta}_{d}\) by:
\[
\partial_d(\sigma^{d}_i)=\partial_d([v_{i_0},\ldots,v_{i_d}]) := \sum_{j=0}^d (-1)^j [v_{i_0}, \dots, \hat{v}_{i_j}, \dots, v_{i_d}],
\]
where \(\hat{v}_i\) indicates that the vertex \(v_i\) is omitted. The \((d\!-\!1)\)-chain \(\partial_d(c) \in \Co_{d-1}\) is called the \emph{boundary} of \(c\).
\item The set \(\Zo_d = \ker(\partial_d) \subseteq \Co_d\) is called the \emph{submodule of \(d\)-cycles}, and its elements are called \emph{\(d\)-cycles}.
\item The set \(\Bo_d = \im(\partial_{d+1}) \subseteq \Co_d\) is called the \emph{submodule of \(d\)-boundaries}, and its elements are called \emph{\(d\)-boundaries}.
\end{itemize}
\end{definition}

This homomorphism maps each positively oriented \(d\)-simplex to a signed sum of its \((d\!-\!1)\)-dimensional faces, and extends linearly to all of \(\Co_d\). A \(d\)-cycle is a \(d\)-chain with trivial boundary, and a \(d\)-boundary is a \(d\)-chain that arises as the boundary of a \((d\!+\!1)\)-chain. The boundary operators satisfy that \(\partial_d \circ \partial_{d+1} \equiv 0\) for all 
\(d \in \bbz\)
(see \cite[Chapter 7]{rotman2013introduction} for a proof), meaning that \(\Bo_d \subseteq \Zo_d\) and the following sequence forms a chain complex of \(\bbd\)-modules. 
\[
\mathcal{C}(\Delta): \cdots \longrightarrow \Co_{d+1}\xrightarrow{\partial_{d+1}} \Co_d \xrightarrow{\partial_d} \Co_{d-1} \longrightarrow \cdots
\]

\begin{definition}[Crisp simplicial homology]
\label{def:homology}
Let \(\Delta\) be a simplicial complex and \(\bbd\) a PID. The \emph{\(d\)-homology of \(\Delta\)} is defined as the quotient \(\bbd\)-module:
\[
\Ho_d = \Zo_d/\Bo_d = \ker(\partial_d)/\operatorname{im}(\partial_{d+1}).
\]
The elements in \(\Ho_d\) are called \emph{\(d\)-homology classes}.
\end{definition}

Given a \(d\)-cycle \(h \in \Zo_d\), the coset \([h] = h + \Bo_d \in \Ho_d\) is called the \emph{\(d\)-homology class of \(h\)}. The \(\bbd\)-module \(\Ho_d\) captures the \(d\)-dimensional topological features (or ``holes'') of the simplicial complex \(\Delta\). For example, each class in \(\Ho_0\) corresponds to a connected component of \(\Delta\). Each class in \(\Ho_1\) represents a loop, that is, a \(1\)-cycle which is not the boundary of any collection of \(2\)-simplices. More generally, \(\Ho_d\) detects \(d\)-dimensional voids in the simplicial complex. The homology of \(\Delta\) depends on the choice of \(\mathbb{D}\), but it is independent of the ordering of the vertices of \(\Delta\). To avoid ambiguity, we always specify the coefficient PID under consideration. 

In summary, given a simplicial complex \(\Delta\) we define the \(\bbd\)-module \(\Co_d\) of \(d\)-chains and obtain the submodules of \(d\)-cycles \(\Zo_d\) and \(d\)-boundaries \(\Bo_d\) as the kernel and image of the boundary operator \(\partial\), respectively. The \(d\)-homology module is then defined as the quotient \(\Ho_d = \Zo_d / \Bo_d\). 

Now, given an \(L\)-fuzzy subcomplex \(\mu \in \calfc(\Delta,L)\), we define an \(L\)-fuzzy submodule of \(d\)-chains \(\kappa_d \in \calfm(\Co_d,L)\), and then construct two \(L\)-fuzzy submodules \(\zeta_d \in \calfm(\Zo_d,L)\) and \(\beta_d \in \calfm(\Bo_d,L)\) via the kernel and image of the boundary operator. Our proposed definition of
\(L\)-fuzzy simplicial homology is then given by the quotient \(\eta_d = \zeta_d / \beta_d\). 
Observe that our proposed definition is quite natural, as it mirrors this pipeline applying the definitions and results given in \Cref{sec:lfuzzysubmodules,sec:lfuzzysubcomplexes}.
We now explain in detail the definition and the computation of such \(L\)-fuzzy submodules.

Recall that an \(L\)-fuzzy subcomplex \(\mu \in \calfc(\Delta,L)\) is a map on the \(d\)-simplices without orientation. However, abusing of notation, for a positively oriented \(d\)-simplex \(\sigma^{d}_{i} = [v_0,\ldots,v_d] \in E^{\Delta}_d\) we write \(\mu(\sigma^{d}_i)\) instead of \(\mu(\langle v_0,\ldots,v_d \rangle)\). 

\begin{definition}[\(L\)-fuzzy \(d\)-chains, \(d\)-cycles, and \(d\)-boundaries]
\label{def:chainsubmodules}
Let \(\Delta\) be a simplicial complex, let \(\Co_d\) be the \(\bbd\)-module of \(d\)-chains of \(\Delta\)
and let \(\mu \in \calfc(\Delta,L)\). 
\begin{itemize}
\item The map \(\delta_d = \bigcup_{i=1}^{n_d} \{\sigma^{d}_i\}_{\mu(\sigma^{d}_i)} \in \calfp(\Co_d,L)\) is called the \emph{\(L\)-fuzzy subset of \(d\)-simplices of \(\mu\)}.
\item The map \(\kappa_d = \langle \delta_d \rangle \in \calfm(\Co_d,L)\) is called the \emph{\(L\)-fuzzy submodule of \(d\)-chains of \(\mu\)}.
\item The map \(\zeta_d = \kappa_d \cap \partial_d^{-1}(\{0\}_1) \in \calfm(\Co_d,L)\) is called the \emph{\(L\)-fuzzy submodule of \(d\)-cycles of \(\mu\)}.
\item The map \(\beta_d = \kappa_d \cap \partial_{d+1}(\kappa_{d+1}) \in \calfm(\Co_d,L)\) is called the \emph{\(L\)-fuzzy submodule of \(d\)-boundaries of \(\mu\)}.
\end{itemize}
\end{definition}

We claim in this definition that \(\kappa_d\), \(\zeta_d\), and \(\beta_d\) are \(L\)-fuzzy submodules of \(\Co_d\). Indeed, by \Cref{def:generatedsubmodule}, \(\langle \delta_d\rangle\) is always an \(L\)-fuzzy submodule; by \Cref{prop:extension_submodules}, the image and preimage of an \(L\)-fuzzy submodule under a module homomorphism are again \(L\)-fuzzy submodules; and by \Cref{prop:submodules_properties}, the intersection of two \(L\)-fuzzy submodules is also an \(L\)-fuzzy submodule. 
Applying \Cref{def:fuzzy_operations} to \(\partial_d^{-1}(\{0\}_1)\), it follows that \(\partial_d^{-1}(\{0\}_1)(c)=1\) if \(c \in \Zo_d\) and \(\partial_d^{-1}(\{0\}_1)(c)=0\) otherwise. Then, \(\zeta_d\) could be equivalently defined as \(\zeta_d = \kappa_d \cap (\Zo_d)_1\).

These \(L\)-fuzzy submodules play the roles of the crisp chain, cycle, and boundary modules, respectively: \(\kappa_d\) corresponds to \(\Co_d\), \(\zeta_d\) to \(\Zo_d\), and \(\beta_d\) to \(\Bo_d\). 
We now establish some basic properties of these \(L\)-fuzzy submodules.

\begin{proposition}
\label{prop:chains_submodules_properties}
Let \(\Delta\) be a simplicial complex, let \(\Co_d\) be the \(\bbd\)-module of \(d\)-chains of \(\Delta\)
and let \(\mu \in \calfc(\Delta,L)\). Then:
\begin{enumerate}[label=(\roman*)]
\item\label{item:d-chain-fuzzy} For any \(d\)-chain \(c = \sum_{i=1}^{n_d} c_i \sigma^{d}_i \in \Co_d\), we have
\(\kappa_d(c) = \bigwedge_{i=1,\ldots,n_d}^{c_i \neq 0} \delta_d(\sigma^{d}_i) = \bigwedge_{i=1,\ldots,n_d}^{c_i \neq 0}  \mu(\sigma^{d}_i)\).
\item\label{item:supportZB} \(\zeta_d^* \subseteq \Zo_d\) and \(\beta_d^* \subseteq \Bo_d\).
\item\label{item:beta-zeta-kappa} \(\beta_d \subseteq \zeta_d \subseteq \kappa_d\) and, for all \(z\in \Zo_d\), it holds that   \(\zeta_d(z)= \kappa_d(z)\).
\item \label{item:meet-prime} If \(\mu^* = \Delta\) and 0 is meet-prime in \(L\), then \(\kappa_d^* = \Co_d\), \(\zeta_d^* = \Zo_d\), and \(\beta_d^* = \Bo_d\).
\end{enumerate}
\end{proposition}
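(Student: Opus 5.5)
We treat the four items in order, since each one uses the previous ones.

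For \ref{item:d-chain-fuzzy} we apply \Cref{cor:singletons_generating} directly. The set \(\E_d\) is a basis of \(\Co_d\), so it is linearly independent and \(\langle \E_d\rangle = \Co_d\). Moreover, \(\delta_d\) is exactly the union of \(L\)-fuzzy singletons \(\bigcup_i \{\sigma^d_i\}_{\mu(\sigma^d_i)}\). The corollary then gives \(\kappa_d(c)=\bigwedge_{c_i\neq 0}\mu(\sigma^d_i)\) for every \(c\in\Co_d\), and \(\delta_d(\sigma^d_i)=\mu(\sigma^d_i)\) is the definition of \(\delta_d\). One technicality needs care. \Cref{def:fuzzysingleton} requires non-zero values, so if some \(\mu(\sigma^d_i)=0\) we interpret \(\{\sigma^d_i\}_0\) as the zero fuzzy set. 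The proof of the corollary still goes through, because any combination using that simplex contributes a meet that is \(\le 0\). For \(c=0\) the meet is empty and gives \(1\), as it should.

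For \ref{item:supportZB}, first note that \(\zeta_d(c)=\kappa_d(c)\wedge \partial_d^{-1}(\{0\}_1)(c)\), and the second factor is \(0\) when \(c\notin\Zo_d\). Hence \(\zeta_d(c)\neq 0\) forces \(c\in \Zo_d\). Similarly, \(\beta_d(c)\le \partial_{d+1}(\kappa_{d+1})(c)=\bigvee\{\kappa_{d+1}(b)\mid \partial_{d+1}b=c\}\). If \(c\notin \Bo_d\), this join is over the empty set and equals \(0\). Therefore \(\beta_d(c)\neq0\) implies \(c\in\Bo_d\).

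For \ref{item:beta-zeta-kappa}, the inclusion \(\zeta_d\subseteq\kappa_d\) is immediate from the definition as an intersection. The equality \(\zeta_d(z)=\kappa_d(z)\wedge 1=\kappa_d(z)\) for \(z\in\Zo_d\) is equally immediate. For \(\beta_d\subseteq\zeta_d\), take any \(c\). If \(c\in\Bo_d\subseteq\Zo_d\), then \(\beta_d(c)\le\kappa_d(c)=\zeta_d(c)\). Otherwise \(\beta_d(c)=0\) by \ref{item:supportZB}, and the inclusion holds trivially.

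For \ref{item:meet-prime}, which I expect to be the main obstacle, we assume \(\mu^*=\Delta\) and that \(0\) is meet-prime. Then every \(\mu(\sigma^d_i)>0\). A finite meet of non-zero elements is non-zero, by induction on the meet-prime property (if \(a\wedge b=0\le 0\) then \(a\le0\) or \(b\le 0\)). So \ref{item:d-chain-fuzzy} gives \(\kappa_d(c)>0\) for all \(c\), that is, \(\kappa_d^*=\Co_d\). Combining this with \ref{item:beta-zeta-kappa}, for \(z\in\Zo_d\) we get \(\zeta_d(z)=\kappa_d(z)\neq 0\), so \(\Zo_d\subseteq\zeta_d^*\); together with \ref{item:supportZB} this yields \(\zeta_d^*=\Zo_d\).

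For the boundaries, let \(c\in\Bo_d\) and choose \(b\in\Co_{d+1}\) with \(\partial_{d+1}b=c\). Then \(\partial_{d+1}(\kappa_{d+1})(c)\ge\kappa_{d+1}(b)>0\), using \(\kappa_{d+1}^*=\Co_{d+1}\). Hence \(\beta_d(c)\ge \kappa_d(c)\wedge\kappa_{d+1}(b)\), which is non-zero again by meet-primality of \(0\). The delicate point here is the order of the steps: the join defining the image must be bounded below by a single non-zero term before taking the meet with \(\kappa_d(c)\), because a join of terms is not controlled by meet-primality. Done this way, \(\Bo_d\subseteq\beta_d^*\), and together with \ref{item:supportZB} we conclude \(\beta_d^*=\Bo_d\).
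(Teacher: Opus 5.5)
Your proposal is correct and follows the paper's proof essentially step by step: (i) from \Cref{cor:singletons_generating} applied to the basis \(\E_d\), (ii) via the preimage of \(\{0\}_1\) and the empty join \(\bigvee\emptyset=0\), (iii) by splitting on whether \(c\in\Bo_d\), and (iv) by meet-primality of \(0\) together with a chosen preimage \(b\) of \(c\) under \(\partial_{d+1}\). You also handle simplices with \(\mu(\sigma^d_i)=0\), where \Cref{def:fuzzysingleton} formally requires a non-zero value; this is a small refinement that the paper passes over.
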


\begin{proof}
We prove all the statements one by one.
\begin{enumerate}[label=(\roman*)]
\item Since \(\E_d=\{\sigma^{d}_{1}, \dots, \sigma^{d}_{n_d}\}\) is a basis of \(\Co_d\), it is linearly independent and this follows from \Cref{cor:singletons_generating}.
\item To prove \(\zeta_d^* \subseteq \Zo_d\), let \(c\in \Co_d\) be such that \(c \notin \Zo_d\). Then \(\partial_d(c) \neq 0\), so \(\zeta_d(c) = \kappa_d(c) \wedge \partial_d^{-1}(\{0\}_1)(c) = \kappa_d(c) \wedge (\{0\}_1)(\partial_d(c)) = 0.\) Thus, $c \notin \zeta_d^*$. To prove \(\beta_d^* \subseteq \Bo_d\), suppose that \(c \notin \Bo_d = \im(\partial_{d+1})\). Then, 
\(\partial_{d+1}(\kappa_{d+1})(c) = \bigvee \{\kappa_{d+1}(a) \mid \partial_{d+1}(a)=c\} = \bigvee \emptyset = 0\) and $\beta_d(c)=0$. Thus, $c \notin \beta_d^*$.
\item By definition, we already have $\zeta_d, \beta_d \subseteq \kappa_d$. To prove $\beta_d \subseteq \zeta_d$, consider any $c \in \Co_d$. If $c \notin \Bo_d$, then $\beta_d(c) = 0 \leq \zeta_d(c)$. If  on the contrary $c \in \Bo_d$, then $c \in \Zo_d$ as well. Then, \(\zeta_d(c) = \kappa_d(c) \wedge \partial_d^{-1}(0_1)(c) = \kappa_d(c)\), and
\(\beta_d(c) = \kappa_d(c) \wedge \partial_{d+1}(\kappa_{d+1})(c) \leq \kappa_d(c) = \zeta_d(c)\).
\item From \ref{item:d-chain-fuzzy}, we have that \(\kappa_d(c) = \bigwedge \{\mu(\sigma^{d}_{i}) \mid i = 1, \dots, n_d,\ c_i \neq 0\}\). Since \(\mu^* = \Delta\), we have \(\mu(\sigma^{d}_{i}) > 0\) for all \(i = 1, \dots, n_d\). As \(0\) is meet-prime in \(L\), the meet of finitely many non-zero elements is also non-zero. Therefore, \(\kappa_d(c) > 0\) for all \(c \in C_d\), and consequently \(\kappa_d^* = C_d\).

The inclusions \(\zeta_d^* \subseteq \Zo_d\) and \(\beta_d^* \subseteq \Bo_d\) were already established in~\ref{item:supportZB}. We now prove the reverse inclusions. Let \(c \in \Zo_d\). By~\ref{item:beta-zeta-kappa}, \(\zeta_d(c) = \kappa_d(c) > 0\), hence \(c \in \zeta_d^*\) and \(\Zo_d \subseteq \zeta_d^*\). Similarly, if \(c \in \Bo_d\), then \(c = \partial_{d+1}(c')\) for some \(c' \in \Co_{d+1}\). Since \(\kappa_{d+1}^* = \Co_{d+1}\), we have \(\kappa_{d+1}(c') > 0\), and \(\partial_{d+1}(\kappa_{d+1})(c) = \bigvee \{\kappa_{d+1}(z) \mid \partial_{d+1}(z)=c\} \geq \kappa_{d+1}(c') > 0\). Moreover, \(\kappa_d(c) > 0\), and since \(0\) is meet-prime in \(L\), it follows that \(\beta_d(c) = \kappa_d(c) \wedge \partial_{d+1}(\kappa_{d+1})(c) > 0\). Thus, \(c \in \beta_d^*\) and \(\Bo_d \subseteq \beta_d^*\). \qedhere
\end{enumerate}
\end{proof}

We are now ready to extend the classical notion of homology to the \(L\)-fuzzy setting.
\begin{definition}[L-fuzzy simplicial homology]
\label{def:eta}
Let $\Delta$ be a simplicial complex and let $\mu \in \calfc(\Delta,L)$.  
We define the \emph{\(L\)-fuzzy \(d\)-homology of \(\mu\)} as the quotient:
\[
\eta_d = \zeta_d/\beta_d
\in \calfm(\langle\zeta_d^*\rangle/\langle\beta^*_d\rangle,L).
\]
\end{definition}

That is, \(\eta_d\) is obtained by applying \Cref{def:lfuzzyquotient} to the \(L\)-fuzzy submodules \(\zeta_d\) and \(\beta_d\). Since this construction depends on their supports, it is natural to analyze the role of simplices \(\sigma \in \Delta\) such that \(\mu(\sigma)=0\). Let \(\Delta'=\mu^*\) denote the support, which is a crisp subcomplex of \(\Delta\) as proved in \Cref{prop:subcomplexesproperties}, and let \(\mu' = \mu|_{\Delta'} \in \calfc(\Delta',L)\). Applying \Cref{def:chain,def:boundary,def:homology} to \(\Delta'\) we can define the \(\bbd\)-modules \(\Co'_d, \Zo'_d, \Bo'_d, \Ho'_d\), and applying \Cref{def:chainsubmodules,def:eta} to \(\mu'\) we can define the \(L\)-fuzzy submodules \(\delta'_d, \kappa'_d, \zeta'_d, \beta'_d, \eta'_d\).

\begin{proposition}
\label{prop:supportcomplex}
Let \(\Delta\) be a simplicial complex, let \(\mu \in \calfc(\Delta,L)\) and let \(\mu' = \mu|_{\Delta'} \in \calfc(\Delta',L)\). 
Then, \(\eta_d=\eta'_d\).
\end{proposition}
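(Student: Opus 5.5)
Both \(\eta_d\) and \(\eta'_d\) are \(L\)-fuzzy submodules of quotient modules, so the plan is to show three things: \(\langle\zeta_d^*\rangle=\langle\zeta_d'^*\rangle\), \(\langle\beta_d^*\rangle=\langle\beta_d'^*\rangle\), and \(\zeta_d,\beta_d\) coincide with \(\zeta'_d,\beta'_d\) when extended by zero. Then the quotient formula of \Cref{def:lfuzzyquotient} gives the same function on the same module, so \(\eta_d=\eta'_d\).

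Throughout, regard \(\Co'_d\) as the submodule of \(\Co_d\) spanned by the positively oriented \(d\)-simplices of \(\Delta'\). This is a direct summand, and \(\partial'_d=\partial_d|_{\Co'_d}\) because \(\Delta'\) is a subcomplex (\Cref{prop:subcomplexesproperties}). For \(\kappa_d\), use \Cref{prop:chains_submodules_properties}\ref{item:d-chain-fuzzy}: \(\kappa_d(c)\) is the meet of \(\mu(\sigma_i)\) over the simplices with \(c_i\neq0\). If some such \(\sigma_i\notin\Delta'\), then \(\mu(\sigma_i)=0\) and \(\kappa_d(c)=0\). Otherwise \(c\in\Co'_d\) and \(\kappa_d(c)=\kappa'_d(c)\). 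So \(\kappa_d\) is \(\kappa'_d\) extended by \(0\) outside \(\Co'_d\).

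For cycles, \(\zeta_d=\kappa_d\cap(\Zo_d)_1\). Since \(\Zo_d\cap\Co'_d=\Zo'_d\), the function \(\zeta_d\) equals \(\zeta'_d\) on \(\Co'_d\) and is \(0\) elsewhere. For boundaries, fix \(c\in\Co_d\). If \(c\notin\Co'_d\), then \(\kappa_d(c)=0\), so \(\beta_d(c)=0\). If \(c\in\Co'_d\), compute
\[
\partial_{d+1}(\kappa_{d+1})(c)=\bigvee\{\kappa_{d+1}(a)\mid \partial_{d+1}(a)=c\}.
\]
Every \(a\notin\Co'_{d+1}\) contributes \(0\), and every \(a\in\Co'_{d+1}\) contributes \(\kappa'_{d+1}(a)\) with \(\partial'_{d+1}(a)=c\). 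Hence the join equals \(\partial'_{d+1}(\kappa'_{d+1})(c)\), and \(\beta_d(c)=\beta'_d(c)\). In particular \(\zeta_d^*=\zeta_d'^*\) and \(\beta_d^*=\beta_d'^*\) as subsets of \(\Co'_d\). The generated submodules therefore agree, and the ambient quotient modules \(\langle\zeta_d^*\rangle/\langle\beta_d^*\rangle\) are literally the same.

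Finally, for \(m\in\langle\zeta_d^*\rangle\) the coset \([m]\) is the same set in both settings, and
\[
(\zeta_d/\beta_d)([m])=\bigvee_{n\in[m]}\zeta_d(n)=\bigvee_{n\in[m]}\zeta'_d(n)=(\zeta'_d/\beta'_d)([m]).
\]
The middle equality holds because \(\zeta_d\) and \(\zeta'_d\) agree on \(\Co'_d\), and \([m]\subseteq\Co'_d\) since \(m\in\Co'_d\) and \(\langle\beta_d^*\rangle\subseteq\Co'_d\).

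The main obstacle is the bookkeeping of identifications: \(\zeta'_d\) is formally defined on \(\Co'_d\), not on \(\Co_d\), so the claimed equality has to be read through the inclusion \(\Co'_d\hookrightarrow\Co_d\), with zero extension. I will state this convention explicitly at the start. The only nontrivial step is the boundary join, where the decisive fact is that chains with any simplex outside the support have \(\kappa\)-value \(0\).
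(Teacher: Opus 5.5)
Your proposal is correct and follows essentially the same route as the paper. Both regard \(\Co'_d\subseteq\Co_d\) and show that \(\kappa_d,\zeta_d,\beta_d\) restrict to \(\kappa'_d,\zeta'_d,\beta'_d\) on \(\Co'_d\) and vanish outside it, so the supports, the domains \(\langle\zeta_d^*\rangle/\langle\beta_d^*\rangle\), and the quotient values all coincide. Your version is more explicit than the paper's, which simply asserts these restrictions, in particular in checking the join defining \(\partial_{d+1}(\kappa_{d+1})\).
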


\begin{proof}
By construction, \(\Co_d\) is the \(\bbd\)-module generated by the positively oriented \(d\)-simplices of \(\Delta\) while \(\Co'_d\) is generated by those of \(\Delta'\), that is, those with \(\mu(\sigma^d_i)=\delta_d(\sigma^d_i)>0\). Hence, \(\Co'_d= \langle \delta_d^* \rangle \subseteq \Co_d\). 
In particular, \(\delta_d\) vanishes on \(\Co_d \setminus \Co'_d\), \(\delta'_d = \delta_d|_{\Co'_d}\) and \(\delta_d^* = (\delta'_d)^*\). The \(L\)-fuzzy submodules \(\kappa'_d,\zeta'_d,\beta'_d\) can be defined from \(\delta'_d\) as in \Cref{def:chainsubmodules} and it follows that 
\(\kappa'_d = \kappa_d|_{\Co'_d}\), \(\kappa_d^* = (\kappa'_d)^*\),
\(\zeta'_d = \zeta_d|_{\Co'_d}\), \(\zeta_d^* = (\zeta'_d)^*\),
\(\beta'_d = \beta_d|_{\Co'_d}\) and \(\beta_d^* = (\beta'_d)^*\).
Therefore, the maps \(\eta: \langle \zeta_d^* \rangle / \langle \beta_d^* \rangle \to L\) and \(\eta': \langle (\zeta'_d)^* \rangle / \langle (\beta'_d)^* \rangle \to L\) have the same domain and take the same values, so that \(\eta_d=\eta'_d\).
\end{proof}

This result shows that simplices of \(\Delta\) outside the support of \(\mu\) can be discarded without affecting the \(L\)-fuzzy \(d\)-homology submodule \(\eta_d\). This agrees with the interpretation of \(L\)-fuzzy subsets, since, as discussed in \Cref{sec:lfuzzysubsets}, the condition \(\mu(\sigma)=0\) means that \(\sigma\) does not belong to \(\mu\). Therefore, there is no loss of generality in assuming \(\mu^*=\Delta\).

We now verify that \(\eta_d\) extends crisp simplicial homology. To this end, we consider \Cref{def:eta} in the case where the \(L\)-fuzzy subcomplex \(\mu\) takes values in the CDL \((\{0,1\}, \leq)\).

\begin{proposition}
\label{prop:generalize}
Let \(\Delta\) be a simplicial complex and let \(\mu \in \calfc(\Delta,\{0,1\})\) such that \(\mu^*=\Delta\). Then, \(\eta_d \in \calfm(\Ho_d,\{0,1\})\) and \(\eta_d([h])=1\) for all \([h] \in \Ho_d\). 
\end{proposition}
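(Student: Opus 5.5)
The proof reduces to \Cref{prop:chains_submodules_properties}\ref{item:meet-prime} once we check that its hypotheses hold for the Boolean lattice. In \((\{0,1\},\leq)\) the element \(0\) is meet-prime: if \(a\wedge b\leq 0\), then \(a\wedge b=0\), and since the only elements are \(0\) and \(1\), one of \(a,b\) must be \(0\). Together with \(\mu^*=\Delta\), that item gives \(\zeta_d^*=\Zo_d\) and \(\beta_d^*=\Bo_d\). Both are already crisp submodules, so \(\langle\zeta_d^*\rangle=\Zo_d\) and \(\langle\beta_d^*\rangle=\Bo_d\). By \Cref{def:eta,def:lfuzzyquotient}, \(\eta_d\) is therefore an \(\{0,1\}\)-fuzzy submodule of \(\Zo_d/\Bo_d=\Ho_d\), which is the first claim.

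For the values, fix a class \([h]\in\Ho_d\) with \(h\in\Zo_d\). By \Cref{def:lfuzzyquotient},
\[
\eta_d([h])=\bigvee\{\zeta_d(n)\mid n\in h+\Bo_d\}\geq \zeta_d(h).
\]
It therefore suffices to show \(\zeta_d(h)=1\). By \Cref{prop:chains_submodules_properties}\ref{item:beta-zeta-kappa}, \(\zeta_d(h)=\kappa_d(h)\). By item \ref{item:d-chain-fuzzy}, \(\kappa_d(h)\) is the meet of the values \(\mu(\sigma^d_i)\) over the simplices with nonzero coefficient in \(h\).

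Every simplex has \(\mu(\sigma)\neq 0\) because \(\mu^*=\Delta\), and in \(\{0,1\}\) this forces \(\mu(\sigma)=1\). The meet of a finite family of \(1\)'s is \(1\), and this includes the empty meet when \(h=0\). So \(\kappa_d(h)=1\), hence \(\zeta_d(h)=1\), and the join above equals \(1\).

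There is no substantial obstacle here. The only care needed is to confirm that the supports, and hence the domain of \(\eta_d\), are exactly \(\Zo_d\) and \(\Bo_d\) rather than just contained in them. That is precisely what the meet-prime item provides. It is also worth noting the convention \(\bigwedge\emptyset=1\) for the zero chain, which is consistent with \(\kappa_d(0)=1\) holding for any \(L\)-fuzzy submodule.
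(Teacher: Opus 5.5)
Your proof is correct and follows essentially the same route as the paper: both use that \(\mu\equiv 1\) on \(\Delta\), so \(\kappa_d(h)=\zeta_d(h)=1\) for every cycle, and hence \(\eta_d([h])=1\) on \(\Zo_d/\Bo_d=\Ho_d\). The only cosmetic difference is how the supports \(\zeta_d^*=\Zo_d\) and \(\beta_d^*=\Bo_d\) are obtained: you get them from the meet-prime item of \Cref{prop:chains_submodules_properties}, whereas the paper reads them off directly from \(\kappa_d\), \(\zeta_d\), \(\beta_d\) being identically \(1\) on \(\Co_d\), \(\Zo_d\), \(\Bo_d\).
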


\begin{proof}
Since \(\mu^*=\Delta\), then \(\mu'(\sigma)=1\) for all \(\sigma \in \Delta'\) and it follows that \(\kappa_d(c)=1\) for all \(c \in \Co_d\), \(\zeta_d(z)=1\) for all \(z \in \Zo_d\) and \(\beta_d(b)=1\) for all \(b \in \Bo_d\). Then, the domain of \(\eta_d\) is \(\langle\zeta_d^*\rangle/\langle\beta_d^*\rangle= \Zo_d/\Bo_d=\Ho_d\) and \(\eta_d([h])=1\) for all \([h] \in \Ho_d\). 
\end{proof}

This result shows that, when the CDL is \((\{0,1\},\leq)\), then \(\eta_d\) reduces to the constant map with value \(1\) on \(\Ho_d\). Under our interpretation, \(\eta_d([h])=1\) means that \([h]\) belongs to \(\eta_d\). Hence, \(\eta_d\) describes \(\Ho_d\) in terms of \(L\)-fuzzy subsets and therefore \(L\)-fuzzy simplicial homology generalizes crisp simplicial homology. 

\begin{corollary}
\label{cor:eta_value}
Let \(\Delta\) be a simplicial complex, let \((L,\leq)\) be a CDL such that \(0 \in L\) is meet-prime and let \(\mu \in \calfc(\Delta,L)\) such that  \(\mu^* = \Delta\). If \(\Ho_d\) is the \(d\)-homology \(\bbd\)-module of \(\Delta\), then \(\eta_d \in \calfm(\Ho_d,L)\) and, for each \([h] \in \Ho_d\),
\[
\eta_d([h]) =
\bigvee \{ \zeta_d(z) \mid z \in [h] \}
=\bigvee \{ \kappa_d(z) \mid z \in [h]\}.
\]
In particular, $\eta_d^*=\Ho_d$.
\end{corollary}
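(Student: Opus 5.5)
The plan is to unwind \Cref{def:eta} and \Cref{def:lfuzzyquotient} using the support information supplied by \Cref{prop:chains_submodules_properties}. First I identify the domain of \(\eta_d\). Since \(\mu^*=\Delta\) and \(0\) is meet-prime, item~\ref{item:meet-prime} of \Cref{prop:chains_submodules_properties} gives \(\zeta_d^*=\Zo_d\) and \(\beta_d^*=\Bo_d\). Both are already crisp submodules, so \(\langle\zeta_d^*\rangle=\Zo_d\) and \(\langle\beta_d^*\rangle=\Bo_d\). Hence \(\langle\zeta_d^*\rangle/\langle\beta_d^*\rangle=\Zo_d/\Bo_d=\Ho_d\), and \(\eta_d\in\calfm(\Ho_d,L)\) because the quotient is an \(L\)-fuzzy submodule by \Cref{def:lfuzzyquotient}.

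Next come the two formulas. For \([h]\in\Ho_d\), \Cref{def:lfuzzyquotient} gives directly \(\eta_d([h])=\bigvee\{\zeta_d(z)\mid z\in[h]\}\), where the coset is \(h+\Bo_d\). Every element \(z=h+b\) of this coset is a cycle, because \(\Bo_d\subseteq\Zo_d\). Item~\ref{item:beta-zeta-kappa} of \Cref{prop:chains_submodules_properties} then gives \(\zeta_d(z)=\kappa_d(z)\) for each such \(z\), so the two joins coincide term by term.

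Finally, for \(\eta_d^*=\Ho_d\), I pick any \([h]\) and use \(h\) itself as a representative. By item~\ref{item:meet-prime}, \(\kappa_d^*=\Co_d\), so \(\kappa_d(h)>0\). The join is at least this term, hence \(\eta_d([h])\geq\kappa_d(h)>0\), and \([h]\in\eta_d^*\). The reverse inclusion is trivial.

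I do not expect any step to be a genuine obstacle. The only point needing care is making sure the coset in \Cref{def:lfuzzyquotient} is taken modulo \(\langle\beta_d^*\rangle=\Bo_d\) inside \(\langle\zeta_d^*\rangle=\Zo_d\). This matches the homology coset \([h]\), and that identification is what item~\ref{item:meet-prime} secures.
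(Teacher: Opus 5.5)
Your proposal is correct and follows essentially the same route as the paper: you identify the domain as \(\Zo_d/\Bo_d=\Ho_d\) via item~\ref{item:meet-prime} of \Cref{prop:chains_submodules_properties}, read off the join formula from \Cref{def:lfuzzyquotient} together with item~\ref{item:beta-zeta-kappa}, and obtain \(\eta_d^*=\Ho_d\) from the positivity of supports given by item~\ref{item:meet-prime}. Your explicit remark that every element of \(h+\Bo_d\) is a cycle, so that item~\ref{item:beta-zeta-kappa} applies to each term of the join, is a detail the paper leaves implicit.
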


\begin{proof}
Since \(\mu^* = \Delta\) and \(0\) is meet-prime in \(L\), we know by \cref{item:meet-prime} of \Cref{prop:chains_submodules_properties} that \(\zeta_d^* = \Zo_d\) and \(\beta_d^* = \Bo_d\). Since \(\Zo_d\) and \(\Bo_d\) are submodules of \(\Co_d\), it follows that \(\langle \zeta_d^* \rangle = \zeta_d^*= \Zo_d\) and \(\langle \beta_d^* \rangle = \beta_d^*= \Bo_d\). Thus, the domain of \(\eta_d\) is \(\langle\zeta_d^*\rangle/\langle\beta_d^*\rangle= \Zo_d/\Bo_d=\Ho_d\). 
Now, by \cref{item:beta-zeta-kappa} of \Cref{prop:chains_submodules_properties} we know that \(\zeta_d(z)=\kappa_d(z)\) for any \(d\)-cycle \(z \in \Zo_d\).
Hence, applying \Cref{def:lfuzzyquotient} to \(\eta_d = \zeta_d/\beta_d\), we obtain \(\eta_d([h]) = \bigvee \{ \zeta_d(z) \mid z \in [h] \} = \bigvee \{ \kappa_d(z) \mid z \in [h] \}\). Moreover, since \(\zeta_d(z)>0\) for all \(z \in \Zo_d\), it follows that \(\eta_d([h])>0\) for all \([h] \in \Ho_d\).
\end{proof}

If \(0\) is not meet-prime in \(L\), the \(L\)-fuzzy \(d\)-homology submodule \(\eta_d\in \calfm(\langle\zeta_d^*\rangle/\langle\beta^*_d\rangle,L)\) is still well defined, but it is not necessarily true that \(\langle\zeta_d^*\rangle/\langle\beta^*_d\rangle = \Ho_d\), because \(\langle \zeta_d^*\rangle\) may be strictly contained in \(\Zo_d\).
From now on, we always assume that \(\mu^*=\Delta\) and \(0\) is meet-prime in \(L\), so that \(\eta_d \in \calfm(\Ho_d,L)\).

\begin{remark}
\label{rem:Lsets}
Consider the sets \(L(\delta_d),L(\kappa_d),L(\eta_d)  \subseteq L\). 
The set \(L(\delta_d)\setminus\{0\}\) contains all the \(L\)-fuzzy values that the \(d\)-simplices in \(\Delta_d\) can take. 
The set \(L(\kappa_d)\) contains all the \(L\)-fuzzy values that the \(d\)-chains in \(\Co_d\) can take. 
By \Cref{prop:chains_submodules_properties}, it follows that \(L(\kappa_d) = \{\bigwedge S \mid S \subseteq L(\delta_d)\setminus \{0\}\}\).
The set \(L(\eta_d)\) contains all the \(L\)-fuzzy values that the \(d\)-homology classes in \(\Ho_d\) can take. 
Now, by \Cref{cor:eta_value}, it follows that \(L(\eta_d) \subseteq \{\bigvee S \mid S \subseteq L(\kappa_d)\}\).
Note that since \(\Delta_d\) is finite, these three sets are finite too.
When \((L,\leq)\) is totally ordered, we have \(L(\kappa_d) = (L(\delta_d)\setminus\{0\})\cup \{1\}\) and \(L(\eta_d) \subseteq L(\kappa_d)\).
\end{remark}

The definition of \(\eta_d([h])\) given in \Cref{cor:eta_value} is not always practical, as it requires computing \(\kappa_d(z)\) for every \(d\)-cycle \(z \in [h]\). Depending on the chosen PID \(\bbd\), the set \([h] = h + \Bo_d\) may even be infinite. For this reason, we develop in the next section an alternative method for computing \(\eta_d([h])\), which might be more convenient in specific cases. Additionally, we develop a method to compute the crisp submodule \(\eta_d^{\geq \ell}\) for any \(\ell \in L\).

\section{Computation of simplicial homology and L-fuzzy simplicial homology}
\label{sec:computation}

We start this section computing the homology \(\bbd\)-modules of a simplicial complex \(\Delta\).
Let \(t = \max\{d \geq 0 \mid \Delta_t \neq \emptyset\}\) be the dimension of \(\Delta\). For \(d<0\) and \(d>t\) , the \(\bbd\)-modules \(\Co_d\) and \(\Ho_d\) are trivial. For each \(d = 0,\dots,t\), we know from \Cref{thm:modules_structure} that the structure of \(\Ho_d\) is completely determined by its Betti number and torsion coefficients. To compute
them, we focus on the following subsequence of the chain complex \(\calc(\Delta)\):
\[
\{0\}=\Co_{t+1} \xrightarrow{} \Co_t \xrightarrow{} \dots \xrightarrow{} 
\Co_{d+1} \xrightarrow{\partial_{d+1}} \Co_d \xrightarrow{\partial_d} \Co_{d-1}
\xrightarrow{} \dots \xrightarrow{} \Co_0 \xrightarrow{} \Co_{-1} = \{0\}.
\]

For each \(d = 0,\dots,t\), the \(\bbd\)-module \(\Co_d\) is finitely generated by the basis \(E^{\Delta}_{d}\) and is torsion-free. Since the boundary operator \(\partial_d\) is linear, it can be represented by a matrix \(M_d \in \bbd^{n_{d-1} \times n_d}\), whose \(i\)-th column is the coordinate vector of \(\partial_d(\sigma^d_i)\) with respect to the basis \(E^{\Delta}_{d-1}\). With this notation, for any \(d\)-chain \(a \in \Co_d\) we have \(M_d\, a^{\Delta} = (\partial_d(a))^{\Delta}\).
Since \(\Co_{t+1}=\{0\}\) and \(\Co_{-1}=\{0\}\), the boundary operator \(\partial_0\) is represented by the zero matrix \(M_0 \in \bbd^{1 \times n_0}\) and the boundary operator \(\partial_{t+1}\) is represented by the zero matrix \(M_{t+1} \in \bbd^{n_t \times 1}\).

The identity \(\partial_d \circ \partial_{d+1} = 0\) implies that \(M_d \, M_{d+1} = 0\). Thus, the chain complex \(\calc(\Delta)\) can be encoded by a sequence of matrices \(M_0,\dots,M_{t+1}\) satisfying \(M_d \, M_{d+1} = 0\) for all \(d=0,\dots,t\). The Betti numbers and torsion coefficients of each \(\Ho_d\) can be fully determined from these matrices applying the following results.

\begin{theorem}[Smith normal form \cite{jacobson2012basic}]
\label{thm:snf}
Let \(\bbd\) be a PID, and let \(A \in \bbd^{m \times n}\) be an \(m \times n\) matrix with entries in \(\bbd\). Then there exist invertible matrices \(P \in \bbd^{m\times m}\) and \(Q \in \bbd^{n\times n}\) such that \(P\, A \, Q = D\), where \(D \in \bbd^{m \times n}\) is a diagonal matrix of the form \(D = \operatorname{diag}(d_1, \dots, d_r, 0, \dots, 0)\),
where \(r = \rank(A)\), each \(d_i \in \bbd \setminus \{0\}\), and \(d_1 \mid d_2 \mid \cdots \mid d_r\). The matrix \(D\) is called the \emph{Smith normal form of \(A\)}, and the elements \(d_1, \dots, d_r\) are called the \emph{invariant factors of \(A\)}.
\end{theorem}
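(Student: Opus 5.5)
We argue by induction on \(m+n\), using only multiplication by invertible matrices, so every step produces some \(P A Q\) with \(P,Q\) invertible. If \(A=0\) we take \(P,Q\) to be identities. The admissible moves are row and column swaps, adding a \(\bbd\)-multiple of one row (column) to another, multiplying a row (column) by a unit, and one extra move specific to PIDs.

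\emph{Bézout move.} Suppose \(a,b\in\bbd\) lie in the same column (say rows \(1\) and \(i\)) and are not both zero. Let \(g\) generate the ideal \((a,b)\). Then \(g=xa+yb\) for some \(x,y\in\bbd\), and \(a=g a'\), \(b=g b'\) with \(xa'+yb'=1\). Hence
\[
\begin{pmatrix} x & y\\ -b' & a'\end{pmatrix}
\]
has determinant \(1\), so it is invertible over \(\bbd\). Embedding it into rows \(1,i\) of an identity matrix gives an invertible \(P_0\), and \(P_0A\) has entries \(g\) and \(0\) in those two positions. The transposed construction acts on columns.

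\emph{Termination measure.} Since a PID is a UFD, every nonzero \(a\in\bbd\) has a length \(\lambda(a)\), the number of prime factors counted with multiplicity; units have length \(0\). If \(g\) generates \((a,b)\) and \(a\nmid b\), then \(g\) is a proper divisor of \(a\), so \(\lambda(g)<\lambda(a)\). Alternatively, one can appeal to the ascending chain condition on principal ideals.

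\emph{Clearing the first row and column.} Permute a nonzero entry into position \((1,1)\). Apply Bézout moves alternately down the first column and along the first row until both contain only zeros apart from \(a_{11}\). Each move that actually changes \(a_{11}\) strictly lowers \(\lambda(a_{11})\), since the new entry divides the old one. Column moves can reintroduce nonzero entries into the first column, but only finitely many strict decreases of \(\lambda\) are possible. Therefore, after finitely many rounds we reach a block form \(\operatorname{diag}(a_{11},A')\).

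\emph{Divisibility of the pivot.} Next we ensure that \(a_{11}\) divides every entry of \(A'\). If some entry \(a_{ij}\) of \(A'\) is not divisible by \(a_{11}\), add row \(i\) to row \(1\) and repeat the clearing step. The new pivot generates \((a_{11},a_{1j},\dots)\) and is therefore a proper divisor of \(a_{11}\), so \(\lambda\) drops again. This, too, stops after finitely many iterations.

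We then apply the induction hypothesis to \(A'\), obtaining invertible \(P',Q'\) with \(P'A'Q'=\operatorname{diag}(d_2,\dots,d_r,0,\dots)\) and \(d_2\mid\cdots\mid d_r\). Every entry of \(A'\) lies in the ideal \((a_{11})\), and invertible transformations preserve the ideal generated by the entries, so \(a_{11}\mid d_2\). Set \(d_1=a_{11}\) and take
\[
P=\operatorname{diag}(1,P')\,P_{\mathrm{acc}},\qquad Q=Q_{\mathrm{acc}}\,\operatorname{diag}(1,Q'),
\]
where \(P_{\mathrm{acc}}\) and \(Q_{\mathrm{acc}}\) are the products of the moves already applied. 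Finally, \(r=\rank(A)\) because rank is invariant under invertible transformations and the rank of a diagonal matrix is its number of nonzero entries.

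The main obstacle is the termination argument in the clearing and divisibility steps. Over a Euclidean domain one would use the Euclidean norm, but a general PID has none, so the length \(\lambda\) (equivalently, the ascending chain condition) has to replace it. Uniqueness of the \(d_i\) up to units is not asked for in the statement. If needed, it follows from the fact that the gcd of all \(k\times k\) minors is invariant under invertible transformations.
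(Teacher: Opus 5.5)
Your argument is correct. It is essentially the standard constructive proof found in the reference the paper cites, Jacobson: determinant-one Bézout blocks \(\begin{pmatrix} x & y\\ -b' & a'\end{pmatrix}\), with the prime-factor length \(\lambda\) as the termination measure. The paper itself gives no proof beyond that citation.

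One detail needs to be made explicit. When the pivot already divides the entry being cleared, you must use the plain elementary operation (\(x=1\), \(y=0\)). An arbitrary Bézout pair leaves \(\lambda\) unchanged but can refill the row or column you already cleared. For instance, choosing \(x=0\), \(y=1\) every time makes \(\begin{pmatrix}1&1\\0&1\end{pmatrix}\) cycle forever between itself and \(\begin{pmatrix}1&0\\1&1\end{pmatrix}\).
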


\begin{proposition} 
\label{prop:diagonals-product-zero} 
Let \(r_d\) denote the rank of matrix \(M_d\). For each \(d=0,\dots,t+1\), there exists a basis \(E^H_d\) of \(\Co_d\) together with invertible change-of-basis matrices \(\mhd_d\) and \(\mdh_d = (\mhd_d)^{-1}\) such that \(D_d \coloneqq \mhd_d M_d \mdh_d\) has the form \(D_d= [0 \mid D'_d]\), where the first \(r_{d+1}\) columns are zero and \(D'_d\) is a diagonal matrix of rank \(r_d\). Moreover, these matrices satisfy \(D_d \, D_{d+1} = 0\) for each \(d=0,\dots,t\). \end{proposition}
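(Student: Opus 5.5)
The plan is to construct the bases $E^H_d$ inductively from the top dimension downward, using \Cref{thm:snf} at each step and the identity $M_d M_{d+1}=0$ to keep the already-chosen structure compatible. Start at $d=t+1$: $\Co_{t+1}=\{0\}$, so $M_{t+1}$ is the zero matrix and we take $E^H_{t+1}$ trivial. At $d=t$, $r_{t+1}=0$, so we only need a basis of $\Co_t$ in which $M_t$ becomes diagonal in the required columns.

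The core step works for a general $d$, assuming $E^H_{d}$ has already been fixed with two properties. First, its first $r_{d+1}$ vectors span a full-rank submodule of $\Bo_d$; more precisely, they are of the form $d_i b_i$-related generators coming from the Smith form of $M_{d+1}$. Second, the remaining $n_d-r_{d+1}$ vectors complete it to a basis. Since $\partial_d$ vanishes on $\Bo_d$, the matrix of $\partial_d$ in this source basis has its first $r_{d+1}$ columns zero. It remains to transform the last $n_d-r_{d+1}$ columns, which form a block $A$.

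Apply \Cref{thm:snf} to $A$. This gives invertible $P,Q$ with $PAQ=\operatorname{diag}(d_1,\dots,d_{r_d},0,\dots)$. Replace the last $n_d-r_{d+1}$ basis vectors of $\Co_d$ by their images under $Q$, leaving the first $r_{d+1}$ untouched. This is a block-diagonal invertible change and preserves the zero columns. Then take as new basis of $\Co_{d-1}$ the one with change matrix $P^{-1}$. Reordering so that the zero columns of the Smith form come first yields $D_d=[0\mid D'_d]$ with $D'_d$ diagonal of rank $r_d$ (the rank of $A$ equals $\rank M_d=r_d$).

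The main obstacle is compatibility. The basis of $\Co_{d-1}$ just chosen is the target basis for $\partial_d$, but it must also be the source basis for $\partial_{d-1}$ in the next step. For that step to start correctly, its first $r_d$ vectors must lie in $\ker\partial_{d-1}$. In the Smith form, the first $r_d$ target basis vectors $p_i$ satisfy $\partial_d(q_i)=d_i p_i$, where $q_i$ are the columns of the source basis. Hence $d_i\,\partial_{d-1}(p_i)=\partial_{d-1}\partial_d(q_i)=0$. Since $\Co_{d-2}$ is free and $\bbd$ is a domain, $\partial_{d-1}(p_i)=0$. So we order the basis of $\Co_{d-1}$ with these $p_i$ first, and the induction continues.

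This ordering forces the diagonal of $D'_d$ to sit in the rows corresponding to $p_1,\dots,p_{r_d}$, i.e.\ the first rows. Then $D_d$ has nonzero entries only in rows $1..r_d$ and columns $r_{d+1}+1..r_{d+1}+r_d$, and $D_{d-1}$ has zero first $r_d$ columns. The matrix product $D_{d-1}D_d$ therefore pairs the zero columns of $D_{d-1}$ with the only nonzero rows of $D_d$, giving $D_{d-1}D_d=0$. Alternatively, this follows directly from $D_{d-1}D_d=\mhd_{d-1}M_{d-1}M_d\mdh_d=0$, with $\mdh_{d-1}\mhd_{d-1}$ cancelling in the middle because the same basis $E^H_{d-1}$ is used on both sides. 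I would state this second argument as the proof of the final claim, since it is immediate once the bases are shared.
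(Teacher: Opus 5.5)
Your proposal is correct and is essentially the paper's proof. It uses the same top-down induction, applies \Cref{thm:snf} to the block of $\partial_d$ after the first $r_{d+1}$ columns, and extends the column change block-diagonally as $\operatorname{diag}(I_{r_{d+1}},Q'_d)$. It uses the domain property to show the leading target vectors are cycles, which is the paper's step $N_dD_{d+1}=0$. It obtains $D_dD_{d+1}=0$ because later changes never touch the vectors spanning $\im\partial_{d+1}$, which is the paper's check $Q_d^{-1}D_{d+1}=D_{d+1}$.

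One correction to your opening paragraph: the first $r_{d+1}$ vectors of $E^H_d$ are cycles but in general not boundaries, since those with non-unit $d_i$ are exactly the torsion generators $t^d_i$. So the vanishing of the first $r_{d+1}$ columns must come from your later argument $d_i\,\partial_{d-1}(p_i)=\partial_{d-1}\partial_d(q_i)=0$ plus torsion-freeness, not from $\partial_d$ vanishing on $\Bo_d$. Also, no reordering is needed, because the Smith form already has the required layout.
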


\begin{proof}
We construct the matrices \(D_0,\dots,D_{t+1}\) and the corresponding change-of-basis matrices iteratively from the highest dimension downward.  
The base case is for \(d=t+1\). The chain group \(\Co_{t+1} = \{0\}\) has zero boundary, so \(M_{t+1} \in \bbd^{n_t \times 1}\) is already the zero matrix. We define
\[
D_{t+1} = M_{t+1}, \quad P_t = I_{n_t \times n_t}, \quad Q_{t+1} = I_{1 \times 1},
\] 
so that \(P_t \, M_{t+1} \, Q_{t+1} = D_{t+1}\). Its rank is \(r_{t+1} = 0\), and it trivially satisfies the required block form.

Assume by induction that we already have invertible matrices \(P_d \in \bbd^{n_d \times n_d}\) and \(Q_{d+1} \in \bbd^{n_{d+1}\times n_{d+1}}\) such that \(D_{d+1}=P_d M_{d+1}Q_{d+1}\) has been put into the required block form:
\[
D_{d+1} = 
\begin{pmatrix} 0_{n_{d}\times r_{d+2}} & D'_{n_{d} \times (n_{d+1} - r_{d+2})} \end{pmatrix} =
\begin{pmatrix}
0_{r_{d+1} \times r_{d+2}} & \text{diag}_{r_{d+1}\times r_{d+1}} & 0_{r_{d+1} \times (n_{d+1}-r_{d+2}-r_{d+1})} \\
0_{(n_{d}-r_{d+1}) \times r_{d+2}} & 0_{(n_{d}-r_{d+1}) \times r_{d+1}} & 0_{(n_{d}-r_{d+1})\times (n_{d+1}-r_{d+2}-r_{d+1})}
\end{pmatrix}.
\]
We now want to find invertible matrices \(P_{d-1}\) and \(Q_d\) such that \(D_{d} = P_{d-1}\, M_d \, Q_d\) has the required form and \(D_d\, D_{d+1}=0\).  Define \(N_d = M_d \, P_d^{-1} \in \bbd^{n_{d-1} \times n_d}\).
This matrix satisfies that
\[
N_d \, D_{d+1} =
M_d \, P_d^{-1} \, \overbrace{P_d \, M_{d+1} \, Q_{d+1}}^{D_{d+1}} = 
\underbrace{M_d M_{d+1}}_{0} \, Q_{d+1} = 0.
\] 
Since \(D_{d+1}\) has the block form described above and \(N_d \, D_{d+1} = 0\), it follows that the first \(r_{d+1}\) columns of \(N_d\) are zero. We can then split \(N_d\) into blocks:
\[
N_d = \begin{pmatrix} 0_{n_{d-1}\times r_{d+1}} & N'_d \end{pmatrix}, \quad N'_d \in \bbd^{n_{d-1} \times (n_d - r_{d+1})}.
\]

By \Cref{thm:snf}, there exist invertible matrices \(P_{d-1} \in \bbd^{n_{d-1} \times n_{d-1}}\), \(Q'_d \in \bbd^{(n_d - r_{d+1}) \times (n_d - r_{d+1})}\) and a diagonal matrix \(D'_d\) of rank \(r_d\) (called the Smith normal form of \(N'_d\)) such that
\[
P_{d-1} \, N'_d \, Q'_d = D'_d.
\]
We extend the matrix \(Q'_d\) to 
\[
Q_d = \begin{pmatrix} I_{r_{d+1}} & 0 \\ 0 & Q'_d \end{pmatrix} \in \bbd^{n_d \times n_d},
\]
which is invertible. Then, we define \(D_d\) as:
\[
D_d \coloneqq P_{d-1} \, N_d \, Q_d = P_{d-1} \, \begin{pmatrix} 0_{n_{d-1}\times r_{d+1}} & N'_d \end{pmatrix} \, \begin{pmatrix} I_{r_{d+1}} & 0 \\ 0 & Q'_d \end{pmatrix} = \begin{pmatrix} 0_{n_{d-1}\times r_{d+1}} & D'_d \end{pmatrix},
\]
which has the required form. To verify that \(D_d \, D_{d+1}=0\), we check first that \(Q^{-1}_d D_{d+1} = D_{d+1}\). Indeed,
\[
Q^{-1}_{d} \, D_{d+1}=
\begin{pmatrix} I_{r_{d+1}} & 0 \\ 0 & (Q'_{d})^{-1} \end{pmatrix}
\begin{pmatrix}
0 & \mathrm{diag}_{r_{d+1} \times r_{d+1}} & 0 \\
0 & 0 & 0
\end{pmatrix}
=
D_{d+1}.
\] 
Combining everything, we obtain
\begin{align*}
P_{d-1}\cdot N_{d}\cdot D_{d+1} = & \; 
P_{d-1} \cdot 
\overbrace{M_{d} \cdot P^{-1}_{d}}^{N_d}\cdot 
\overbrace{Q_{d} \cdot Q^{-1}_{d}}^{I} \cdot 
\overbrace{P_{d} \cdot M_{d+1} \cdot Q_{d+1}}^{D_{d+1}}
 = 0, \\
D_{d}\cdot D_{d+1} = & 
\underbrace{P_{d-1} \cdot M_{d} \cdot P^{-1}_{d} \cdot Q_{d}}_{D_d}  \cdot 
\underbrace{Q^{-1}_{d} \cdot P_{d} \cdot M_{d+1} \cdot Q_{d+1}}_{D_{d+1}}
 = 0.
\end{align*}
It is proved that \(D_d \, D_{d+1}=0\). We repeat this procedure for \(d = t, t-1, \dots, 0\), each time computing the Smith normal form of \(N'_d\) and extending the invertible matrix \(Q'_d\). At the last step we obtain \(D_0\), completing the transformation of all matrices \(M_0,\dots,M_{t+1}\) into \(D_0,\dots,D_{t+1}\) with the required block-diagonal form. Each transformation is realized by invertible matrices
\[
\mhd_d = Q_d^{-1} \, P_d, \qquad 
\mdh_d = (\mhd_d)^{-1} = P_d^{-1} Q_d,
\] 
so that if \(M_d\) represents \(\partial_d\) in the original bases \(E^\Delta_{d-1}, E^\Delta_d\), then \(D_d\) represents \(\partial_d\) in the new bases \(E^H_{d-1}, E^H_d\).
\end{proof}

\Cref{prop:diagonals-product-zero} provides an iterative method to find new bases for \(\Co_{0}, \dots,\Co_{t}\) such that the boundary operators are represented by new matrices \(D_0,\dots,D_{t+1}\) in a nearly diagonal form. These matrices divide
\(E^{H}_d\) into four groups of generators: 

\begin{itemize}
\item[(U)] \textbf{\(d\)-boundaries:} They are those whose column in \(D_d\) is null and whose row in \(D_{d+1}\) has a unit of \(\bbd\). We denote these generators as 
\(u^d_1,\ldots,u^d_{n_U}\).
\item[(T)] \textbf{\(d\)-cycles that generate torsion homology classes:} They are those whose column in \(D_d\) is null and whose row in \(D_{d+1}\) has a non-zero and non-unit element of \(\bbd\). We denote these generators as 
\(t^{d}_{1},\ldots,t^{d}_{n_T}\) and the numbers in their corresponding rows are 
\(a^d_{1},\ldots,a^d_{n_T}\), which satisfy \(a^d_{1} \mid \ldots \mid a^d_{n_T}\).
\item[(R)] \textbf{\(d\)-chains that are not cycles:} They are those whose column in \(D_d\) is not null. We denote these generators as \(r^d_{1},\ldots,r^d_{n_R}\).
\item[(F)] \textbf{\(d\)-cycles that generate free homology classes:} They are those whose column in \(D_d\) is null and whose row in \(D_{d+1}\) is also null. We denote this generators as
\(f^d_{1},\ldots,f^d_{n_F}\).
\end{itemize}

Clearly, \(n_d = n_{U} + n_{T} + n_{R} + n_{F}\). The reduction process sorts these generators in such a way that at the beginning go the generators of group U, then those of T, then those of R and finally those of F. This division of \(E^{H}_d\) into four groups induces a division of the change-of-basis matrix \(\mdh_d\) into four blocks 
\(\mdh_d = \left(U_d \mid T_d \mid R_d \mid F_d \right)\). Describing \(\Ho_d = \Zo_d /\Bo_d\) is now quite direct using \(E^{H}_d\), because we have:
\begin{align*}
\Co_d = & \langle u^d_1,\ldots,u^d_{n_U}, t^d_1,\ldots,t^d_{n_T}, r^d_1,\ldots,r^d_{n_R}, f^d_1,\ldots,f^d_{n_F} \rangle, \\    
\Zo_d = & \langle u^d_1,\ldots,u^d_{n_U}, t^d_1,\ldots,t^d_{n_T}, f^d_1,\ldots,f^d_{n_F} \rangle, \\  
\Bo_d = & \langle u^d_1,\ldots,u^d_{n_U}, a^{d}_{1}t^{d}_{1},\dots, a^{d}_{n_T}t^{d}_{n_T} \rangle.
\end{align*}
For each generator \(f^d_j\) in group F, the class \([f^d_j]\) generates a free summand of \(\Ho_d\). 
For each generator \(t^d_i\) in group T, the class \([t^d_i]\) generates a torsion summand satisfying \(a^d_i [t^d_i] = 0\).
Therefore, every \(d\)-homology class \(h \in \Ho_d\) admits a unique decomposition
\[
[h] =
\sum_{i=1}^{n_T} \alpha_i [t^d_i]
+
\sum_{j=1}^{n_F} \varphi_j [f^d_j]
\quad \text{with} \;
\alpha_i \in \bbd/(a^d_i), \; \varphi_j \in \bbd, \text{ and we write }
[h] =
\left(
\begin{array}{c}
\alpha_1\\
\vdots \\
\alpha_{n_T} \\
\varphi_1 \\
\vdots \\
\varphi_{n_F}
\end{array}
\right)
=
\left(
\begin{array}{c}
\alpha\\
\varphi
\end{array}
\right)
\]
to identify \([h]\) with its coordinates.
Therefore, we have the isomorphism
\[
\Ho_d \cong 
\left(\bigoplus_{i=1}^{n_T} \bbd/(a^d_i)\right)
\oplus
\bbd^{n_F} = \left\{(\alpha_{1},\dots,\alpha_{n_T}, \varphi_{1},\dots,\varphi_{n_F})' 
\middle| \; \alpha_{i} \in \bbd/(a^{d}_{i}), \varphi_{j} \in \bbd\right\}.
\]
The Betti number of \(\Ho_d\) is \(n_F\) and its torsion coefficients are \(a^{d}_{1},\ldots,a^{d}_{n_T}\) (which satisfy the divisibility condition). Thus, we recover the structure of $\Ho_d$ as presented in \Cref{thm:modules_structure}. We now describe in detail how to get the coordinates of any \(d\)-chain in both reference systems. 

Any \(d\)-chain \(c \in \Co_d\) can be decomposed uniquely as: 
\[
c = \sum_{i=1}^{N_U} \upsilon_i \cdot u^{d}_{i}
+\sum_{i=1}^{N_T} \tau_i \cdot t^{d}_{i}
+\sum_{i=1}^{N_R} \rho_i  \cdot r^{d}_{i}
+\sum_{i=1}^{N_F} \varphi_i  \cdot f^{d}_{i}, \quad \text{with} \; \upsilon_i, \tau_i, \rho_i, \varphi_i \in \bbd, \text{ and we write }
c^H = 
\left(
\begin{array}{c}
\upsilon_c \\ \tau_c \\ \rho_c \\ \varphi_c
\end{array}
\right)
\]
for some \(\upsilon_c \in \bbd^{n_U}\), \(\tau_c \in \bbd^{n_T}\), \(\rho_c \in \bbd^{n_R}\) and \(\varphi_c \in \bbd^{n_F}\). The vector of coordinates of \(c\) with respect to \(\E_d\) is 
\[
c^{\Delta}=\mdh_d c^{H} =
\left( U_d \mid T_d \mid R_d \mid F_d \right) \cdot
\left(
\begin{array}{c}
\upsilon_c \\ \tau_c \\ \rho_c \\ \varphi_c
\end{array}
\right) = U_d \upsilon_c + T_d \tau_c + R_d \rho_c + F_d \varphi_c.
\]
In this case, we write \(c \approx(\upsilon_c , \tau_c , \rho_c , \varphi_c)\) to identify \(c\) with its coordinates.

Any \(d\)-cycle \(z \in \Zo_d\) is generated by the generators in \(E^H_d\) but those in group R. Then, there exist \(\upsilon_z \in \bbd^{n_U}\), \(\tau_z \in \bbd^{n_T}\) and \(\varphi_z \in \bbd^{n_F}\) such that the vector of coordinates of \(z\) with respect to \(E^{H}_d\) has the form 
\[
z^H=
\left(
\begin{array}{c}
\upsilon_z \\ \tau_z \\ 0 \\ \varphi_z
\end{array}
\right),
\quad \text{ and therefore } \quad
z^{\Delta}=\mdh_d z^{H} = U_d \upsilon_z + T_d \tau_z + F_d \varphi_z.
\]
In this case, we write \(z \approx(\upsilon_z , \tau_z, \varphi_z)\) to identify \(z\) with its coordinates.

Any \(d\)-boundary \(b \in \Bo_d\) is generated by the generators in group U and the generators in group T multiplied by their torsion coefficients. If we define the matrix \(A_d = \operatorname{diag}(a^{d}_{1},\ldots,a^{d}_{n_T}) \in \bbd^{n_T \times n_T}\) containing the torsion coefficients of \(\Ho_d\), there exist \(\upsilon_b \in \bbd^{n_U}\) and \(\tau_b \in \bbd^{n_T}\) such that the vector of coordinates of \(b\) with respect to \(E^H_d\) has the form 
\[
b^H =
\left(
\begin{array}{c}
\upsilon_b \\ A_d \tau_b \\ 0 \\ 0
\end{array}
\right),
\quad \text{ and therefore } \quad
b^{\Delta} = \mdh_d b^H = U_d \upsilon_b + T_dA_d \tau_b.
\]
In this case, we write \(b \approx(\upsilon_b , \tau_b)\) to identify \(b\) with its coordinates.

Consider now a \(d\)-homology class \([h] = h + \Bo_d \in \Ho_d\) and a representative \(d\)-cycle \(z \in [h]\). Since \(z \in \Zo_d\), there exist vectors \(\upsilon_z,\tau_z,\varphi_z\) such that \(z \approx (\upsilon_z,\tau_z,\varphi_z)\). Because \(z \in h +\Bo_d\), there exists \(b \in \Bo_d\) such that \(z = h+b\) and there exist vectors \(\upsilon_b,\tau_b\) such that \(b \approx (\upsilon_b, \tau_b)\). Then, we may also write \(z \approx h + (\upsilon_b, \tau_b)\). 
Moreover, if \(z \approx (\upsilon_z,\tau_z,\varphi_z)\), we have that the coordinates of \([h]\) are
\[
[h] = [z] =
\left(
\begin{array}{c}
\pi^{d}_{1}(\tau_1)\\
\vdots \\
\pi^{d}_{n_T}(\tau_{n_T}) \\
\varphi_1 \\
\vdots \\
\varphi_{n_F}
\end{array}
\right)
=
\left(
\begin{array}{c}
\pi^d(\tau_z)\\
\varphi_z
\end{array}
\right),
\]
where \(\pi^{d}_{i}: \bbd \to \bbd/(a^{d}_{i})\) is the natural projection associated with
the torsion coefficient \(a^{d}_{i}\). It follows directly that these coordinates are independent of the chosen representative \(z \in [h]\).

It may seem that this exposition is overly detailed for such a basic matter. However, we have included it to clarify how the change of basis between \(E^{H}_d\) and \(\E_d\) is performed, and to introduce the notation used in next results.

Let \([h] \in \Ho_d\) be a \(d\)-homology class represented by the \(d\)-cycle \(h \in \Zo_d\). The set of \(d\)-cycles in \([h] = h +\Bo_d\) is easily described in terms of the basis \(E^{H}_d\), but \cref{item:d-chain-fuzzy} of \Cref{prop:chains_submodules_properties} shows that for each \(d\)-cycle \(z \in [h]\) the value \(\kappa_d(z) \in L\) is easier to compute having its coordinates \(z^{\Delta}\) in terms of \(\E_d\).
Because of that, we need to use the change-of-basis matrix \(\mdh_d = \left(U_d \mid T_d \mid R_d \mid F_d \right)\) described in \Cref{prop:diagonals-product-zero}.
 
Given a matrix \(M\), we denote by \(M_I\) the submatrix of \(M\) consisting of the rows indexed by \(I\).

\begin{definition}[Constraint system]
\label{def:constraint_system}
Given a \(d\)-chain \(c \in \Co_d\) and a subset 
\(I\) of the index set  \(\{1, \ldots, n_d\}\) of the oriented \(d\)-simplices in \(\E_d\), we define the following system of linear diophantine equations:
\[
S(c,I): \quad (U_{d} \mid T_{d} A_d)_I \cdot
\left(
\begin{array}{c}
\upsilon \\
\tau
\end{array}
\right)
= -c^{\Delta}_I,
\]
where \(A_d = \text{diag}(a^{d}_{1},\dots,a^{d}_{N_T})\) is  a diagonal matrix with the torsion coefficients of \(\Ho_d\), \(\upsilon \in \bbd^{n_U}\) and \(\tau \in \bbd^{n_T}\).
\end{definition}

These constraint systems are a key tool for computing the value \(\eta_d([h])\) for any \([h] \in \Ho_d\), as well as the cuts \(\eta_d^{\geq \ell}\) for \(\ell \in L\), as is shown in \Cref{thm:hdl_properties}. We now establish two technical lemmas to simplify that proof.

\begin{lemma}
\label{lem:shi_welldefined}
Let \([h] \in \Ho_d\) and let \(z \in [h]\). Then, \(S(h,I)\) is solvable if and only if \(S(z,I)\) is solvable.
\end{lemma}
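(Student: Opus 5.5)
The idea is to move between the two systems by translating the unknowns by the boundary coordinates. Since \(z \in [h] = h + \Bo_d\), there is a boundary \(b \in \Bo_d\) with \(z = h + b\). From the discussion preceding \Cref{def:constraint_system}, there are vectors \(\upsilon_b \in \bbd^{n_U}\) and \(\tau_b \in \bbd^{n_T}\) such that
\[
b^{\Delta} = U_d \upsilon_b + T_d A_d \tau_b = (U_d \mid T_d A_d)\begin{pmatrix}\upsilon_b \\ \tau_b\end{pmatrix}.
\]
Coordinates are linear, so \(z^{\Delta} = h^{\Delta} + b^{\Delta}\). Restricting to the rows indexed by \(I\) then gives
\[
z^{\Delta}_I = h^{\Delta}_I + (U_d \mid T_d A_d)_I\begin{pmatrix}\upsilon_b \\ \tau_b\end{pmatrix}.
\]

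For the forward direction, suppose \((\upsilon,\tau)\) solves \(S(h,I)\), so that \((U_d \mid T_d A_d)_I (\upsilon,\tau)' = -h^{\Delta}_I\). Set \((\upsilon',\tau') = (\upsilon - \upsilon_b,\ \tau - \tau_b)\). Because taking the submatrix of rows \(I\) commutes with matrix-vector multiplication, linearity gives
\[
(U_d \mid T_d A_d)_I (\upsilon',\tau')' = -h^{\Delta}_I - (U_d \mid T_d A_d)_I(\upsilon_b,\tau_b)' = -z^{\Delta}_I .
\]
Hence \(S(z,I)\) is solvable.

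The converse follows by symmetry. Since \(h = z + (-b)\) and \(-b \approx (-\upsilon_b, -\tau_b)\) is again a boundary, the same argument applies with the roles of \(h\) and \(z\) exchanged. Concretely, the solution is \((\upsilon'+\upsilon_b,\ \tau'+\tau_b)\).

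There is no real obstacle. The only point to state explicitly is that the translated unknowns stay in \(\bbd^{n_U}\times\bbd^{n_T}\), which holds because \(\upsilon_b,\tau_b\) have entries in \(\bbd\). This integrality is exactly why the boundary was parametrized with \(T_d A_d\) rather than \(T_d\).
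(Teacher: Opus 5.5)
Your proof is correct and is essentially the paper's argument: write \(z = h + b\) with \(b^{\Delta} = (U_d \mid T_d A_d)(\upsilon_b,\tau_b)'\), then shift a solution of \(S(h,I)\) by \(-(\upsilon_b,\tau_b)\) to get a solution of \(S(z,I)\). The converse is the same argument with the shift reversed, as the paper also does.
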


\begin{proof}
Assume that \(S(h,I)\) is solvable and that 
\(\left(
\begin{array}{c}
\upsilon_* \\
\tau_*
\end{array}
\right)\) is a solution. Then
\[
(U_d \mid T_dA_d)_I
\left(
\begin{array}{c}
\upsilon_* \\
\tau_*
\end{array}
\right)
=
-h_I^\Delta .
\]

Since \(z\in[h]\), there exists \(b\in \Bo_d\) such that \(z=h+b\). Hence there exist 
\(\upsilon_b\in\bbd^{n_U}\) and \(\tau_b\in\bbd^{n_T}\) with 
\(b\approx(\upsilon_b,\tau_b)\), and \(z \approx h + (\upsilon_b,\tau_b)\). Therefore,
\[
z_I^\Delta
=
h_I^\Delta
+
(U_d\mid T_dA_d)_I
\left(
\begin{array}{c}
\upsilon_b\\
\tau_b
\end{array}
\right).
\]
Subtracting the two expressions gives
\[
(U_d\mid T_dA_d)_I
\left(
\begin{array}{c}
\upsilon_*-\upsilon_b\\
\tau_*-\tau_b
\end{array}
\right)
=
-z_I^\Delta ,
\]
so \(S(z,I)\) is solvable. The converse implication follows by reversing the argument.
\end{proof}

\begin{lemma}
\label{lem:threshold_system}
Let \(\Delta\) be a simplicial complex, let \(\mu \in \calfc(\Delta,L)\), and let \([h] \in \Ho_d\). For each \(\ell \in L\),
define the index subset
\(I(\ell) = \big\{i \in \{1, \ldots, n_d\}\mid \sigma^{d}_{i} \in \mu^{\not\geq \ell}\big\}.\)
Then, the vector 
\(\left( \begin{array}{c} \upsilon \\ \tau \end{array} \right) \in \bbd^{n_U+n_T}\)
is a solution for the system \(S(h, I(\ell))\) if and only if the \(d\)-cycle \(z \in [h]\) with \(z \approx h+(\upsilon,\tau)\) satisfies that \(\kappa_d(z)\geq \ell\).
\end{lemma}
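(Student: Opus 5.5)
The plan is to unfold both sides of the equivalence into a statement about the coordinates of \(z\) in the basis \(\E_d\), and then match them coordinate by coordinate.

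\emph{Step 1: describe \(z^\Delta\).} By the discussion preceding \Cref{def:constraint_system}, the cycle \(z \approx h+(\upsilon,\tau)\) is \(z = h + b\), where \(b\in\Bo_d\) has \(b^H = (\upsilon, A_d\tau, 0, 0)'\). Applying the change-of-basis matrix \(\mdh_d\) gives
\[
z^\Delta = h^\Delta + U_d\upsilon + T_dA_d\tau = h^\Delta + (U_d\mid T_dA_d)\left(\begin{array}{c}\upsilon\\ \tau\end{array}\right).
\]
Restricting to the rows indexed by \(I=I(\ell)\), the vector \((\upsilon,\tau)'\) solves \(S(h,I(\ell))\) if and only if \(z^\Delta_I = 0\). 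In words, \(z\) has zero coefficient on every \(d\)-simplex \(\sigma^d_i\) with \(\mu(\sigma^d_i)\not\geq \ell\).

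\emph{Step 2: rewrite \(\kappa_d(z)\geq\ell\) in the same terms.} By \cref{item:d-chain-fuzzy} of \Cref{prop:chains_submodules_properties},
\[
\kappa_d(z)=\bigwedge_{i:\, z_i\neq 0}\mu(\sigma^d_i).
\]
By the defining property of a meet, this is \(\geq \ell\) if and only if \(\mu(\sigma^d_i)\geq\ell\) for every \(i\) with \(z_i\neq 0\). Taking the contrapositive, this says \(z_i=0\) whenever \(\mu(\sigma^d_i)\not\geq\ell\), i.e.\ whenever \(i\in I(\ell)\). That is exactly the condition \(z^\Delta_{I(\ell)}=0\).

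\emph{Step 3: conclude.} Chaining Steps 1 and 2 yields the claimed equivalence.

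\emph{Edge cases.} If \(I(\ell)=\emptyset\), the system \(S(h,\emptyset)\) is vacuous, so every vector is a solution. Correspondingly every \(z\in[h]\) satisfies \(\kappa_d(z)\geq \ell\), since all simplices have \(\mu\geq\ell\). If \(z=0\), the meet over the empty index set is \(1\geq\ell\), and \(z^\Delta_I=0\) holds trivially, so both sides agree.

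The main obstacle is Step 1: being careful that \(z\) is parametrised exactly by \((\upsilon,\tau)\) via \(b^H=(\upsilon,A_d\tau,0,0)'\), so that the boundary part contributes \(U_d\upsilon+T_dA_d\tau\). The sign convention \(-c^\Delta_I\) in \Cref{def:constraint_system} must then align with \(z^\Delta_I = h^\Delta_I + (\cdots)\) vanishing. Once that bookkeeping is fixed, the remainder is the single lattice fact that a finite meet is \(\geq\ell\) if and only if each of its terms is.
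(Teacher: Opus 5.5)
Your proof is correct and takes essentially the same approach as the paper. Both arguments identify \(z^\Delta_{I(\ell)} = h^\Delta_{I(\ell)} + (U_d\mid T_dA_d)_{I(\ell)}\left(\begin{array}{c}\upsilon\\ \tau\end{array}\right)\), so that solvability of \(S(h,I(\ell))\) is equivalent to \(z\) vanishing on \(I(\ell)\), and both then use \cref{item:d-chain-fuzzy} of \Cref{prop:chains_submodules_properties} together with the fact that a finite meet is \(\geq \ell\) exactly when every term is; the only difference is that you chain two equivalences, whereas the paper proves the two implications separately.
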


\begin{proof}
Suppose that 
\(\left(\begin{array}{c}\upsilon \\ \tau\end{array}\right)\in \bbd^{n_U+n_T}\)
is a solution for \(S(h,I(\ell))\). This means that
\[
(U_{d} \mid T_{d} A_{d})_{I(\ell)}
\left(\begin{array}{c}\upsilon \\ \tau\end{array}\right)= -h^{\Delta}_{I(\ell)}.
\]
Consider the \(d\)-cycle \(z \in [h]\) such that \(z \approx h+(\upsilon,\tau)\). Its coordinate vector with respect to the basis \(\E_d\) is
\[
z^\Delta = h^\Delta + (U_{d} \mid T_{d}A_{d})
\left(\begin{array}{c}\upsilon \\\tau\end{array}\right).
\]
Restricting to indices in \(I(\ell)\), we obtain
\[
z^\Delta_{I(\ell)} = h^\Delta_{I(\ell)} + (U_{d} \mid T_{d} A_{d})_{I(\ell)}
\left(\begin{array}{c}\upsilon \\\tau\end{array}\right)
= h^\Delta_{I(\ell)} -h^\Delta_{I(\ell)} = 0,
\]
which implies that \(z_i = 0\) for all \(i \in I(\ell)\). By \Cref{prop:chains_submodules_properties}, we then have
\[
\kappa_d(z) = \bigwedge_{\substack{i=1,\ldots,n_d \\ z_i \neq 0}} \mu(\sigma^{d}_{i}) = \bigwedge_{\substack{i \notin I(\ell) \\ z_i \neq 0}} \mu(\sigma^{d}_{i}).
\]
By definition of \(I(\ell)\), for all \(i \notin I(\ell)\), we have \(\mu(\sigma^{d}_{i}) \geq \ell\), hence the meet is \(\kappa_d(z) \geq \ell\).

Conversely, suppose there exists a \(d\)-cycle \(z \in [h]\) such that \(\kappa_d(z) \geq \ell\). 
Recall that \(z_i\) denotes the coefficient of \(z\) with respect to the element \(\sigma^d_i \in E^\Delta_d\).
By definition of \(\kappa_d\), it is impossible to have \(z_i \neq 0\) for any \(i\) with \(\sigma^{d}_{i} \in \mu^{\not \geq \ell}\) (in that case, the meet would belong to \(L^{\not \geq \ell}\) and \(\kappa_d(z)\not\geq \ell\)). This means that \(z_i = 0\) for all \(i \in I(\ell)\). On the other hand, there exists \(b \in \Bo_d\) such that \(z=h+b\), and there exist \(\upsilon \in \bbd^{n_U}\) and \(\tau \in \bbd^{n_b}\) such that \(b \approx (\upsilon,\tau)\) and \(z \approx h+(\upsilon,\tau)\). In that case, 
\[
0=z^\Delta_{I(\ell)} = h^\Delta_{I(\ell)} + (U_{d} \mid T_{d}A_{d})_{I(\ell)}
\left(\begin{array}{c}\upsilon \\ \tau\end{array}\right).
\]
and thus 
\(\left(\begin{array}{c}\upsilon \\ \tau\end{array}\right) \in \bbd^{n_U+n_T}\)
is a solution to the system \(S(h, I(\ell))\).
\end{proof}

To solve a system of the type \(Ax=b\) with coefficients in \(\bbd\), it suffices to apply \Cref{thm:snf} and compute the two invertible matrices \(P,Q\) such that \(PAQ = D\), with \(D\) a diagonal matrix. Then, the system is equivalent to \(DQ^{-1}x = Pb\). Making the change of variables \(y = Q^{-1}x\), the new system is \(Dy = Pb\). This system is solvable if and only if each diagonal entry \(d_{i}\) of \(D\) divides the corresponding component \((Pb)_i\) for all \(i\). 

\begin{theorem}
\label{thm:hdl_properties}
Let \(\Delta\) be a simplicial complex and \(\Ho_d\) its \(d\)-homology \(\bbd\)-module. Let \(\mu \in \calfc(\Delta,L)\) be an \(L\)-fuzzy subcomplex of \(\Delta\) and \(\eta_d \in \calfm(\Ho_d,L)\) the \(L\)-fuzzy \(d\)-homology of \(\mu\). For each \(\ell \in L\),
define the crisp subset of \(\Ho_d\):
\[
\Ho_d(\ell) = \big\{[h] \in \Ho_d \mid S(h,I(\ell)) \text{ is solvable} \big\}.
\]
We claim:
\begin{enumerate}[label=(\roman*)]
\item \label{item:hdl_submodule} The set \(\Ho_d(\ell)\) is a crisp submodule of \(\Ho_d\).
\item \label{item:hdl_functor} For any \(\ell_1, \ell_2 \in L\) with \(\ell_1 \leq \ell_2\), we have \(\Ho_d(\ell_1) \supseteq \Ho_d(\ell_2)\).
\item \label{item:hdl_join} For any subset \(S \in L\), we have \(\Ho_d(\bigvee S)\subseteq \bigcap_{\ell \in S} \Ho_d(\ell)\).
\item \label{item:hdl_meet} For any subset \(S \in L\), we have \(\Ho_d(\bigwedge S)\supseteq \bigcap_{\ell \in S} \Ho_d(\ell)\).
\item \label{item:hdl_eta} For every \([h] \in \Ho_d\),
\[
\eta_d([h]) = \bigvee \Big\{\, \ell \in L(\kappa_d) \, \Big| \, [h] \in \Ho_d(\ell) \,\Big\}.
\]
\item \label{item:hdl_contain} For any \(\ell \in L\), we have \(\Ho_d(\ell) \subseteq \eta_d^{\geq \ell}\).
\item \label{item:hdl_cut} For any \(\ell \in L\),
\[
\eta_d^{\geq \ell}
= \bigcup_{\substack{S \subseteq L(\kappa_d) \\ \bigvee S \geq \ell}} \bigcap_{s \in S} \Ho_d(s).
\]
\end{enumerate}
\end{theorem}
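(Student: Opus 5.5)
The basic tool is a reformulation of membership in \(\Ho_d(\ell)\) through \Cref{lem:shi_welldefined} and \Cref{lem:threshold_system}. The first lemma shows that \(\Ho_d(\ell)\) does not depend on the chosen representative. Combining the two lemmas gives the characterization I will use throughout:
\[
[h]\in\Ho_d(\ell)\iff \exists\, z\in[h] \text{ with } \kappa_d(z)\geq \ell .
\]

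\textbf{Items (i) and (ii).} For \ref{item:hdl_submodule}, the class \([0]\) lies in \(\Ho_d(\ell)\) because the cycle \(0\) has \(\kappa_d(0)=1\). Now take \([h_1],[h_2]\in\Ho_d(\ell)\) with witnesses \(z_1,z_2\) satisfying \(\kappa_d(z_i)\geq\ell\), and take \(a\in\bbd\). Then \(az_1+z_2\in a[h_1]+[h_2]\), and
\[
\kappa_d(az_1+z_2)\geq \kappa_d(z_1)\wedge\kappa_d(z_2)\geq \ell,
\]
since \(\kappa_d\) is an \(L\)-fuzzy submodule. Equivalently, the coordinate vanishing on \(I(\ell)\) is preserved under linear combinations. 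For \ref{item:hdl_functor}, if \(\ell_1\leq\ell_2\) then any witness with \(\kappa_d(z)\geq\ell_2\) also satisfies \(\kappa_d(z)\geq\ell_1\). One can also see this directly: \(I(\ell_1)\subseteq I(\ell_2)\), so \(S(h,I(\ell_1))\) is a subsystem of \(S(h,I(\ell_2))\).

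\textbf{Items (iii) and (iv).} Both follow from \ref{item:hdl_functor}. For \ref{item:hdl_join}, we have \(\bigvee S\geq \ell\) for each \(\ell\in S\), so \(\Ho_d(\bigvee S)\subseteq \Ho_d(\ell)\) for every \(\ell\in S\). For \ref{item:hdl_meet}, we have \(\bigwedge S\leq\ell\), so \(\Ho_d(\ell)\subseteq\Ho_d(\bigwedge S)\). Hence the intersection (indeed even any single \(\Ho_d(\ell)\)) is contained in \(\Ho_d(\bigwedge S)\). The case \(S=\emptyset\) is harmless if the empty intersection is read as \(\Ho_d\), since \(\bigwedge\emptyset=1\)… though here one must check that \(\Ho_d(1)=\Ho_d\) is not claimed. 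I would simply restrict to non-empty \(S\), or note that the empty-intersection convention must be taken inside \(\Ho_d(1)\).

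\textbf{Item (v).} By \Cref{cor:eta_value}, \(\eta_d([h])=\bigvee\{\kappa_d(z)\mid z\in[h]\}\).
\begin{itemize}
\item (\(\leq\)) Each \(\kappa_d(z)\) is an element \(\ell\in L(\kappa_d)\) with \(z\) witnessing \([h]\in\Ho_d(\ell)\), so each term appears in the right-hand set.
\item (\(\geq\)) If \(\ell\in L(\kappa_d)\) and \([h]\in\Ho_d(\ell)\), there is a witness \(z\in[h]\) with \(\ell\leq\kappa_d(z)\leq\eta_d([h])\).
\end{itemize}
Item \ref{item:hdl_contain} is immediate in the same way: a witness gives \(\eta_d([h])\geq\kappa_d(z)\geq\ell\).

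\textbf{Item (vii).} This is the main obstacle, because \(\eta_d([h])\geq\ell\) does not by itself produce a single cycle of level \(\ell\).
\begin{itemize}
\item (\(\supseteq\)) If \([h]\in\Ho_d(s)\) for all \(s\in S\), then by \ref{item:hdl_eta} \(\eta_d([h])\geq\bigvee S\geq\ell\).
\item (\(\subseteq\)) Given \(\eta_d([h])\geq\ell\), take \(S=\{s\in L(\kappa_d)\mid [h]\in\Ho_d(s)\}\). By \ref{item:hdl_eta}, \(\bigvee S=\eta_d([h])\geq\ell\), and by construction \([h]\in\bigcap_{s\in S}\Ho_d(s)\). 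Here \(L(\kappa_d)\) is finite by \Cref{rem:Lsets}, so this \(S\) is a legitimate finite index set.
\end{itemize}
The step to watch is that the union runs over subsets \(S\) of \(L(\kappa_d)\) rather than of \(L\), which is exactly why \ref{item:hdl_eta} is stated with \(L(\kappa_d)\). I will also verify that \(\kappa_d(z)\in L(\kappa_d)\) always holds, which is true by definition of the image set.
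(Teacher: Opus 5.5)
Your proposal is correct and follows essentially the paper's proof: (iii)--(iv) come from (ii), and (v)--(vii) rest on \Cref{cor:eta_value} and \Cref{lem:threshold_system} through the set $\{\ell\in L(\kappa_d)\mid [h]\in\Ho_d(\ell)\}$, exactly as in the paper. Your witness-based proof of (i) is just a reformulation of the paper's adding and scaling of solutions of $S(h,I(\ell))$, and your small deviations are, if anything, more careful: your direct witness argument for (vi) covers every $\ell\in L$ (the paper's step ``$\ell\in S_h$'' tacitly needs $\ell\in L(\kappa_d)$), and you are right to exclude $S=\emptyset$ in (iv), since then the claim would read $\Ho_d(1)=\Ho_d$, which fails in the paper's own example where $\Ho_0(1)=\{0\}$ (the paper's argument actually proves $\bigcup_{\ell\in S}\Ho_d(\ell)\subseteq\Ho_d(\bigwedge S)$).
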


\begin{proof}
First note that \(\Ho_d(\ell)\) is well-defined, since \Cref{lem:shi_welldefined} ensures that the solvability of \(S(h,I(\ell))\) is independent of the chosen representative of \([h]\).
\begin{enumerate}[label=(\roman*)]
\item The zero class \([0] \in \Ho_d\) belongs to \(\Ho_d(\ell)\) because \(S(0, I(\ell))\) is homogeneous and trivially solvable.
Let \([h_1], [h_2] \in \Ho_d(\ell)\). Then \(S(h_1, I(\ell))\) and \(S(h_2, I(\ell))\) have solutions 
\(\left(\begin{array}{c}\upsilon_1 \\ \tau_1\end{array}\right)\)
and 
\(\left(\begin{array}{c}\upsilon_2 \\ \tau_2\end{array}\right)\) 
respectively. Hence:
\[
(U_{d} \mid T_{d} A_d)_{I(\ell)}
\left(\begin{array}{c}\upsilon_1 \\\tau_1\end{array}\right)= -(h_1)^{\Delta}_{I(\ell)}, 
\quad
(U_{d} \mid T_{d} A_d)_{I(\ell)}
\left(\begin{array}{c}\upsilon_2 \\\tau_2\end{array}\right)= -(h_2)^{\Delta}_{I(\ell)}.
\]
Adding these equations yields:
\[
(U_{d} \mid T_{d} A_d)_{I(\ell)}
\left(\begin{array}{c}\upsilon_1 + \upsilon_2 \\\tau_1 + \tau_2\end{array}\right)= -(h_1 + h_2)^{\Delta}_{I(\ell)}.
\]
Thus, \(S(h_1 + h_2, I(\ell))\) is solvable, so \([h_1 + h_2] = [h_1] + [h_2] \in \Ho_d(\ell)\). Now, let \([h] \in \Ho_d(\ell)\) with a solution 
\(\left(\begin{array}{c}\upsilon \\ \tau\end{array}\right)\), and let \(a \in \bbd\). Then:
\[
(U_{d} \mid T_{d} A_d)_{I(\ell)}
\left(\begin{array}{c}a\upsilon \\a\tau\end{array}\right)= a \cdot
(U_{d} \mid T_{d} A_d)_{I(\ell)}
\left(\begin{array}{c}\upsilon \\\tau\end{array}\right)= -(a h)^{\Delta}_{I(\ell)}.
\]
Hence \(S(ah, I(\ell))\) is solvable and therefore \([ah] = a[h] \in \Ho_d(\ell)\).
Consequently, \(\Ho_d(\ell)\) is a submodule of \(\Ho_d\).
\item If \(\ell_1 \leq \ell_2\), then \(I(\ell_1) \subseteq I(\ell_2)\). Thus, for any \([h] \in \Ho_d\), the system \(S(h,I(\ell_2))\) contains all equations of \(S(h,I(\ell_1))\). Therefore, the solvability of \(S(h,I(\ell_2))\) implies the solvability of \(S(h,I(\ell_1))\), and hence \(\Ho_d(\ell_2) \subseteq \Ho_d(\ell_1)\).

\item From \cref{item:hdl_functor} we have \(\Ho_d(\ell) \supseteq \Ho_d \left(\bigvee S\right)\) for all \(\ell \in S\), which implies that \(\Ho_d(\bigvee S) \subseteq \bigcap_{\ell \in S} \Ho_d(\ell)\).

\item From \cref{item:hdl_functor} we have \(\Ho_d(\ell) \subseteq \Ho_d \left(\bigwedge S\right)\) for all \(\ell \in S\), which implies that \(\Ho_d(\bigwedge S) \supseteq \bigcup_{\ell \in S} \Ho_d(\ell)\).

\item\label{item:Lh-Sh} Having fixed \([h] \in \Ho_d\), define the sets:
\[
L_h = \big\{\kappa_d(z) \mid z \in [h]\big\} \subseteq L(\kappa_d), 
\qquad 
S_h = \big\{\, \ell \in L(\kappa_d) \mid [h] \in \Ho_d(\ell) \,\big\} \subseteq L(\kappa_d).
\]
By \Cref{cor:eta_value}, we know that \(\eta_d([h])=\bigvee L_h\). We now show that \(\bigvee L_h = \bigvee S_h\) by proving both inequalities.
If \(\ell \in L_h\), then there exists \(z \in [h]\) such that \(\kappa_d(z) = \ell\). By \Cref{lem:threshold_system}, \(S(h,I(\ell))\) is solvable, hence \([h] \in \Ho_d(\ell)\) and \(\ell \in S_h\). Thus \(L_h \subseteq S_h\), and consequently \(\bigvee L_h \leq \bigvee S_h\). Conversely, if \(\ell \in S_h\), then \([h] \in \Ho_d(\ell)\) and \(S(h,I(\ell))\) is solvable. By \Cref{lem:threshold_system}, there exists \(z \in [h]\) such that \(\kappa_d(z) \geq \ell\), with \(\kappa_d(z) \in L_h\). Therefore, each element of \(S_h\) is bounded by an element of \(L_h\), implying \(\bigvee S_h \leq \bigvee L_h\). Hence \(\eta_d([h])=\bigvee L_h = \bigvee S_h\).
\item Let \([h] \in \Ho_d(\ell)\).
Then $\ell \in S_h$; where $S_h$ is defined as in \cref{item:Lh-Sh}. 
Further, by \cref{item:hdl_eta}, we have \(\eta_d([h])=\bigvee S_{h}\). Hence \(\ell \leq \eta_d([h])\), which implies that \([h] \in \eta_d^{\geq \ell}\). Therefore \(\Ho_d(\ell) \subseteq \eta_d^{\geq \ell}\).
\item We prove both inclusions. First assume that \([h] \in \eta_d^{\geq \ell}\).
By \cref{item:hdl_eta}, we have \(\eta_d([h])=\bigvee S_h \geq \ell\). Since \([h] \in \Ho_d(s)\) for every \(s \in S_h\), it follows that \([h] \in \bigcap_{s \in S_h} \Ho_d(s)\). Because \(\bigvee S_h \geq \ell\), we obtain
\[
[h] \in 
\bigcup_{\substack{S \subseteq L(\kappa_d) \\ \bigvee S \geq \ell}}
\bigcap_{s \in S} \Ho_d(s).
\]
Conversely, assume that \([h]\) belongs to that set.
Then there exists a subset \(S \subseteq L(\kappa_d)\) such that \([h] \in \Ho_d(s)\) for all \(s \in S\) and \(\bigvee S \geq \ell\).
In particular, $S \subseteq S_h$.
By \cref{item:hdl_eta},
\(\eta_d([h])= \bigvee S_h\geq \bigvee S\geq \ell.\)
Therefore \([h] \in \eta_d^{\geq \ell}\), which completes the proof. \qedhere
\end{enumerate}
\end{proof}

Let us discuss why this result is relevant. We already know from \Cref{prop:submodules_properties} that \(\cut(\eta_d):L \to \sub(\Ho_d)\) is a contravariant functor given by \(\ell \mapsto \eta_d^{\geq \ell}\). Now, in \Cref{thm:hdl_properties} we introduce a second contravariant functor \(\solv(\eta_d): L \to \sub(\Ho_d)\) given by \(\ell \mapsto \Ho_d(\ell)\), whose definition is based on the solvability of certain linear systems. By \cref{item:hdl_contain}, the functor \(\solv(\eta_d)\) is pointwise contained in \(\cut(\eta_d)\), that is, \(\Ho_d(\ell) \subseteq \eta_d^{\geq \ell}\) for all \(\ell \in L\). The reverse inclusion is not true in general, as shown in \Cref{rem:counterexample1}.
The new functor \(\solv(\eta_d)\) does not preserve joins though. Indeed, while \Cref{prop:subsetsproperties} ensures that \(\eta_d^{\geq \bigvee S}= \bigcap_{\ell \in S}\eta_d^{\geq \ell}\), it is not generally true that \(\Ho_d(\bigvee S) = \bigcap_{\ell \in S} \Ho_d(\ell)\), as shown in \Cref{rem:counterexample2}.

However, the family of submodules \(\Ho_d(\ell)\) encodes all the information needed to recover \(\eta_d\). More precisely, \cref{item:hdl_eta} provides an alternative characterization of the value \(\eta_d([h])\) in terms of the submodules \(\Ho_d(\ell)\). This characterization is particularly useful from a computational point of view. Indeed, the original definition of \(\eta_d([h])\) requires considering all \(d\)-cycles in the class \([h]\), while the new formulation only involves the values in \(L(\kappa_d)\), which is always a finite set (since we work with finite simplicial complexes). Furthermore, \cref{item:hdl_cut} shows that the cuts \(\eta_d^{\geq \ell}\) can be reconstructed from the family of submodules \(\Ho_d(\ell)\) by means of unions and intersections. Consequently, the computation of the \(L\)-fuzzy homology reduces to the computation of the submodules \(\Ho_d(\ell)\).

\begin{remark}
The formula in \cref{item:hdl_cut} of \Cref{thm:hdl_properties}
\[
\eta_d^{\geq \ell}
= \bigcup_{\substack{S \subseteq L(\kappa_d) \\ \bigvee S \geq \ell}}
\bigcap_{s \in S} \Ho_d(s)
\]
may appear complicated at first sight, but it simplifies considerably in common situations. 
For instance, suppose that the CDL \((L,\leq)\) is totally ordered. Let \(S \subseteq L(\kappa_d)\) such that \(\bigvee S \geq \ell\). 
Since \(L(\kappa_d)\) is finite and totally ordered, the join \(\bigvee S\) is simply the maximum element of \(S\), and it belongs to \(S\).
By \cref{item:hdl_functor} of \Cref{thm:hdl_properties}, we have \(\Ho_d(s) \supseteq \Ho_d(\bigvee S)\) for all \(s \in S\). 
Therefore, the formula becomes
\[
\eta_d^{\geq \ell}
= \bigcup_{\substack{S \subseteq L(\kappa_d) \\ \bigvee S \geq \ell}}
\Ho_d \left(\bigvee S\right).
\]
However, if \(S\) has maximum \(s^*=\bigvee S\), the singleton \(\{s^*\}\) also satisfies 
\(\bigvee\{s^*\}=s^*\). Hence subsets with more than one element are redundant in the union, and the expression reduces to
\[
\eta_d^{\geq \ell}
= \bigcup_{\substack{s \in L(\kappa_d) \\ s \geq \ell}} \Ho_d(s).
\]
Since the set \(\{s \in L(\kappa_d)\mid s \geq \ell\}\) is finite and totally ordered, it has a minimum element \(t\). Again by \cref{item:hdl_functor} of \Cref{thm:hdl_properties}, we have \(\Ho_d(t) \supseteq \Ho_d(s)\) for every \(s \geq \ell\), and therefore
\[
\eta_d^{\geq \ell}=\Ho_d(t),
\quad \text{ being }
t=\min\{s \in L(\kappa_d)\mid s \geq \ell\}.
\]
\end{remark}

With these results in place, we now proceed to compute explicitly the submodules \(\Ho_d(\ell)\) for each \(\ell \in L(\kappa_d)\). Recall that we have the isomorphism \(\Ho_d \cong 
\{(\alpha_{1},\dots,\alpha_{n_T},\varphi_{1},\dots,\varphi_{n_F})' 
\mid \alpha_{i} \in \bbd/(a^{d}_{i}),\ \varphi_{j} \in \bbd\}\).

\begin{proposition}
\label{prop:hdl_projection}
Let \(\Delta\) be a simplicial complex, let \(\Ho_d\) denote its \(d\)-homology, let \(\mu \in \calfc(\Delta,L)\) and let \(\ell \in L\). Let \(G_d=(U_d \mid T_d \mid F_d)\) be the submatrix of \(M_d^{\Delta,H}\) without the block \(R_d\) and denote \(G_{d,I(\ell)} = (U_d \mid T_d \mid F_d)_{I(\ell)}\). Then,
\[
\Ho_d(\ell)= 
\left\{ 
[h] \in \Ho_d \; \middle | \;
h \approx (\upsilon,\tau,\varphi) \text{  and  }
\left(\begin{array}{c}\upsilon \\ \tau \\ \varphi \end{array}\right) \in \ker G_{d,I(\ell)}
\right\}.
\]
\end{proposition}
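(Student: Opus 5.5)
The plan is to read the set-builder description as quantifying over representatives: \([h]\) belongs to the right-hand side when some \(d\)-cycle \(z\in[h]\), with \(z\approx(\upsilon,\tau,\varphi)\), has \((\upsilon,\tau,\varphi)'\in\ker G_{d,I(\ell)}\). I will then prove the two inclusions directly from the definition of \(\Ho_d(\ell)\), using \Cref{lem:shi_welldefined} and the change-of-basis identity. For a \(d\)-cycle \(z\approx(\upsilon_z,\tau_z,\varphi_z)\) we have \(z^H=(\upsilon_z,\tau_z,0,\varphi_z)'\), so
\[
z^\Delta=\mdh_d z^H=G_d\left(\begin{array}{c}\upsilon_z\\ \tau_z\\ \varphi_z\end{array}\right).
\]
Restricting to the rows in \(I(\ell)\) gives \(z^\Delta_{I(\ell)}=G_{d,I(\ell)}(\upsilon_z,\tau_z,\varphi_z)'\). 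Hence the condition \((\upsilon_z,\tau_z,\varphi_z)'\in\ker G_{d,I(\ell)}\) says exactly that \(z^\Delta_{I(\ell)}=0\), i.e.\ \(z\) has zero coefficient on every \(d\)-simplex \(\sigma^d_i\) with \(\mu(\sigma^d_i)\not\geq\ell\).

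For the inclusion \(\supseteq\), suppose \(z\in[h]\) satisfies \(z^\Delta_{I(\ell)}=0\). Then the system \(S(z,I(\ell))\) has right-hand side \(-z^\Delta_{I(\ell)}=0\), so it is homogeneous and \((\upsilon,\tau)=(0,0)\) solves it. By \Cref{lem:shi_welldefined}, \(S(h,I(\ell))\) is then solvable, so \([h]\in\Ho_d(\ell)\).

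For the inclusion \(\subseteq\), let \((\upsilon_*,\tau_*)\) be a solution of \(S(h,I(\ell))\), and take the cycle \(z=h+b\) with \(b\approx(\upsilon_*,\tau_*)\). This is exactly the cycle used in the first half of the proof of \Cref{lem:threshold_system}. That computation shows \(z^\Delta_{I(\ell)}=h^\Delta_{I(\ell)}-h^\Delta_{I(\ell)}=0\). Its \(H\)-coordinates are \(z\approx(\upsilon_h+\upsilon_*,\ \tau_h+A_d\tau_*,\ \varphi_h)\), and by the identity above this vector lies in \(\ker G_{d,I(\ell)}\). Since \(z\in[h]\), the class \([h]\) belongs to the right-hand side.

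The only delicate point is the bookkeeping of representatives, since the statement writes \(h\approx(\upsilon,\tau,\varphi)\) with \(h\) itself. I will state explicitly that the condition refers to some representative of the class. I will also remark that it cannot be read as "for every representative": a generic cycle in \([h]\) need not vanish on \(I(\ell)\), and \Cref{lem:shi_welldefined} only transports solvability, not vanishing. Beyond this, I expect no real obstacle; the argument is a reformulation of \Cref{lem:threshold_system} in \(H\)-coordinates.
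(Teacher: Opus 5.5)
Your proposal is correct and follows the paper's proof essentially step for step. For \(\supseteq\), a representative vanishing on \(I(\ell)\) makes the system homogeneous; for \(\subseteq\), you shift \(h\) by the boundary \(b\approx(\upsilon_*,\tau_*)\) coming from a solution of \(S(h,I(\ell))\), which gives a representative with coordinates \((\upsilon_h+\upsilon_*,\tau_h+A_d\tau_*,\varphi_h)\in\ker G_{d,I(\ell)}\), exactly as the paper does. Your existential reading of the representative is the one the paper uses implicitly when it concludes via \([z]=[h]\), so stating it explicitly is a useful clarification rather than a deviation.
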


\begin{proof}
We prove both inclusions, denoting \(G_d(\ell)\) to the set in the right side. In the first place, let \([h] \in G_d(\ell)\). Then, there exist \(\upsilon \in \bbd^{n_U}\), \(\tau \in \bbd^{n_T}\) and \(\varphi \in \bbd^{n_F}\) such that \(h \approx (\upsilon,\tau,\varphi)\) and
\[
h^{\Delta}_{I(\ell)}= 
\left(U_{d} \mid T_{d} \mid F_{d}\right)_{I(\ell)}
\left(\begin{array}{c}\upsilon\\ \tau\\ \varphi\end{array}\right)
= G_{d,I(\ell)}
\left(\begin{array}{c}\upsilon\\ \tau\\ \varphi\end{array}\right) = 0.
\]
Since \(h^{\Delta}_{I(\ell)} =0\), the system \(S(h,I(\ell))\) is homogeneous and trivially solvable, and therefore \([h] \in \Ho_d(\ell)\).

Conversely, consider a \(d\)-homology class \([h] \in \Ho_d(\ell)\). Since \(h \in \Zo_d\), there exist \(\upsilon \in \bbd^{n_U}\), \(\tau \in \bbd^{n_T}\) and \(\varphi \in \bbd^{n_F}\) such that \(h \approx (\upsilon,\tau,\varphi)\). Then, we have
\[
h^{\Delta}=
\left(U_d \mid T_d \mid F_d\right)
\left(\begin{array}{c}\upsilon\\ \tau\\ \varphi\end{array}\right)
=G_d
\left(\begin{array}{c}\upsilon\\ \tau\\ \varphi\end{array}\right)
\quad \text{ and } \quad
[h] = 
\left(\begin{array}{c}\pi^{d}(\tau) \\ \varphi\end{array}\right).
\]
Since \([h] \in \Ho_d(\ell)\), the system \(S(h,I(\ell))\) is solvable.
Thus there exist \(\upsilon_h \in \bbd^{n_U}\) and \(\tau_h \in \bbd^{n_T}\) such that
\[
\left(U_{d} \mid T_{d} A_d\right)_{I(\ell)}
\left(\begin{array}{c}\upsilon_h\\ \tau_h\end{array}\right)
=- h^{\Delta}_{I(\ell)}=
-\left(U_{d} \mid T_{d} \mid F_{d}\right)_{I(\ell)}
\left(\begin{array}{c}\upsilon\\ \tau\\ \varphi\end{array}\right).
\]
Rewriting, we have
\[
\left(U_{d} \mid T_{d} \mid F_{d}\right)_{I(\ell)}
\left(\begin{array}{c}\upsilon + \upsilon_h\\\tau + A_d \tau_h\\\varphi\end{array}\right)
=0,
\quad \text{ and } \quad
\left(\begin{array}{c}\upsilon + \upsilon_h\\\tau + A_d \tau_h\\\varphi\end{array}\right) 
\in \ker G_{d,I(\ell)}.
\]
Let \(z \in \Zo_d\) be the \(d\)-cycle such that \(z \approx (\upsilon+\upsilon_h, \tau + A_d\tau_h,\varphi)\). The \(d\)-homology class \([z]\) belongs to \(G_d(\ell)\). Moreover, for \(i=1,\dots,n_T\) we have that \(\pi^d_i(\tau_i + a^d_i\tau_{h,i})=\pi^d_i(\tau_i)\). Then,
\[
[z] = 
\left(\begin{array}{c}\pi^d(\tau + A_d \tau_h) \\ \varphi \end{array}\right) = 
\left(\begin{array}{c}\pi^d(\tau) \\ \varphi \end{array}\right) = [h].
\]
This implies that \([h]=[z]\) and \([h]\) also belongs to \(G_d(\ell)\).
\end{proof}

In summary, the computation of \(\Ho_d(\ell)\) consists of computing the kernel of \(G_{d,I(\ell)}\), which is a subset of \(\Zo_d\), and projecting the solutions to \(\Ho_d\). The kernel of the matrices \(G_{d,I(\ell)}\) can also be computed via the Smith normal form, since that is equivalent to solving the homogeneous system \(G_{d,I(\ell)} \xi = 0\). Alternatively, other triangularization-based techniques, such as the Hermite Normal Form \cite[Chapter~5]{schrijver1998theory}, may be employed.

\section{Example of computation}
\label{sec:example}

\begin{figure}
\centering
\begin{subfigure}[b]{0.45\textwidth}
\centering
\begin{tikzpicture}[scale=2, every node/.style={font=\small}]
\coordinate (v0) at (0,0);
\coordinate (v1) at (2,0);
\coordinate (v2) at (2,2);
\coordinate (v3) at (0,2);
\coordinate (v4) at (0.5,0.5);

\draw[line width=0.8pt] (v0)--(v1); 
\draw[line width=0.8pt] (v1)--(v2);
\draw[line width=0.8pt] (v2)--(v3); 
\draw[line width=0.8pt] (v0)--(v3); 
\draw[line width=0.8pt] (v1)--(v3); 

\filldraw[fill=gray!20, draw=black, line width=0.6pt] (v1)--(v2)--(v3)--cycle;

\fill[myred] (v0) circle (2pt);
\draw (v0) circle (2pt);
\fill[myred] (v1) circle (2pt);
\draw (v1) circle (2pt);
\fill[myblue] (v2) circle (2pt);
\draw (v2) circle (2pt);
\fill[myred] (v3) circle (2pt);
\draw (v3) circle (2pt);
\fill[myblue] (v4) circle (2pt);
\draw (v4) circle (2pt);

\node at (v0) [font=\small, anchor=north east] {\(v_0\)};
\node at (v1) [font=\small, anchor=north west] {\(v_1\)};
\node at (v2) [font=\small, anchor=north west] {\(v_2\)};
\node at (v3) [font=\small, anchor=north east] {\(v_3\)};
\node at (v4) [font=\small, anchor=south west] {\(v_4\)};
\end{tikzpicture}
\subcaption{Simplicial complex \(\Delta\) on a bi-chromatic dataset.}
\label{fig:delta}
\end{subfigure}
\hfill
\begin{subfigure}[b]{0.45\textwidth}
\centering
\begin{tikzpicture}[scale=2, every node/.style={font=\small}]
\coordinate (v0) at (0,0);
\coordinate (v1) at (2,0);
\coordinate (v2) at (2,2);
\coordinate (v3) at (0,2);
\coordinate (v4) at (0.5,0.5);

\filldraw[fill=gray!20, draw=black, line width=0.6pt] (v1)--(v2)--(v3)--cycle;
\node at (1.5,1.5) [font=\small] {\(x \wedge y\)};

\draw[line width=0.8pt] (v0)--(v1) node[midway, below] {\(x\)};
\draw[line width=0.8pt] (v1)--(v2) node[midway, right] {\(x \wedge y\)};
\draw[line width=0.8pt] (v2)--(v3) node[midway, above] {\(x\wedge y\)};
\draw[line width=0.8pt] (v0)--(v3) node[midway, left] {\(x\)};
\draw[line width=0.8pt] (v1)--(v3) node[midway, sloped, above] {\(x\)};

\fill (v0) circle (2pt);
\draw (v0) circle (2pt);
\fill (v1) circle (2pt);
\draw (v1) circle (2pt);
\fill (v2) circle (2pt);
\draw (v2) circle (2pt);
\fill (v3) circle (2pt);
\draw (v3) circle (2pt);
\fill (v4) circle (2pt);
\draw (v4) circle (2pt);

\node at (v0) [anchor=north east] {\(x\)};
\node at (v1) [anchor=north west] {\(x\)};
\node at (v2) [anchor=north west] {\(y\)};
\node at (v3) [anchor=north east] {\(x\)};
\node at (v4) [anchor=south west] {\(y\)};
\end{tikzpicture}
\subcaption{\(L\)-fuzzy subcomplex of \(\Delta\) defined by the two colors.}
\label{fig:mu}
\end{subfigure}
\caption{Example of a simplicial complex \(\Delta\) on a bi-chromatic dataset and an \(L\)-fuzzy subcomplex \(\mu \in \calfc(\Delta,L)\), being \((L\leq)=\fdl(x,y)\). Red points are assigned value \(x\) and blue points are assigned value \(y\).}
\label{fig:delta_mu}
\end{figure}
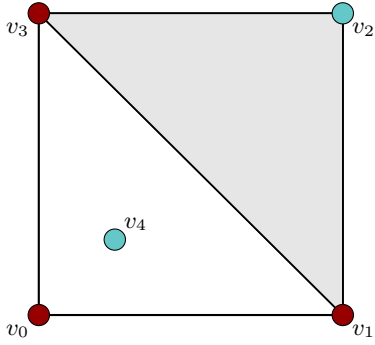
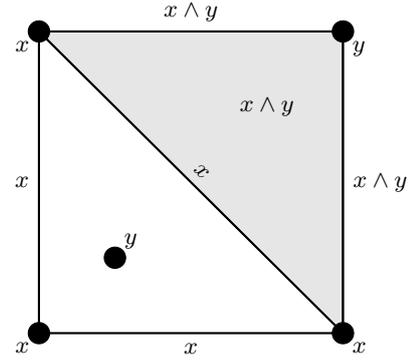

In this section we apply the theoretical results from \Cref{sec:computation} on a toy example. Consider the bi-chromatic dataset depicted in \Cref{fig:delta}, which has three red points and two blue points. We build on top of it a simplicial complex, which we denote by \(\Delta\), consisting of five \(0\)-simplices, five \(1\)-simplices, and one \(2\)-simplex. Explicitly,
\[
\Delta = 
\left\{
\overbrace{\langle v_0\rangle,\; \langle v_1\rangle,\; \langle v_2\rangle,\; \langle v_3\rangle,\; \langle v_4\rangle}^{\Delta_0},\;
\overbrace{\langle v_0,v_1\rangle,\; \langle v_0,v_3\rangle,\; \langle v_1,v_2\rangle,\;\langle v_1,v_3\rangle,\; \langle v_2,v_3\rangle}^{\Delta_1},\;
\overbrace{\langle v_1,v_2,v_3\rangle}^{\Delta_2}
\right\}.
\]
Consider the free distributive lattice \((L,\leq)=\fdl(x,y)\). 
We define a bi-chromatic \(L\)-fuzzy subcomplex \(\mu \in \calfc(\Delta,L)\) as in \Cref{exa:chroTDA}. Each red vertex is assigned the value \(x\), each blue vertex is assigned the value \(y\), and the value of any other simplex depends on the colors involved. The concrete values of \(\mu\) can be seen in \Cref{fig:mu}.

Throughout the example we take \(\bbz\) as the coefficient ring for homology computations. Since \(\mu^*=\Delta\) and \(0\) is meet-prime in \(\fdl(x,y)\), it follows that the associated fuzzy homology maps \(\eta_0 \in \calfm(\Ho_0,L)\) and \(\eta_1 \in \calfm(\Ho_1,L)\) are well defined. We carry out in detail the following tasks:
\begin{enumerate}
\item We compute the homology \(\bbz\)-modules (abelian groups) \(\Ho_0\) and \(\Ho_1\) applying \Cref{prop:diagonals-product-zero}.
\item We compute the value \(\eta_1([h])\) for a \(1\)-cycle \(h \in \Zo_1\) applying \cref{item:hdl_eta} of \Cref{thm:hdl_properties}.
\item We compute the cuts \(\eta_0^{\geq \ell}\) for each \(\ell \in L\) applying \Cref{prop:hdl_projection} and \cref{item:hdl_cut} of \Cref{thm:hdl_properties}.
\end{enumerate}

\paragraph{1. Computation of the homology groups}
If we order the vertex set lexicographically (\(v_0 < v_1 < v_2 < v_3 < v_4\)), the positively oriented simplices of \(\Delta\) are:
\[
\begin{array}{lllll}
E^{\Delta}_0 = \big\{\,\sigma^0_1=[v_0], & \sigma^0_2=[v_1], & \sigma^0_3=[v_2], & \sigma^0_4=[v_3], & \sigma^0_5=[v_4]\,\big\}\,, \\[0.6em]
E^{\Delta}_1 = \big\{\,\sigma^1_{1}=[v_0,v_1], & \sigma^1_{2}=[v_0,v_3], & \sigma^1_{3}=[v_1,v_2], & \sigma^1_{4}=[v_1,v_3], & \sigma^1_{5}=[v_2,v_3]\,\big\}\,, \\[0.6em]
E^{\Delta}_2 = \big\{\,\sigma^{2}_{1}=[v_1,v_2,v_3]\,\big\}\,. & & & &
\end{array}
\]
The sets \(E^{\Delta}_{0}\), \(E^{\Delta}_{1}\) and \(E^{\Delta}_{2}\) serve as bases for the \(d\)-chain \(\bbz\)-modules 
\(\Co_0\), \(\Co_1\) and \(\Co_2\) respectively. Given these bases, the boundary operators \(\partial_0: \Co_0 \to \Co_{-1}=\{0\}\), \(\partial_1: \Co_1 \to \Co_0\) and \(\partial_2: \Co_2 \to \Co_1\) are represented by the matrices:
\[
M_0=
\left(\begin{array}{ccccc} 0 & 0 & 0 & 0 & 0 \end{array}\right), \; \;
M_1=
\left(
\begin{array}{rrrrr}
-1 & -1 &  0 &  0 &  0 \\
 1 &  0 & -1 & -1 &  0 \\
 0 &  0 &  1 &  0 & -1 \\
 0 &  1 &  0 &  1 &  1 \\
 0 &  0 &  0 &  0 &  0 
\end{array}
\right)
\; \; \text{ and } M_2=
\left(\begin{array}{r} 0 \\ 0 \\ 1 \\-1 \\ 1 \end{array}\right),
\]
which satisfy \(M_0 M_1=0\) and \(M_1M_2=0\). After completing the reduction process detailed within \Cref{prop:diagonals-product-zero}, we obtain the following diagonal matrices:
\[
D_0=
\left(
\begin{array}{ccccc}
0 & 0 & 0 & 0 & 0 
\end{array}
\right), \; \;
D_1=
\left(
\begin{array}{rrrrr}
 0 &  1 &  0 &  0 &  0 \\
 0 &  0 &  1 &  0 &  0 \\
 0 &  0 &  0 &  1 &  0 \\
 0 &  0 &  0 &  0 &  0 \\
 0 &  0 &  0 &  0 &  0 
\end{array}
\right)
\; \; \text{ and }
D_2=
\left(\begin{array}{r} 1 \\ 0 \\ 0 \\ 0 \\ 0\end{array}\right),
\]
being the change-of-basis matrices:
\[
\mdh_0=
\left(
\begin{array}{rrrrr}
 1 &  0 &  0 &  0 &  0 \\ 
 0 &  1 &  0 &  0 &  0 \\ 
 0 &  0 &  1 &  0 &  0 \\ 
-1 & -1 & -1 &  1 &  0 \\ 
 0 &  0 &  0 &  0 &  1
\end{array}
\right),\; \;
\mdh_1=
\left(
\begin{array}{rrrrr}
 0 &  0 &  1 &  0 &  1 \\ 
 0 & -1 & -1 &  0 & -1 \\ 
 1 &  0 &  0 &  0 &  0 \\ 
-1 &  0 &  0 &  0 &  1 \\ 
 1 &  0 &  0 & -1 &  0
\end{array}
\right)
\; \; \text{ and }
\mdh_2=
\left(1\right)
.
\]
The matrices \(D_0\), \(D_1\) and \(D_2\) represent \(\partial_0\), \(\partial_1\) and \(\partial_2\) respect to the new bases \(E^{H}_0\),\(E^{H}_1\) and \(E^{H}_2\). We now examine \(D_0\), \(D_1\) and \(D_2\) to find the different groups in \(E^{H}_0\) and \(E^{H}_1\).

In dimension 0, the group U is formed by \(u^0_1=\sigma^0_1-\sigma^0_{4}\), \(u_{2}=\sigma^0_{2}-\sigma^0_{4}\) and \(u^0_{3}=\sigma^0_{3}-\sigma^0_{4}\) because the three first columns of \(D_0\) are zero and the three first rows of \(D_1\) have a unit. The group T is empty because \(D_1\) only contains zeros and units. The group R is also empty because there are not non-zero columns in \(D_0\). Finally, the group F is formed by \(f^0_1=\sigma^0_4\) and \(f^0_2=\sigma^{0}_{5}\) because the last two columns of \(D_0\) are zero and the last two rows of \(D_1\) are zero too. The change-of-basis matrix \(M^{\Delta, H}_0\) would be then divided into the blocks:
\[
U_0=
\left(
\begin{array}{rrr}
 1 &  0 &  0 \\
 0 &  1 &  0 \\
 0 &  0 &  1 \\
-1 & -1 & -1 \\
 0 &  0 &  0
\end{array}
\right),
\quad T_0=
\left(\begin{array}{r}\;\\\;\\\;\\\;\\\;\end{array}\right),
\quad R_0=
\left(\begin{array}{r}\;\\\;\\\;\\\;\\\;\end{array}\right)
\text{  and  } 
\;F_0=
\left(
\begin{array}{rr}
 0 &  0 \\
 0 &  0 \\
 0 &  0 \\
 1 &  0 \\
 0 &  1
\end{array}
\right).
\]
Then, \(\Ho_0\) is generated by the free \(0\)-homology classes \([f^0_1]=[\sigma^0_4]\) and \([f^0_{2}]=[\sigma^0_{5}]\) (which represent the connected components of \(v_3\) and \(v_4\) respectively) and: 
\[
\Ho_0 = \langle [f^0_1], [f^0_2] \rangle \cong \left\{
\left(\begin{array}{c}\varphi_1 \\ \varphi_2 \end{array}\right)
\; \middle | \; \varphi_i \in \bbz
\right\} = \bbz^2\,.
\]

In dimension 1, the group U is formed by \(u^1_{1}=\sigma^1_{3}-\sigma^1_{4}+\sigma^1_{5}\) because the first column of \(D_1\) is zero and the first row of \(D_2\) has a unit. The group T is empty because \(D_2\) only contains zeros and units. The group R is formed by \(r^1_{1}=-\sigma^1_{2}\), \(r^1_{2}=\sigma^1_{1} - \sigma^1_{2}\) and \(r^1_{3}=-\sigma^1_{5}\) because the second, third and fourth columns of \(D_1\) are non-zero. Finally, the group F is formed by \(f^1_{1}=\sigma^1_{1}-\sigma^1_{2}+\sigma^1_{4}\) because the fifth column of \(D_1\) is null and the fifth row of \(D_2\) is zero too. The change-of-basis matrix \(M^{\Delta, H}_1\) would be then divided into the blocks:
\[
U_1=
\left(\begin{array}{r} 0 \\ 0 \\ 1 \\-1 \\ 1\end{array}\right),
\quad T_1=
\left(\begin{array}{r}\;\\\;\\\;\\\;\\\;\end{array}\right),
\quad R_1 = 
\left(
\begin{array}{rrr}
 0 &  1 &  0 \\
-1 & -1 &  0 \\
 0 &  0 &  0 \\
 0 &  0 &  0 \\
 0 &  0 &  -1 
\end{array}
\right)
\text{  and  } \;
F_1=
\left(\begin{array}{r} 1 \\-1 \\ 0 \\ 1 \\ 0\end{array}\right).
\]
Then, \(\Ho_1\) is generated by the free \(1\)-homology class \([f^{1}_{1}]=[\sigma^1_{1}-\sigma^1_{2}+\sigma^1_{4}]\) (which represents the loop that goes through the vertices \(v_0\), \(v_1\), \(v_3\) and \(v_0\) again) and
\[
\Ho_1 = \langle [f^1_1] \rangle \cong \{\varphi_1 \mid \varphi_1 \in \bbz\} = \bbz.
\]

\paragraph{2. L-fuzzy value of a homology class} Now, it is our turn to compute \(\eta_1([f])\), where 
\[
f = f^1_1 = \sigma^{1}_{1}-\sigma^{1}_{2}+\sigma^{1}_{4} = [v_0,v_1] - [v_0,v_3] + [v_1,v_3] =  [v_0,v_1]+[v_1,v_3]+[v_3,v_0] \in \Zo_1
\]
is the loop that goes through \(v_0\), \(v_1\), \(v_3\) and \(v_0\) again. Its homology class \([f]\) generates the \(\bbz\)-module \(\Ho_1\). 

In the first place, we interpret the value of \(\kappa_1(c)\) for any \(1\)-chain \(c \in \Co_1\), recalling \Cref{prop:chains_submodules_properties} and noting that the vertices in a bi-chromatic subcomplex take only the values \(x\) (for red points) and \(y\) (for blue points). Then:
\begin{itemize}
\item[--] If \(\kappa_1(c)=x\), all \(1\)-simplices appearing in \(c\) have value \(x\). In other words, \(c\) is a red \(1\)-chain.
\item[--] If \(\kappa_1(c)=y\), then \(c\) is a blue \(1\)-chain.
\item[--] If \(\kappa_1(c)=x \wedge y\), then \(c\) is a red-and-blue \(1\)-chain.
\item[--] If \(\kappa_1(c)=1\), then necessarily \(c=0\).
\item[--] No \(1\)-chain \(c\) satisfies \(\kappa_1(c)=x \vee y\) or \(\kappa_1(c)=0\).
\end{itemize}
The same conclusions hold analogously for any \(\kappa_d\). We now interpret the value \(\eta_1([h])\) for any \(1\)-homology class \([h]\in \Ho_1\). By \Cref{def:eta}, we have \(\eta_1([h])=\bigvee \{\kappa_1(c) \mid c \in [h]\}\). Thus:
\begin{itemize}
\item[--] If \(\eta_1([h])=x \wedge y\), then \([h]\) can be represented by a red-and-blue \(1\)-chain.
\item[--] If \(\eta_1([h])=x\), then \([h]\) can be represented by a red \(1\)-chain.
\item[--] If \(\eta_1([h])=y\), then \([h]\) can be represented by a blue \(1\)-chain.
\item[--] If \(\eta_1([h])=x \vee y\), then \([h]\) can be represented independently by a red \(1\)-chain and a blue \(1\)-chain.
\item[--] If \(\eta_1([h])=1\), then \([h]=[0]\).
\item[--] No \(1\)-homology class \([h]\) satisfies \(\eta_1([h])=0\).
\end{itemize}
The same conclusions hold for any \(\eta_d\). We are now ready to compute \(\eta_1([f])\) by applying \cref{item:hdl_eta} of \Cref{thm:hdl_properties}, and to interpret the resulting value. Note that 
\[
L(\delta_1) \setminus \{0\} = \{x\wedge y, x\} \quad \text{and} \quad L(\kappa_1) = \left\{\bigwedge S \middle | S \subseteq L(\delta_1)\setminus \{0\}\right\}=  \{x\wedge y, x, 1\}.
\]
For each \(\ell \in L(\kappa_1)\), we check whether the system \(S(f,I(\ell))\) has any solution or not. As discussed before, the block \(U_1\) only contains one column referring to the \(1\)-boundary \(u^{1}_{1} = \sigma^{1}_{3}- \sigma^{1}_{4}+ \sigma^{1}_{5}\) and the block \(T_1\) is empty. Then, the complete system \(S(f,\{1,\dots,n_1\})\) is:
\[
\left(\begin{array}{r} 0 \\ 0 \\ 1 \\-1 \\ 1\end{array}\right)
\cdot \left(\upsilon_1\right)=
\left(\begin{array}{r}-1 \\ 1 \\ 0 \\-1 \\ 0\end{array}\right).
\]
Applying \Cref{lem:threshold_system}, the existence of a solution \(\upsilon_1 \in \bbz\) to the system \(S(f,I(\ell))\) ensures that the \(1\)-cycle \(z = f + \upsilon_1 \, u^{1}_{1} \in [f]\) satisfies \(\kappa_1(z)\geq \ell\).
The index set \(I(\ell)\) contains \(i\) if and only if \(\mu(\sigma^{1}_{i})\not \geq \ell\). Considering that
\[
\mu(\sigma^1_1)=x, \quad \mu(\sigma^1_2)=x, \quad \mu(\sigma^1_3)=x \wedge y, \quad \mu(\sigma^1_4)=x, \quad \mu(\sigma^1_5)=x \wedge y,
\]
we have: 
\begin{itemize}
\item \(I(x \wedge y) = \emptyset\) because \(\nu(\sigma^{1}_{i})\geq x \wedge y\) for all \(i=1,2,3,4,5\).
\item \(I(x) = \{3,5\}\) because \(\nu(\sigma^{1}_{i}) = x \wedge y \not\geq x \) for \(i=3,5\) and \(\nu(\sigma^{1}_{i})=x\) for \(i=1,2,4\).
\item \(I(1) = \{1,2,3,4,5\}\) because \(\nu(\sigma^{1}_{i}) \not\geq 1 \) for \(i=1,2,3,4,5\).
\end{itemize}
Then, for each \(\ell \in L(\kappa_1)\):
\begin{itemize}
\item \(S(f,I(x \wedge y))\) is an empty system that is trivially solved by any \(\upsilon_1 \in \bbz\).
\item \(S(f,I(x))\) is a system with only the third and fifth rows of \((U_1|T_1A_1)\) and \(-f^{\Delta}\):
\[
\left(\begin{array}{r}1 \\1\end{array}\right)
\cdot \left(\upsilon_1\right) =
\left(\begin{array}{r}0 \\0\end{array}\right),
\]
whose only solution is \(\upsilon_1=0\).
\item \(S(f,I(1))\) is the complete system 
\[
\left(\begin{array}{r} 0 \\ 0 \\ 1 \\-1 \\  1\end{array}\right)
\cdot \left(\upsilon_1\right)=
\left(\begin{array}{r}-1 \\ 1 \\ 0 \\-1 \\ 0\end{array}\right),
\]
which is unsolvable because the two first equations are unsolvable too.
\end{itemize}
Then, by \cref{item:hdl_eta} of \Cref{thm:hdl_properties}, we obtain \(\eta_1([f]) = \bigvee \{x \wedge y, x\} = x\), meaning that \([f]\) can be represented by a red chain. In this case, that red chain is \(f\), which goes through the red vertices \(v_0\), \(v_1\), and \(v_3\).

\paragraph{3. Describing the cuts family}
Now, we describe the \(L\)-fuzzy submodule \(\eta_0 \in \calfm(\Ho_0,L)\) via its cuts \(\eta_0^{\geq \ell}\). 

In the first place, we interpret the \(\Ho_0(\ell)\) submodules. If \([h] \in \Ho_0(\ell)\), then the system \(S(h,I(\ell))\) is solvable and, by \Cref{lem:threshold_system}, that implies that there exists a \(0\)-cycle \(z \in [h]\) such that \(\kappa_0(z) \geq \ell\). Thus:
\begin{itemize}
\item[--] If \([h] \in \Ho_0(1)=\Ho_0(x \vee y)\), then \([h]\) contains the zero \(0\)-chain and \([h]=[0]\).
\item[--] If \([h] \in \Ho_0(x)\), then either \([h]=[0]\) or \([h]\) can be represented by a red \(0\)-chain.
\item[--] If \([h] \in \Ho_0(y)\), then either \([h]=[0]\) or \([h]\) can be represented by a blue \(0\)-chain.
\item[--] Since \(\kappa_0(c) \geq x \wedge y\) for any \(c \in \Co_0\), then \(\Ho_0(x \wedge y)= \Ho_0\).
\end{itemize}
We are now ready to compute the \(\Ho_0(\ell)\) submodules for all \(\ell \in L(\kappa_0)\). To compute them, note that 
\[
L(\delta_0) \setminus \{0\} = \{x,y\} \quad \text{and} \quad L(\kappa_0) = \left\{\bigwedge S \middle | S \subseteq L(\delta_0)\setminus \{0\}\right\}=  \{x\wedge y, x, y, 1\}.
\]
By \Cref{prop:hdl_projection}, each \(\Ho_0(\ell)\) is computed by finding the kernel of the matrix \(G_{0,I(\ell)}\) and projecting it to \(\Ho_0\). As discussed before, the block \(R_0\) is empty. That means that the matrix \(G_0=(U_0|T_0|F_0)\) coincides exactly with \(\mdh_0\). Then, the complete system \(G_0 \xi = 0\) is
\[
\left(
\begin{array}{rrrrr}
 1 &  0 &  0 &  0 &  0 \\ 
 0 &  1 &  0 &  0 &  0 \\ 
 0 &  0 &  1 &  0 &  0 \\ 
-1 & -1 & -1 &  1 &  0 \\ 
 0 &  0 &  0 &  0 &  1
\end{array}
\right) \cdot
\left(\begin{array}{r} \upsilon_1  \\  \upsilon_2  \\  \upsilon_3  \\  \varphi_1  \\  \varphi_2 \end{array}\right) =
\left(\begin{array}{r}0 \\ 0 \\ 0 \\ 0  \\ 0 \end{array}\right).
\]
The index set \(I(\ell)\) contains \(i\) if and only if \(\mu(\sigma^{0}_{i})\not \geq \ell\). Considering that
\[
\mu(\sigma^0_1)=x, \quad \mu(\sigma^0_2)=x, \quad \mu(\sigma^0_3)=y, \quad \mu(\sigma^0_4)=x, \quad \mu(\sigma^0_5)=y,
\]
we have:
\begin{itemize}
\item \(I(x \wedge y) = \emptyset\) because \(\mu(\sigma^{0}_{i})\geq x \wedge y\) for all \(i=1,2,3,4,5\).
\item \(I(x) = \{3,5\}\) because \(\mu(\sigma^{0}_{i}) =y \not \geq x\) for \(i=3,5\) and \(\mu(\sigma^{0}_{i})\geq x\) for \(i=1,2,4\).
\item \(I(y) = \{1,2,4\}\) because \(\mu(\sigma^{0}_{i}) =x \not \geq y\) for \(i=1,2,4\) and \(\mu(\sigma^{0}_{i})\geq y\) for \(i=3,5\).
\item \(I(1) = \{1,2,3,4,5\}\) because \(\mu(\sigma^{0}_{i})\not \geq 1\) for \(i=1,2,3,4,5\).
\end{itemize}
Let us now examine the restricted systems for each \(\ell \in L(\kappa_0)\). 
Following the proof of \Cref{prop:hdl_projection}, for any vector \(\xi=\left(
\begin{array}{c}
\upsilon \\ \varphi 
\end{array}\right) 
\in \ker G_{0,I(\ell)}\), there is a \(0\)-cycle \(z \in \Zo_0\) with \(z \approx (\upsilon,0,\varphi)\) such that \([z] \in \Ho_0(\ell)\) and 
\([z] = \left(
\begin{array}{c}
0 \\ \varphi 
\end{array}
\right)\).
\begin{itemize}
\item \(G_{0,I(x \wedge y)}\xi = 0\) is an empty system that is trivially solved by any \(\xi \in \bbz^5\). Then, \(\Ho_0(x \wedge y) = \Ho_0 \cong \bbz^2\).
\item \(G_{0,I(x)}\xi = 0\) is a system with the third and fifth rows of \(G_{0}\):
\[
\left(
\begin{array}{rrrrr}
 0 &  0 &  1 &  0 &  0 \\
 0 &  0 &  0 &  0 &  1 \\
\end{array}
\right) \cdot 
\left(\begin{array}{r}\upsilon_1 \\\upsilon_2 \\\upsilon_3 \\\varphi_1 \\\varphi_2\end{array}\right)=
\left(\begin{array}{r}0 \\0\end{array}\right)
\]
The kernel of \(G_{0,I(x)}\) is generated by the vectors \(\{(1,0,0,0,0)',(0,1,0,0,0)',(0,0,0,1,0)'\}\). Projecting them to their last two components, we have that \(\Ho_0(x)=\langle [f^0_1]\rangle \cong \bbz\)
\item \(G_{0,I(y)}\xi = 0\) is a system with only the first, second and fourth rows of \(G_0\):
\[
\left(
\begin{array}{rrrrr}
 1 &  0 &  0 &  0 &  0 \\ 
 0 &  1 &  0 &  0 &  0 \\ 
-1 & -1 & -1 &  1 &  0 
\end{array}
\right) \cdot 
\left(\begin{array}{r} \upsilon_1  \\  \upsilon_2  \\  \upsilon_3  \\ \varphi_1  \\  \varphi_2 \end{array}\right)=
\left(\begin{array}{r}0 \\0  \\ 0 \end{array}\right)\,.
\]
The kernel of \(G_{0,I(y)}\) is generated by the vectors \(\{(0,0,1,1,0)',(0,0,0,0,1)'\}\). Projecting them to their last two components, we have that \(\Ho_0(x)=\langle [f^0_1], [f^0_2] \rangle = \Ho_0 \cong \bbz^2\).

\item \(G_{0,I(1)}\xi = 0\) is the complete system, only solvable by \(\xi = 0\) because \(G_0=M^{\Delta,H}_0\) is invertible. Thus, \(\Ho_0(1)=\{0\}\).
\end{itemize}

Now we describe the cuts of \(\eta_0\). By applying \cref{item:hdl_cut} of \Cref{thm:hdl_properties}, the cut \(\eta_0^{\geq \ell}\) is obtained by considering all subsets \(S \subseteq L(\kappa_0) = \{x \wedge y, x, y, 1\}\) such that \(\bigvee S \geq \ell\) (note that some of these subsets may be redundant). Thus:
\begin{itemize}
\item[--] For \(\ell=x \wedge y\), it suffices to consider \(S=\{x \wedge y\}\), so \(\eta_0^{\geq x \wedge y}=\Ho_0(x \wedge y)=\Ho_0 \cong \bbz^2\).
\item[--] For \(\ell=x\), it suffices to consider \(S=\{x\}\), so \(\eta_0^{\geq x}=\Ho_0(x)=\langle [f^0_1]\rangle \cong \bbz\).
\item[--] For \(\ell=y\), it suffices to consider \(S=\{y\}\), so \(\eta_0^{\geq y}=\Ho_0(y)=\Ho_0 \cong \bbz^2\).
\item[--] For \(\ell=x \vee y\), we consider \(S=\{x,y\}\), so \(\eta_0^{\geq x \vee y} = \Ho_0(x) \cap \Ho_0(y) = \Ho_0(x) = \langle [f^0_1]\rangle \cong \bbz\).
\item[--] For \(\ell=1\), the only possible subset is \(S=\{1\}\), so \(\eta_0^{\geq 1} = \Ho_0(1)=\{0\}\).
\end{itemize}
Knowing all the cuts, we can give an explicit description for \(\eta_0 \in \calfm(\Ho_0,L)\):
\[
\eta_0(\varphi_1[f^0_1]+\varphi_2[f^0_2])=
\begin{cases}
1 & \text{ if } \varphi_1=\varphi_2 = 0, \\
x \vee y & \text{ if } \varphi_2 = 0 \text{ and } \varphi_1 \neq 0, \\
y & \text{ otherwise.}
\end{cases}
\]
In particular, \(\eta_0([f^0_1])=x \vee y\) because the connected component of \(v_3\) contains points of both colors and \(\eta_0([f^0_2])=y\) because the connected component of \(v_4\) only contains a blue point.

\begin{remark}
\label{rem:counterexample1}
In \cref{item:hdl_contain} of \Cref{thm:hdl_properties} we saw that \(\Ho_0(\ell) \subset \eta_0^{\geq \ell}\) for any \(\ell \in L\). This inclusion is strict for \(\ell = x \vee y\). Indeed, \(\Ho_0(x \vee y) = \{0\}\) and \(\eta_0^{\geq x\vee y}=\langle [f^0_1]\rangle \cong \bbz\). This occurs because \([f^0_1]\) does not contain any \(0\)-chain with value above \(x \vee y\), but it contains a \(0\)-chain with value \(x\) (a red point) and a \(0\)-chain with value \(y\) (a blue point).
\end{remark}

\begin{remark}
\label{rem:counterexample2}
In \cref{item:hdl_join} of \Cref{thm:hdl_properties} we also saw that \(H_0(\bigvee S) \subset \bigcap_{\ell \in S} \Ho_0(\ell)\) for any subset \(S \subseteq L\). 
This inclusion is strict for \(S=\{x, y\}\). Indeed, \(\Ho_0(x \vee y) =\{0\}\) and \(\Ho_0(x) \cap \Ho_0(y) = \Ho_0(x)= \langle [f^0_1]\rangle\cong \bbz\).
\end{remark}


\section{Conclusions and future work}
\label{sec:conclusions}

In summary, we have defined $L$-fuzzy simplicial homology in an analogous way to
the classical definition of simplicial homology, replacing the usual crisp notions—such as simplicial complexes, modules, images and preimages under homomorphisms, and quotient modules—by their \(L\)-fuzzy counterparts. As a result, we obtain an \(L\)-fuzzy submodule \(\eta_d \in \calfm(\Ho_d,L)\), that is, a map assigning to each homology class \([h] \in \Ho_d\) a value \(\eta_d([h]) \in L\). Furthermore we have shown, both theoretically and through an explicit example, that the values \(\eta_d([h])\) and the cuts of \(\eta_d\), can be effectively computed using techniques based solely on matrix transformations and the solution of linear Diophantine systems.

A natural continuation of this work is to extend \Cref{def:eta} of $L$-fuzzy simplicial homology to relative pairs of $L$-fuzzy simplicial complexes. 
In particular, since we presented a new homology theory, it is worth showing that it satisfies the Eilenberg-Steenrod axioms~\cite{EilenbergSteenrod1952}. 
For this purpose, the simplification of the axioms~\cite{Dawson1988} is particularly useful, since it is formulated in the language of simplicial complexes, avoiding technical difficulties regarding topological spaces and homotopies. 
Here notice that the dimension axiom should be reformulated in terms of an $L$-fuzzy module.
In an analogous way to \cite{EilenbergSteenrod1952,Dawson1988}, the ultimate aim would be to show that all homology theories on $L$-fuzzy simplicial complexes are equivalent.

Another natural direction for future work is the extension of this theory to persistent homology. In practice, TDA researchers do not construct a single simplicial complex from a point cloud, 
but rather a filtration, that is, an increasing sequence of simplicial complexes (recall \Cref{def:filtration}). In these constructions, the inclusion of a potential simplex \(\sigma=\langle x_0, \dots, x_n \rangle\) in each complex of the filtration depends on a certain parameter such as proximity or density. Classical examples of this include the \v{C}ech \cite{carlsson2009topology}, Vietoris-Rips \cite{zomorodian2010fast} and Alpha filtrations \cite{giesen2006conformal}. From this perspective, we wonder which invariants can be defined for filtrations of \(L\)-fuzzy subcomplexes. In particular, we aim to develop a notion of \(L\)-fuzzy persistent homology, capable of capturing the evolution of \(L\)-fuzzy features across scales.

\appendix

\section{List of notations}
\label{sec:listofnotations}

\begin{longtable}{l|l}
\toprule
\textbf{Notation} & \textbf{Description} \\
\midrule
\endfirsthead
\toprule
\textbf{Notation} & \textbf{Description} \\
\midrule
\endhead
\midrule
\multicolumn{2}{r}{Continued on next page} \\
\midrule
\endfoot
\bottomrule
\endlastfoot
\((P,\leq)\) & Partially ordered set (poset) \\
\(\bigvee, \; \bigwedge\) & Join and meet \\
\((L,\leq)\) & Completely distributive lattice (CDL) \\
\(\fdl(x_1,\dots,x_n)\) & Free distributive lattice generated by \(x_1,\dots,x_n\) \\
\(P^{p(\ell)}\) & Filter, subset of \(P\) that satisfies the property \(p(\ell)\) \\
\(\calp(X,L)\) & Power ser of \(X\), or the set of crisp subsets of \(X\) \\
\(\calfp(X,L)\) & \(L\)-fuzzy power set of \(X\), or the set of \(L\)-fuzzy subsets of \(X\) \\
\(L(\mu)\) & Image of \(\mu: X \to L\) \\
\(S_{\ell}\) & \(L\)-fuzzy subset with constant value \(\ell\) in \(S \subset X\) and \(0\) in \(X \setminus S\) \\
\(\mathcal{D} = (X,f)\) & Chromatic dataset, with \(f: X \to C = \{c_1,\dots,c_k\}\) \\
\(\mu^{p(\ell)}\) & Filter, subset of \(X\) such that \(\mu^{-1}(\mu^{p(\ell)})=P^{p(\ell)}\) \\
\(\mu^*=\mu^{>0}\) & Support of \(\mu\) \\
\(\mu_*=\mu^{\geq 1}\) & Core of \(\mu\) \\
\(\cut(\mu): L \to \calp(X)\) & Contravariant functor such that \(\ell \mapsto \mu^{\geq \ell}\) \\
\(\bba\) & Commutative ring with \(1\) \\
\(\bbd\) & Principal Ideal Domain (PID) \\
\(\bbf\) & Field \\
\(\Mo, \No\) & \(\bba\)-modules \\
\(\calfm(\Mo,L)\) & Set of \(L\)-fuzzy submodules of \(\Mo\) \\
\((\sub(\Mo),\subset)\) & Category of crisp submodules of \(\Mo\) with morphisms restricted to inclusions \\
\(\langle S \rangle\) & Crisp submodule generated by the subset \(S\) \\
\(\langle \mu \rangle\) & \(L\)-fuzzy submodule generated by the \(L\)-fuzzy subset \(\mu\) \\
\(\Mo \cong \No\) & Isomorphic modules \\
\(\mu \cong \nu\) & Isomorphic \(L\)-fuzzy submodules \\
\(\Mo/\No\) & Quotient of modules \\
\([m]=m + N\) & Coset or class of \(m\) in the quotient module \(\Mo / \No\) \\
\(\mu/\nu\) & Quotient of \(L\)-fuzzy submodules \\
\(\rank\!\cut(\mu): L \to \bbz_{\geq 0}\) & Contravariant functor such that \(\ell \mapsto \rank(\mu^{\geq \ell})\) \\
\(\sigma = \langle v_0,\dots,v_d \rangle\) & \(d\)-simplex generated by the vertices \(v_0,\dots,v_d\) \\
\(\Delta\) & Simplicial complex \\
\(\Delta_d\) & Set of \(d\)-simplices of \(\Delta\) \\
\(\calfc(\Delta,L)\) & Set of \(L\)-fuzzy subcomplexes of \(\Delta\) \\
\((\sub(\Delta),\subset)\) & Category of crisp subcomplexes of \(\Delta\) with morphisms restricted to inclusions \\
\((\SpCpx,\subset)\) & Category of finite simplicial complexes with morphisms restricted to inclusions \\
\(\Sigma_F=\bigcup_{p \in P}F(p)\) & Simplicial complex associated to the filtration \(F: P \to \SpCpx\) \\
\(\Sigma_{M} = \bigcup_{\ell} M(\ell) \) & Simplicial complex associated to the decreasing filtration \(M: L \to \SpCpx\) \\
\(P_\uparrow\) & Set of up-sets of the poset \(P\) \\
\([v_0,\dots,v_d]\) & Oriented \(d\)-simplex associated to \(\sigma = \langle v_0,\dots,v_d \rangle\) \\
\(\Co_d\) & \(\bbd\)-module of \(d\)-chains \\
\(E^{\Delta}_d = \{\sigma^{d}_{1},\dots,\sigma^{d}_{n_d}\}\) & Basis of \(\Co_d\) given by the positively oriented \(d\)-simplices of \(\Delta\) \\
\(c^{\Delta}\) & Vector of coefficients of \(c \in \Co_d\) with respect to \(E^{\Delta}_d\) \\
\(\partial_d: \Co_d \to \Co_{d-1}\) & \(d\)-th boundary operator \\
\(\Zo_d = \ker(\partial_d)\) & Submodule of \(d\)-cycles \\
\(\Bo_d = \im(\partial_{d+1})\) & Submodule of \(d\)-boundaries \\
\(\mathcal{C}(\Delta)\) & Chain complex associated to \(\Delta\) \\
\(\Ho_d = \Zo_d / \Bo_d\) & \(d\)-homology \(\bbd\)-module \\
\(M_d \in \bbd^{n_{d-1}\times n_d}\) & Matrix representing \(\partial_d: \Co_d \to \Co_{d-1}\) respect to \(E^{\Delta}_d\) and \(E^{\Delta}_{d-1}\) \\
\(D_d \in \bbd^{n_{d-1}\times n_d}\) & Matrix representing \(\partial_d: \Co_d \to \Co_{d-1}\) respect to \(E^{H}_d\) and \(E^{H}_{d-1}\) \\
\(E^{H}_d\) & Basis of \(\Co_d\) given after reducing \(M_d\) into \(D_d\) \\
\(c^{H}\) & Vector of coefficients of \(c \in \Co_d\) respect to \(E^{H}_d\) \\
\(\mhd_d, \mdh_d\) & Change-of-basis matrices between \(E^{H}_d\) and \(E^{\Delta}_d\) \\
\(u^d_1,\ldots,u^d_{n_U}\) & Generators of \(E^{H}_d\) in group U \\
\(t^d_1,\ldots,t^d_{n_T}\) & Generators of \(E^{H}_d\) in group T \\
\(a^d_1,\ldots,a^d_{n_T}\) & Torsion coefficients of \(\Ho_d\) \\
\(r^d_1,\ldots,r^d_{n_R}\) & Generators of \(E^{H}_d\) in group R \\
\(f^d_1,\ldots,f^d_{n_F}\) & Generators of \(E^{H}_d\) in group F \\
\(U_d, T_d, R_d, F_d\) & Submatrix of \(\mdh_d\) corresponding to generators of \(E^{H}_d\) in groups U, T, R, F  \\
\(A_d = \operatorname{diag}(a^d_1,\dots,a^d_{n_T})\) & Diagonal matrix with the torsion coefficients of \(\Ho_d\) \\
\([h]=\left(\begin{array}{c} \alpha\\ \varphi \end{array} \right)\) & Compact coordinates of \([h] \in \Ho_d\) \\
\(c \approx (\upsilon_c,\tau_c,\rho_c,\varphi_c)\) & Compact coordinates of \(c \in \Co\) respect to \(E^{H}_d\) \\
\(z \approx (\upsilon_z,\tau_z,\varphi_z)\) & Compact coordinates of \(z \in \Zo\) respect to \(E^{H}_d\) \\
\(b \approx (\upsilon_b,\tau_b)\) & Compact coordinates of \(b \in \Bo\) respect to \(E^{H}_d\) \\
\(z \approx h + (\upsilon_b,\tau_b)\) & Compact coordinates of \(z \in [h]\) if \(z = h+b\) for some \(b \in \Bo\) \\
\(\pi^{d}_i: \bbd \to \bbd/(a^d_i)\) & Natural projection associated to the torsion coefficient \(a^d_i\) \\
\([z]=\left(\begin{array}{c} \pi^d(\tau_z)\\ \varphi_z \end{array} \right)\) & Compact coordinates of \([z] \in \Ho_d\) if \(z \approx (\upsilon_z,\tau_z,\varphi_z)\) \\
\(\delta_d \in \calfp(\Co_d,L)\) & \(L\)-fuzzy subset of \(d\)-simplices \\
\(\kappa_d \in \calfm(\Co_d,L)\) & \(L\)-fuzzy submodule of \(d\)-chains \\
\(\zeta_d \in \calfm(\Co_d,L)\) & \(L\)-fuzzy submodule of \(d\)-cycles \\
\(\beta_d \in \calfm(\Co_d,L)\) & \(L\)-fuzzy submodule of \(d\)-boundaries \\
\(\eta_d = \zeta_d / \beta_d\) & \(L\)-fuzzy \(d\)-homology \\
\(M_I\) & Submatrix of \(M\) with the rows indexed by \(I\) \\
\(S(c,I)\) & System of linear diophantine equations given by \((U_d \mid T_d A_d)_I \left(\begin{array}{c} \upsilon\\ \tau \end{array} \right) = -c^{\Delta}_I\) \\
\(I(\ell) = \big\{i \mid \sigma^{d}_{i} \in \mu^{\not\geq \ell}\big\}\) & Set of indices associated to \(\ell \in L\). \\
\(\Ho_d(\ell)\) & Crisp submodule of \(\Ho_d\) given by \(\big\{[h] \in \Ho_d \mid S(h,I(\ell)) \text{ is solvable} \big\}\) \\
\(\solv(\eta_d): L \to \sub(\Ho_d)\) & Contravariant functor given by \(\ell \mapsto \Ho_d(\ell)\) \\
\(G_d\) & Submatrix of \(\mdh_d\) given by \((U_d \mid T_d \mid F_d)\) \\
\end{longtable}



\section*{Acknowledgments}

\noindent Javier Perera-Lago was funded by a \textit{Junta de Andalucía} predoctoral grant with reference:
``DGP\_PRED\_ 2024\_02465''.

\bibliographystyle{plain}
\bibliography{bibliography}







\end{document}